\documentclass[reqno,11pt,letterpaper]{amsart}

\usepackage[T1]{fontenc}
\usepackage[utf8]{inputenc}
\usepackage{amsmath}
\usepackage{amsthm}
\usepackage{mathtools}
\usepackage{amssymb}
\usepackage{mathrsfs}
\usepackage{bm}
\usepackage{verbatim}
\usepackage{enumerate}
\usepackage[top=1.5in,bottom=1.5in,left=0.7in,right=0.7in]{geometry}
\usepackage[colorlinks]{hyperref}
\hypersetup{
    colorlinks,
    citecolor=blue,
    linkcolor=red
}
\usepackage{xypic}
\usepackage{tikz}
\usetikzlibrary{decorations.pathreplacing}
\usepackage{xcolor}
\usepackage{float}
\usepackage{cleveref}
\usepackage{comment}
\usepackage[abs]{overpic}
\usepackage{aliascnt}

\numberwithin{equation}{section}
\newtheorem{theorem}{Theorem}[section]
\newtheorem*{theorem*}{Theorem}
\newtheorem{corollary}[theorem]{Corollary}
\newtheorem{proposition}[theorem]{Proposition}
\newtheorem{lemma}[theorem]{Lemma}
\newtheorem{question}[theorem]{Question}

\newtheorem*{claim*}{Claim}

\newtheorem{remark}[theorem]{Remark}

\newtheorem{example}[theorem]{Example}
\newtheorem{conjecture}[theorem]{Conjecture}
\newtheorem{definition}[theorem]{Definition}

\newcommand{\R}{\mathbb R}
\newcommand{\Z}{\mathbb Z}
\newcommand{\Q}{\mathbb Q}
\newcommand{\C}{\mathbb C}
\newcommand{\D}{\mathbb D}
\newcommand{\B}{\mathbb B}
\newcommand{\bbS}{\mathbb S}
\newcommand{\CP}{\mathbb C\mathbb P}
\newcommand{\Algebra}{\mathfrak{A}}

\newcommand{\cA}{\mathcal A}

\newcommand{\cM}{\mathcal M}

\newcommand{\rd}{\mathrm d}
\newcommand{\std}{\mathrm{std}}

\DeclareMathOperator{\Ima}{im}
\DeclareMathOperator{\virdim}{virdim}
\DeclareMathOperator{\CZ}{CZ}
\DeclareMathOperator{\SFT}{SFT}
\DeclareMathOperator{\CO}{CO}
\DeclareMathOperator{\CH}{CH}
\DeclareMathOperator{\CC}{CC}
\DeclareMathOperator{\LCH}{LCH}
\DeclareMathOperator{\tb}{tb}
\DeclareMathOperator{\sign}{sign}

\newcommand{\Addresses}{{%
\bigskip
\footnotesize

Russell Avdek,\par\nopagebreak
\textsc{Sorbonne University, Paris Cité University, CNRS, IMJ-PRG, F-75005 Paris, France}\par\nopagebreak
\textit{E-mail address}: \href{mailto:russell.avdek@imj-prg.fr}{russell.avdek@imj-prg.fr}

\medskip

Youlin Li,\par\nopagebreak
\textsc{School of Mathematical Sciences, Shanghai Jiao Tong University, Shanghai, China}\par\nopagebreak
\textit{E-mail address}: \href{mailto:liyoulin@sjtu.edu.cn}{liyoulin@sjtu.edu.cn}

\medskip

Zhengyi Zhou,\par\nopagebreak
\textsc{State Key Laboratory of Mathematical Sciences, Chinese Academy of Sciences;}\par\nopagebreak
\textsc{Morningside Center of Mathematics, Chinese Academy of Sciences;}\par\nopagebreak
\textsc{Academy of Mathematics and Systems Science, Chinese Academy of Sciences, China}\par\nopagebreak
\textit{E-mail address}: \href{mailto:zhyzhou@amss.ac.cn}{zhyzhou@amss.ac.cn}

}}

\title{Algebraic overtwistedness of contact $(+1)$-surgeries via augmented Floer theories}

\author{Russell Avdek}
\author{Youlin Li}
\author{Zhengyi Zhou}

\begin{document}

\maketitle

\begin{abstract}
We prove that contact $(+1)$-surgery along a Legendrian $n$-sphere
\(\Lambda\subset(\bbS^{2n+1},\xi_{\std})\) with
\[
\tb(\Lambda)\ne (-1)^{\frac{n(n-1)}{2}+1}
\]
has vanishing contact homology. In dimension three, this produces new examples of tight algebraically overtwisted contact manifolds, vastly extending the authors' previous results: If a Legendrian knot in \((\bbS^3,\xi_{\std})\) admits an exact, oriented Lagrangian filling that is not a disk, then contact $(+1/k)$-surgery on it is algebraically overtwisted and tight for every positive integer \(k\). Our proofs use versions of symplectic cohomology and wrapped Floer cohomology defined by augmentations of a chain-level contact homology algebra.
\end{abstract}

\setcounter{tocdepth}{1}
\tableofcontents

\section{Introduction}

Contact homology is a powerful invariant of contact manifolds defined within the framework of symplectic field theory. It was first introduced by Eliashberg, Givental, and Hofer \cite{EliGivHof}, and given rigorous foundations by Pardon \cite{Pardon:Contact} and Bao--Honda \cite{Bao-Honda}. This invariant has proved useful in distinguishing contact structures. For example, Yau \cite{Yau} and Bourgeois--van Koert \cite{BvK} showed that the contact homology of an overtwisted contact manifold vanishes; see also \cite{Z+1}. Consequently, non-vanishing contact homology provides an effective criterion for tightness in
applications \cite{BGMZ,AZ}.

A fundamental method for constructing contact manifolds is contact surgery. Ding--Geiges \cite{DingGeiges:consur} proved that every closed contact $3$-manifold can be obtained from the standard contact sphere \((\bbS^3,\xi_{\std})\) by contact $(\pm 1)$-surgeries on a Legendrian link. This result was later extended by Conway--Etnyre \cite{CE20}, who showed that any closed contact manifold (of any odd dimension) can be constructed as the result of surgeries along isotropic and coisotropic spheres in the standard contact sphere.

The interaction between contact surgery and contact homology reveals several interesting phenomena. \emph{Legendrian surgery}, that is, contact $(-1)$-surgery along a Legendrian sphere in a standard contact sphere, always produces a Stein fillable contact manifold and hence one with non-vanishing contact homology. On the other hand, contact $(+1)$-surgery along a Legendrian sphere may produce a contact manifold with vanishing contact homology. For instance, work of Ozbagci \cite{Ozbagci} and of Casals--Murphy--Presas \cite{CMP} shows that performing contact $(+1)$-surgery on a stabilized Legendrian knot in dimension \(3\), or on a loose Legendrian in dimensions at least \(5\), always yields an overtwisted contact manifold. Therefore the result of such a $(+1)$-surgery must have vanishing contact homology and cannot admit a symplectic filling \cite{Gromov:JCurves, Niederkruger:Plastik}.

Motivated by these vanishing results for overtwisted manifolds, Bourgeois--Niederkr\"uger \cite{BN} defined contact manifolds with vanishing contact homology to be \emph{algebraically overtwisted}. It is natural to ask whether every algebraically overtwisted contact manifold is overtwisted. The first examples of closed contact manifolds that are tight but algebraically overtwisted were found by the first author in \cite{avdek2020combinatorial}; these examples arise from contact $(+1/k)$-surgery on the Legendrian right-handed trefoil in \((\bbS^3,\xi_{\std})\), for positive integers \(k\). This family was subsequently generalized by the second and third authors to surgeries on Legendrian positive torus knots \cite{Li-Zhou}.

In this paper we study the contact homology of contact manifolds obtained by contact $(+1)$-surgery on Legendrian spheres in standard contact spheres of any odd dimension. We take a different approach from \cite{avdek2020combinatorial}. Let \(\Lambda\) be a Legendrian \(n\)-sphere in \((\bbS^{2n+1},\xi_{\std})\), and denote by \((\bbS^{2n+1}_{+1}(\Lambda),\xi_{+1}(\Lambda))\) the result of contact $(+1)$-surgery along \(\Lambda\). After identifying the complement of a point in \((\bbS^{2n+1}, \xi_{\std})\) with \((\R^{2n+1},\xi_{\std})\), the Thurston--Bennequin invariant \(\tb(\Lambda)\) can be defined as the signed count of self-intersections in the Lagrangian projection of \(\Lambda\) \cite[Appendix A]{Murphy:LooseLegendrian}.

Our main theorem shows that \((\bbS^{2n+1}_{+1}(\Lambda),\xi_{+1}(\Lambda))\) is algebraically overtwisted for a broad class of Legendrian spheres.

\begin{theorem}\label{thm:vanishingCH}
Let \(\Lambda\) be a Legendrian sphere in
\((\bbS^{2n+1},\xi_{\std})\), where \(n\ge 1\). If
\[
\tb(\Lambda)\ne (-1)^{\frac{n(n-1)}{2}+1},
\]
then \((\bbS^{2n+1}_{+1}(\Lambda),\xi_{+1}(\Lambda))\) is algebraically overtwisted.
\end{theorem}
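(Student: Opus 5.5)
The strategy is to show that the unit of the contact homology algebra of $(\bbS^{2n+1}_{+1}(\Lambda),\xi_{+1}(\Lambda))$ is exact. Equivalently, we show that the chain-level algebra admits no augmentation in any coefficient ring where it should. We argue by contradiction.

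Suppose $\CH(\bbS^{2n+1}_{+1}(\Lambda))\neq 0$. Then, after passing to a suitable coefficient extension as in Pardon's framework, the chain-level contact homology algebra $\CC$ admits an augmentation $\epsilon$. Using $\epsilon$, we build augmented Floer theories for the surgery cobordism. Contact $(+1)$-surgery is the inverse of Legendrian surgery. Hence $\bbS^{2n+1}$ is obtained from $\bbS^{2n+1}_{+1}(\Lambda)$ by Legendrian surgery along the dual Legendrian sphere $\Lambda^*$, giving a Weinstein $(2n+2)$-dimensional handle cobordism $W$ with negative end $\bbS^{2n+1}_{+1}(\Lambda)$ and positive end $\bbS^{2n+1}$.

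We define an $\epsilon$-augmented symplectic cohomology $SH^*_\epsilon$ for $W$ capped below by the augmentation; this plays the role of a filling. We also define an $\epsilon$-augmented wrapped Floer cohomology $WH^*_\epsilon(C)$ of the cocore disk $C$ of the handle. Both are unital, and there is a closed–open map $\CO\colon SH^*_\epsilon\to WH^*_\epsilon(C)$ respecting units.

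The key steps, in order, are as follows.

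\begin{enumerate}[(1)]
\item \textbf{Vanishing of $SH^*_\epsilon$.} The positive end of $W$ is the standard sphere $\bbS^{2n+1}$. Its Reeb dynamics (after perturbation, the Morse--Bott boundary of the ball) make the unit of $SH^*_\epsilon$ die through the same mechanism by which $SH^*(\B^{2n+2})=0$. We would prove this by neck-stretching along the negative end, using $\epsilon$ to cap off negative punctures. An action/index filtration argument then reduces to the ball computation. Consequently $WH^*_\epsilon(C)=0$ as well, since it is a unital module over $SH^*_\epsilon$ via $\CO$.

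\item \textbf{A non-vanishing computation of $WH^*_\epsilon(C)$.} We compute $WH^*_\epsilon(C)$ via a surgery-type formula (Bourgeois--Ekholm--Eliashberg), relating it to the Legendrian contact homology of $\Lambda^*$, or dually of $\Lambda$, twisted by $\epsilon$. The relevant chain complex is a linearized Chekanov--Eliashberg complex whose Euler characteristic is determined by $\tb(\Lambda)$. For a Legendrian sphere, the linearized Legendrian contact homology satisfies the duality/Euler characteristic relation $\chi = \pm\tb(\Lambda) + (\text{correction from the sphere})$. The precise sign and correction are dictated by orientation conventions, giving the exceptional value $(-1)^{\frac{n(n-1)}{2}+1}$.

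\item \textbf{Contradiction.} The vanishing from step (1) forces $\tb(\Lambda)=(-1)^{\frac{n(n-1)}{2}+1}$. This contradicts the hypothesis of Theorem~\ref{thm:vanishingCH}, so no augmentation exists and the unit of $\CH$ is exact.
\end{enumerate}

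We expect step (2) to be the main obstacle. The difficulty is not the Euler characteristic identity itself but showing that the augmented wrapped Floer complex of the cocore really equals a $\Z/2$-graded complex whose Euler characteristic is computed by $\tb(\Lambda)$ with the correct sign. This requires controlling the grading, since $\Lambda$ may have Maslov class issues in $n=1$; there we would use $\Z/2$ gradings throughout. It also requires controlling orientations, so as to pin down the constant term $(-1)^{\frac{n(n-1)}{2}+1}$, which should arise as the Euler characteristic contribution of the unit/fundamental class of the sphere. A second, subtler point is ensuring that the augmentation $\epsilon$ of the closed-string algebra induces a genuine augmentation-twisted differential on the open-string complex. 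In other words, the curves with negative Reeb orbit punctures at the concave end must be counted consistently. We would handle this by working in Pardon's VFC framework with the $\epsilon$-capped moduli spaces, and by checking $d^2=0$ through the standard boundary analysis, where broken configurations at the negative end cancel because $\epsilon$ is a chain map.
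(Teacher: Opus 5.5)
Your architecture matches the paper's: an augmentation, then augmented $SH$ and $HW$ on the surgery cobordism, then killing the unit of $HW_\epsilon(C)$, then an Euler characteristic contradiction. Your cocore $C$ is the paper's disk $L$ with $\partial L=\Lambda$. But step (2), which you flag as the obstacle, is left open, and the tool you propose would fail. A Bourgeois--Ekholm--Eliashberg surgery formula expresses the wrapped Floer cohomology of the cocore through the full, nonlinear Chekanov--Eliashberg algebra of $\Lambda^*\subset\bbS^{2n+1}_{+1}(\Lambda)$, whose chords are not controlled by $\tb(\Lambda)$. On the other hand, the unfiltered positive complex of $C$ is generated by infinitely many Reeb chords of $\Lambda$ in $\bbS^{2n+1}$, so it has no Euler characteristic.

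The missing idea is an \emph{action window}. With the contact form of \Cref{prop:contact}, the shortest orbit $\gamma_{\min}$ has period $4\pi$ and kills the unit. Every chord of $\Lambda$ has action either $<1$ (exactly the double points of the Lagrangian projection) or $>9\pi-1>4\pi+\delta$. Hence $HW^{*,<4\pi+\delta}(C,\epsilon)=HW^{*,<1}(C,\epsilon)$. Since $\mu_2$ lands in $HW^{*,<2}(C,\epsilon)=HW^{*,<1}(C,\epsilon)$, this filtered group is itself a unital ring, so it vanishes once its unit does. The tautological sequence $H^*(C)\to HW^{*,<1}(C,\epsilon)\to HW^{*,<1}_+(C,\epsilon)\to H^{*+1}(C)$ then gives $\chi(HW^{*,<1}_+(C,\epsilon))=-\chi(C)=-1$. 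The finite complex computing this group has Euler characteristic $(-1)^{\frac{n(n-1)}{2}}\tb(\Lambda)$, by the Ekholm--Etnyre--Sullivan formula and $|c|_{\SFT}=n-|\overline{c}|-1$. So the exceptional constant comes from $H^0$ of the disk, not from a duality correction or a fundamental class of the sphere.

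Two further steps need repair.

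\emph{Step (1).} $SH^*_\epsilon$ of a cobordism is not a ring (see \Cref{rmk:ring}), so neither ``vanishing of $SH^*_\epsilon$'' nor a unital-module argument is available. What one can prove is that $\delta_{\partial_+}\colon SH^{*,<4\pi+\delta}_+(W,\epsilon)\to H^{*+1}(\bbS^{2n+1})$ hits $1$. This is done by stretching the neck along the \emph{positive} end $\bbS^{2n+1}$, not the negative end. Action and index estimates then force the curves emanating from $\check{\gamma}_{\min}$ to have no punctures at $\bbS^{2n+1}_{+1}(\Lambda)$, so they coincide with those of the polydisk filling. Exactness gives a class $1+x\in H^{\mathrm{even}}(W)$, with $x$ of positive degree, that dies in $SH^{*,<4\pi+\delta}(W,\epsilon)$. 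Compatibility of $\CO$ with restriction sends it to $(1+x)|_C=1$, which therefore dies in $HW^{*,<4\pi+\delta}(C,\epsilon)$.

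\emph{The augmentation.} It does not come from Pardon's framework. Non-vanishing homology of a semi-free CDGA does not in general yield an augmentation; this fails for $\Z$-graded ones. You need the $\Z/2\Z$-graded statement over $\C$. On each finite filtration level, the weak Nullstellensatz shows that the absence of an augmentation forces $1$ to be exact. Chevalley's theorem and a Baire-category argument then produce a compatible augmentation of the whole infinitely generated algebra.
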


\subsection{Corollaries of \Cref{thm:vanishingCH}}

As an immediate consequence of \Cref{thm:vanishingCH}, we obtain the following obstruction to symplectic fillability.

\begin{corollary}\label{cor:nofilling1}
If \(\Lambda\subset(\bbS^{2n+1},\xi_{\std})\) is a Legendrian sphere with
\[
\tb(\Lambda)\ne (-1)^{\frac{n(n-1)}{2}+1},
\]
then
\((\bbS^{2n+1}_{+1}(\Lambda),\xi_{+1}(\Lambda))\) admits no weak fillings.
\end{corollary}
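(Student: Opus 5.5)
The corollary should follow directly from \Cref{thm:vanishingCH} together with the standard fact that a weakly fillable contact manifold has non-vanishing contact homology. Let $(Y,\xi)=(\bbS^{2n+1}_{+1}(\Lambda),\xi_{+1}(\Lambda))$ and suppose, for contradiction, that it admits a weak filling $(W,\omega)$. By \Cref{thm:vanishingCH}, under the hypothesis $\tb(\Lambda)\ne(-1)^{\frac{n(n-1)}{2}+1}$, the contact homology of $(Y,\xi)$ vanishes. Concretely, the unit of the contact homology algebra is exact: there is a chain-level element $x$ with $\partial x=1$.

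The filling gives an augmentation, and I will use it as follows. The filling $W$ (completed by attaching the positive symplectization end) defines a unital dg-algebra map
\[
\epsilon_W\colon \CC(Y,\xi)\to\Lambda_\omega,
\]
where $\Lambda_\omega$ is a Novikov-type coefficient ring with trivial differential. The map counts rigid holomorphic planes in $\widehat W$ asymptotic to Reeb orbits. For weak fillings one uses $\omega$-weighted counts with Novikov coefficients (or local systems), since $\omega$ need not be exact near the boundary. One may first deform $\omega$ near $\partial W$ to be of contact type when $\omega|_\xi$ is cohomologically compatible; in general, use the twisted coefficients as in Bourgeois--Niederkr\"uger or the Latschev--Wendl framework. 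With this map in hand, $1=\epsilon_W(1)=\epsilon_W(\partial x)=0$ in $\Lambda_\omega$, which is a contradiction. Hence no weak filling exists.

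The main obstacle is the weak (as opposed to strong or exact) case. I need to know that the vanishing in \Cref{thm:vanishingCH} persists for contact homology with the twisted coefficients required by a weak filling. Alternatively, I need to reduce weak fillings to strong ones. My first approach would be the reduction. For $n\ge2$, $H^2(\bbS^{2n+1}_{+1}(\Lambda);\R)=0$: the surgery modifies $\bbS^{2n+1}$ along an $n$-sphere, changing homology only in degrees $n$ and $n+1$. Therefore $\omega|_\xi$ is exact on $Y$, and a weak filling can be deformed into a strong one by a collar modification (Massot--Niederkr\"uger--Wendl). For $n=1$, weak and strong fillability agree for rational homology spheres. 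In the remaining case $\tb=0$, where $b_1=1$, I would argue instead via twisted coefficients. Since the vanishing $\partial x=1$ produced in the proof of the theorem comes from a local holomorphic-curve count near the surgery region, it should also hold with any Novikov twisting, and the same augmentation argument would then apply.
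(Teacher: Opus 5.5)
Your strategy is the paper's: reduce weak fillings to strong ones using $H^2(\bbS^{2n+1}_{+1}(\Lambda);\R)=0$ and \cite[Proposition 6]{MNW12}, then let the vanishing of contact homology from \Cref{thm:vanishingCH} obstruct strong fillings. You are right to make this reduction rather than rely on the ``standard fact'' in your first sentence, which is not available for untwisted contact homology when $[\omega|_{TY}]\neq 0$. However, your topological case analysis is wrong in two places, and one of them leaves a case unproved as written.

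\textbf{The case $n=2$.} The claim $H^2(\bbS^{2n+1}_{+1}(\Lambda);\R)=0$ fails here. Surgery along an $n$-sphere changes homology in degree $n$, which is $2$ in this case. More generally, for even $n$ one has $\mathrm{lk}(A,B)=-\mathrm{lk}(B,A)$ for disjoint $n$-spheres in $\bbS^{2n+1}$. Hence the linking number of $\Lambda$ with a framing push-off is zero, so $H_n(\bbS^{2n+1}_{+1}(\Lambda))\cong\Z$. For instance, $(+1)$-surgery on the standard Legendrian unknot in $\bbS^5$ is diffeomorphic to $\bbS^2\times\bbS^3$. Your reduction step is therefore unjustified at $n=2$. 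The missing observation, which is exactly how the paper handles this, is that for even $n$ the corollary is vacuous. Indeed, \cite[Proposition 3.2]{EES} gives $\tb(\Lambda)=(-1)^{n/2+1}$, which equals $(-1)^{\frac{n(n-1)}{2}+1}$ for every even $n$. So only odd $n$ matter, and for odd $n\ge 3$ your degree argument is correct.

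\textbf{The case $n=1$.} Contact $(+1)$-surgery is smooth surgery with framing $\tb(\Lambda)+1$. So $b_1=1$ exactly when $\tb(\Lambda)=-1$, which is the excluded case, while $\tb(\Lambda)=0$ gives an integral homology sphere. There is no ``remaining case''. You should drop the twisted-coefficient paragraph, which as written is only a heuristic (``should also hold''), not a proof.

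With these two corrections your argument coincides with the paper's.
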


\begin{proof}
The results are only relevant when $n$ is odd, in which case it is easily checked that $H^{2}(\bbS^{2n+1}_{+1}(\Lambda), \R)=0$. Hence any weak filling of $(\bbS^{2n+1}_{+1}(\Lambda),\xi_{+1}(\Lambda))$ may be deformed to a strong filling \cite[Proposition 6]{MNW12}. According to \cite[Corollary 7]{MNW12} (which generalizes  the $\dim = 3$ case of \cite[Corollary 6]{NW:WeakFillings}), the vanishing of $CH(\bbS^{2n+1}_{+1}(\Lambda),\xi_{+1}(\Lambda), \C)$ (provided by \Cref{thm:vanishingCH}) obstructs the existence of strong symplectic fillings.
\end{proof}

For contact \(3\)-manifolds, \Cref{cor:nofilling1} also follows from the main result of \cite{CET}. In this dimension, we obtain a new family of tight but algebraically overtwisted contact manifolds, substantially extending the examples of \cite{avdek2020combinatorial} and \cite{Li-Zhou}. The tightness of the examples described follows from \cite{Golla,LS}, which guarantee the non-vanishing of contact Ozsv\'ath--Szab\'o invariants.

\begin{corollary}\label{cor:3D}
Let \(\Lambda\) be a Legendrian knot in \((\bbS^{3},\xi_{\std})\). Suppose that
\[
\tb(\Lambda)\neq -1
\qquad\text{and}\qquad
\tb(\Lambda)=2g_s(\Lambda)-1,
\]
where \(g_s(\Lambda)\) denotes the smooth slice genus of the underlying
topological knot type. Then, for every positive integer \(k\), the contact
manifold
\[
(\bbS^{3}_{+1/k}(\Lambda),\xi_{+1/k}(\Lambda))
\]
is algebraically overtwisted and tight. In particular, if \(\Lambda\) bounds an
exact, oriented Lagrangian surface that is not a disk, then
\((\bbS^{3}_{+1/k}(\Lambda),\xi_{+1/k}(\Lambda))\) is algebraically overtwisted
and tight for every positive integer \(k\).
\end{corollary}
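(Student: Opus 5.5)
Since \(n=1\), the exponent \(\frac{n(n-1)}{2}+1\) equals \(1\), so the hypothesis \(\tb(\Lambda)\neq -1\) is exactly the hypothesis of \Cref{thm:vanishingCH}. The plan has three parts: algebraic overtwistedness for \(k=1\), the passage to general \(k\), and tightness.

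For \(k=1\), \Cref{thm:vanishingCH} applies directly. For general \(k\ge 1\), I would use the standard description of contact \((+1/k)\)-surgery (Ding--Geiges): it is contact \((+1)\)-surgery on \(\Lambda\) together with \((+1)\)-surgeries on \(k-1\) Legendrian push-offs of \(\Lambda\). Equivalently, \((\bbS^3_{+1/k}(\Lambda),\xi_{+1/k}(\Lambda))\) is obtained from \((\bbS^3_{+1}(\Lambda),\xi_{+1}(\Lambda))\) by a sequence of contact \((+1)\)-surgeries on Legendrian knots. Contact \((+1)\)-surgery is inverse to Legendrian surgery, so there is an exact symplectic cobordism going the other way: from \(\xi_{+1/k}\) (concave end) to \(\xi_{+1}\) (convex end), built from Weinstein \(2\)-handles. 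Cobordism maps in contact homology are unital DGA maps from the convex end to the concave end. The unit of \(\CH(\xi_{+1})\) is zero, so its image, the unit of \(\CH(\xi_{+1/k})\), is zero, and hence \(\CH(\xi_{+1/k})=0\).

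An alternative route, if the paper's methods allow it, is to rerun the proof of \Cref{thm:vanishingCH} directly for the \((+1/k)\)-surgery diagram. The cobordism argument is cleaner, though, and only needs functoriality from \cite{Pardon:Contact,Bao-Honda}.

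For tightness, the hypothesis \(\tb(\Lambda)=2g_s(\Lambda)-1\) is exactly the criterion of \cite{Golla,LS} for non-vanishing of the contact Ozsv\'ath--Szab\'o invariant of \((+1/k)\)-surgery, for every positive integer \(k\). Non-vanishing of this invariant implies tightness.

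For the last sentence of the corollary, let \(L\) be an exact oriented Lagrangian filling of \(\Lambda\) of genus \(g\). By Chantraine, \(\tb(\Lambda)=2g(L)-1\). Pushing \(L\) into \(\B^4\) gives a smooth slice surface, so \(g_s\le g\). The slice--Bennequin inequality gives \(\tb\le 2g_s-1\). Combining,
\[
\tb=2g-1\le 2g_s-1\le 2g-1,
\]
so both equalities hold. Since \(L\) is not a disk, \(g\ge 1\), and therefore \(\tb=2g-1\ge 1\neq -1\).

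The main obstacle is the passage from \(k=1\) to general \(k\). One has to orient the cobordism correctly, so that the vanishing of \(\CH\) propagates from the convex end to the concave end, and confirm that the needed functoriality for non-filled cobordisms is available in the chosen foundations.
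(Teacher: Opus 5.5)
Your proposal is correct and follows the paper's proof essentially step for step. You use \Cref{thm:vanishingCH} for $k=1$, then the Ding--Geiges push-off description, which yields a Weinstein cobordism with concave end $\xi_{+1/k}(\Lambda)$ and convex end $\xi_{+1}(\Lambda)$, so that vanishing of contact homology propagates to the concave end by functoriality. Tightness comes from \cite{Golla,LS}, and the final sentence follows from Chantraine's formula combined with $g_s\le g$ and the slice--Bennequin inequality. You also make explicit two points the paper leaves implicit: that $(-1)^{\frac{n(n-1)}{2}+1}=-1$ when $n=1$, and the unital-map argument that forces the unit of the concave end to vanish.
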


Bourgeois--Sabloff--Traynor \cite[Theorem 1.3]{BourSabTra} constructed
Lagrangian-fillable Legendrian spheres in \((\bbS^{4n-1},\xi_{\std})\) with
arbitrary odd Thurston--Bennequin invariants. \Cref{thm:vanishingCH}
applies to these Legendrians whenever their Thurston--Bennequin invariants are
not equal to \((-1)^n\). In light of \Cref{cor:3D}, it is natural to
ask the following higher-dimensional question.

\begin{question}
For \(n\geq 2\), does contact $(+1)$-surgery along a Lagrangian-fillable Legendrian sphere
in \((\bbS^{2n+1},\xi_{\std})\) always produce a tight contact manifold?
\end{question}

While \cite{BourSabTra} addresses Legendrian spheres in \((\bbS^{4n-1},\xi_{\std})\), it follows from
\cite[Proposition 3.2]{EES} that for any Legendrian sphere
\(\Lambda\subset(\bbS^{4n+1},\xi_{\std})\), \(n\geq 1\),
\[
\tb(\Lambda)=(-1)^{n+1}.
\]
Thus \Cref{thm:vanishingCH} does not apply to Legendrian spheres in \((\bbS^{4n+1},\xi_{\std})\), \(n\geq 1\). Nevertheless, the strategy of its proof can be adapted to yield consequences closely related to the vanishing of contact homology.

\begin{proposition}\label{thm:vanishingCH1}
Let \((\bbS^{2n+1},\Lambda,\xi_{\std})\), \(n\ge 2\), be an asymptotically
dynamically convex pair in the sense of \cite[Definition 5.2]{Lazarev:flexiblefillings}.
Suppose that \(\Lambda\) admits an exact spin Lagrangian filling
\(L\subset(\D^{2n+2},\omega_{\std})\) such that the Maslov class of \(L\)
vanishes and
\[
H^*(L;\C)\ne \C.
\]
Then, for any choice of auxiliary data, the contact DGA of
\((\bbS^{2n+1}_{+1}(\Lambda),\xi_{+1}(\Lambda))\) admits no
\(\Z\)-graded augmentation over \(\C\).
\end{proposition}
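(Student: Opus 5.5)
Suppose for contradiction that the contact DGA of $(\bbS^{2n+1}_{+1}(\Lambda),\xi_{+1}(\Lambda))$ admits a $\Z$-graded augmentation $\epsilon$ over $\C$. The plan is to use $\epsilon$ to define augmented Floer theories on the surgered manifold, as announced in the abstract, and then compute them in two incompatible ways.

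\textbf{Step 1: a surgery cobordism and a filling of its negative end.} The contact $(+1)$-surgery is the negative end of an exact cobordism $W$ whose positive end is $(\bbS^{2n+1},\xi_{\std})$; one can take $W$ to be the inverse of a Weinstein $(n+1)$-handle attached along $\Lambda$. Equivalently, $\bbS^{2n+1}_{+1}(\Lambda)$ is recovered from the standard sphere by a Legendrian $(-1)$-surgery along a Legendrian $\Lambda'$ in it. The augmentation $\epsilon$ plays the role of a filling of this negative end. Using the ``$\epsilon$-twisted'' count of curves with negative punctures at Reeb orbits of $\bbS^{2n+1}_{+1}(\Lambda)$, I would define:
\begin{itemize}
\item augmented symplectic cohomology $SH_\epsilon(W)$;
\item augmented wrapped Floer cohomology $HW_\epsilon(C)$ for the cocore disk $C$ of the surgery handle, or dually for the Lagrangian cylinder over $\Lambda$.
\end{itemize}
Both should carry unital ring structures, with Viterbo-type transfer maps to and from the filled cone $\D^{2n+2}$.

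\textbf{Step 2: vanishing from the standard end.} The positive end is the standard sphere, and $\Lambda$ sits in $\D^{2n+2}$ with filling $L$. Gluing $L$ to the core of the inverse handle gives an exact Lagrangian $\bar L$ in the capped cobordism. For asymptotically dynamically convex pairs (Lazarev), the relevant wrapped Floer cohomology of $\bar L$ with the augmentation-defined twisting is independent of fillings. Because $\Lambda$ lives in $(\bbS^{2n+1},\xi_{\std})$, whose $SH$ vanishes, I expect the unit to be killed. The expected consequence is that $HW_\epsilon(\bar L)=0$, via the module structure over a vanishing $SH_\epsilon$.

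\textbf{Step 3: nonvanishing from the filling.} The $\Z$-grading, the vanishing Maslov class of $L$, and the spin structure make the low-action part a Morse complex. Grading arguments together with asymptotic dynamical convexity ($n\ge2$) show the Reeb chord and orbit contributions sit in degrees that cannot cancel the topology. This yields a spectral sequence from $H^*(L;\C)$, or $H^*(\bar L)$, to $HW_\epsilon$. The surgery-disk part supplies only a copy of $\C$ in top/bottom degree, and that copy can be the piece killed by the unit. The extra classes guaranteed by $H^*(L;\C)\ne\C$ must survive for degree reasons, contradicting Step 2.

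\textbf{Main obstacle.} The key difficulty is the degree bookkeeping in Step 3. Without the tb condition available, I must show that the augmentation-twisted differential cannot kill $H^{>0}(L)$. That is exactly where the $\Z$-grading of $\epsilon$ and asymptotic dynamical convexity must be used. A secondary obstacle is making the augmented invariants well defined and independent of choices; I would take this from the paper's general construction.
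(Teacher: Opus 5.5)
There is a genuine gap, and it is exactly where you placed your main obstacle: Step 3 cannot work as stated. You claim that the classes of $H^*(L;\C)$ beyond $H^0$ ``must survive for degree reasons.'' This is false. For the given filling $L\subset\D^{2n+2}$ one has $HW^*(L)=0$, because $SH^*(\D^{2n+2})=0$. So every class of $H^*(L;\C)$ is cancelled by Reeb chords, and the tautological sequence \eqref{eqn:exact_L} gives $HW^*_+(L)\cong H^*(L;\C)[1]$.

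ADC positivity $|c|_{\SFT}>0$ only places chord generators in wrapped degrees $|\overline{c}|\le n-2$. That does not stop them from cancelling $H^{\le n-1}(L)$. ADC is not what keeps chords away from the topology.

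Your glued Lagrangian $\bar L$ is also not defined. Both $L\subset\D^{2n+2}$ and the surgery disk $L_0\subset W_\Lambda$ have $\Lambda$ as boundary on the \emph{convex} end $(\bbS^{2n+1},\xi_{\std})$. There is therefore no exact manifold, still capped below by $\epsilon$, in which they glue along $\Lambda$.

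The missing idea is to keep the two fillings separate and compare their \emph{positive} wrapped Floer cohomologies through ADC invariance, \Cref{prop:ind}. Take an ADC pair with $\pi_1(\bbS^{2n+1})=\pi_1(\bbS^{2n+1},\Lambda)=1$. Then $HW^*_+(\cdot,\epsilon)$, as a $\Z$-graded space, is independent of the cobordism, of the Maslov-zero spin filling, and of the $\Z$-graded augmentation of the negative end. The mechanism is neck-stretching along ADC hypersurfaces near $\bbS^{2n+1}$. Because $\epsilon$ is $\Z$-graded, only degree-zero negative punctures are weighted nontrivially. Index positivity then excludes lower-level contributions. This is where the $\Z$-grading and ADC enter; the hypothesis $n\ge 2$ gives $c_1(W_\Lambda)=0$ and canonical $\Z$-gradings. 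None of it is degree bookkeeping in a spectral sequence.

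With this in hand, the proof closes as follows.
\begin{enumerate}
\item The argument of \Cref{thm:vanishingCH} applies: by \Cref{prop:sphere} and \Cref{prop:CO}, some class $1+x\in H^{\mathrm{even}}(W_\Lambda)$ dies in $SH^{*,<4\pi+\delta}(W_\Lambda,\epsilon)$. Hence $1\in H^0(L_0)$ dies in $HW^*(L_0,\epsilon)$.
\item Since $HW^*(L_0,\epsilon)$ is a unital ring, it vanishes, so $HW^*_+(L_0,\epsilon)\cong\C[1]$.
\item Comparing with $HW^*_+(L)\cong H^*(L;\C)[1]$ via \Cref{prop:ind} forces $H^*(L;\C)\cong\C$, a contradiction.
\end{enumerate}

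Finally, $SH^*(W_\Lambda,\epsilon)$ is not a ring and need not vanish (\Cref{rmk:ring}). So ``module structure over a vanishing $SH_\epsilon$'' is not the mechanism in your Step 2. What is killed is the image of one specific class, and the ring structure doing the work is the one on $HW^*(L_0,\epsilon)$.
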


A rich source of such asymptotically dynamically convex pairs comes from the \(h\)-principles of Eliashberg--Ganatra--Lazarev \cite[Corollary 4.5]{EGL:flexiblelagrangians}. In particular, by \cite[Theorem 4.7]{EGL:flexiblelagrangians}, when \(n=2\), the filling \(L\) may be obtained from any closed oriented \(3\)-manifold that is not a rational homology sphere by removing a ball. Moreover, the same theorem asserts that one may assume \(\partial L\) is formally isotopic to the standard Legendrian unknot. Inspired by these results, we pose the following conjecture.

\begin{conjecture}
Let \(n\ge 2\). If a Legendrian sphere
\(\Lambda\subset(\bbS^{2n+1},\xi_{\std})\) admits an exact Lagrangian filling
\(L\) that is not diffeomorphic to a ball, then
\((\bbS^{2n+1}_{+1}(\Lambda),\xi_{+1}(\Lambda))\) is algebraically overtwisted.
\end{conjecture}

\subsection{Ingredients in our proofs}

Our proof strategies are quite different from those of \cite{avdek2020combinatorial, Li-Zhou}. The proofs of the algebraic overtwistedness of the contact \(3\)-manifolds in those articles rely largely on combinatorial methods whose applicability rests on the positivity of intersections of holomorphic curves in symplectic $4$-manifolds.

The main ingredients in the proofs of this article are versions of symplectic cohomology and wrapped Floer cohomology (reviewed in \S \ref{s3}) defined using augmentations of chain-level contact homology algebras in \S \ref{s4}. Typically such invariants are defined for a Liouville filling of a contact manifold (for symplectic cohomology) and exact Lagrangians with Legendrian boundary inside of such fillings (for wrapped Floer cohomology). We will apply such cohomology theories to surgery cobordisms $W$ from $(\bbS^{2n+1}_{+1}(\Lambda),\xi_{+1}(\Lambda))$ to $(\bbS^{2n+1},\xi_{\std})$, using special contact forms described in \S \ref{s2}.

To establish \Cref{thm:vanishingCH}, we argue by contradiction. Here is a high-level sketch of the argument: If the chain-level contact homology algebra of the surgered manifold \((\bbS^{2n+1}_{+1}(\Lambda),\xi_{+1}(\Lambda))\) admitted an augmentation (with \( \C\) coefficients and \( \Z/2\Z\)-valued grading), then we would be able to apply our augmented wrapped Floer cohomology to prove that \(\tb(\Lambda) = (-1)^{\frac{n(n-1)}{2}+1}\), yielding a contradiction. Hence the algebra in question does not admit an augmentation. On the other hand, this algebra admits an augmentation if and only if its homology is non-zero. This last statement is a consequence of \Cref{Lemma:aug}, which essentially follows from the weak Nullstellensatz.

\subsection*{Acknowledgments}
We thank Georgios Dimitroglou Rizell for suggesting \Cref{Lemma:aug} and Austin Christian for interesting discussions.

R.A. is supported by the Centre National de la Recherche Scientifique (CNRS). Y.L. is supported by the National Natural Science Foundation of China under Grant No.\ 12271349. Z.Z. is supported by the National Key R\&D Program of China under Grant No.\ 2023YFA1010500, and by the National Natural Science Foundation
of China under Grant Nos.\ 12625108, 12288201 and 12231010.
\section{Topological and dynamical preliminaries}\label{s2}

Here we provide a brief overview of Legendrians in $(\bbS^{2n+1},\xi_{\std})$, contact $(+1)$-surgery, and ADC Legendrians. \Cref{prop:contact} constructs a contact form on $(\bbS^{2n+1},\xi_{\std})$ which will be used throughout the article.

\subsection{Legendrian spheres in $(\R^{2n+1},\xi_{\std})$ and
$(\bbS^{2n+1},\xi_{\std})$}\label{SS2.1}

Let \(\B^{2n}(R)\) denote the radius-\(R\) disk in
\(\C^n=\R^{2n}\) centered at the origin. We consider a Legendrian sphere
\(\Lambda\subset(\R^{2n+1},\xi_{\std})\), where
\[
\xi_{\std}=\ker \alpha_{\std}, \quad \alpha_{\std}=\rd z-\sum_{i=1}^n y_i\,\rd x_i.
\]
Since the map
\[
(z,x_i,y_i)\mapsto (r^2z,rx_i,ry_i),\qquad r>0,
\]
is a contact dilation, we may assume, up to Legendrian isotopy, that
\[
\Lambda\subset (0,1)_z\times \B^{2n}(1)\subset \R_z\times \R^{2n}.
\]
Consequently, all Reeb chords \(c\) contained in this Darboux chart satisfy
\(\int c^*\alpha_{\std}<1\). We assume throughout that all such chords are
nondegenerate.

Choose a Liouville form \(\lambda\) on \(\C\) satisfying the following conditions:
\begin{enumerate}
\item \(\lambda=-y\,\rd x\) on \(\B^2(1)\).
\item \(\lambda=\frac{1}{2}(x\,\rd y-y\,\rd x)\) outside \(\B^2(2)\).
\end{enumerate}
We may view \(\Lambda\) as lying in
\[
\left((0,1)_z\times \B^{2n}(2),\,
\rd z+\sum_{i=1}^n\pi_i^*\lambda\right).
\]
Here \(\pi_i:\C^n\to \C\) is the projection to the \(i\)-th factor. We compactify this to a strict contact manifold
\[
\begin{gathered}
(\bbS^{2n+1},\alpha)
=
\partial\left(\B^2(3)\times \B^{2n-2}(3)\times \B^2(2),
\lambda_{\mathrm{prod}}\right),\\
\lambda_{\mathrm{prod}}
=
\frac{1}{2}\rho^2\,\rd\theta
+
\sum_{i=1}^{n-1}\pi_i^*\lambda+\pi_n^*\lambda.
\end{gathered}
\]
Here \((\rho,\theta)\) are polar coordinates on \(\B^2(3)\). One must, of course, smooth the corners which is discussed in \cite[\S 2.1]{product}. The Legendrian \(\Lambda\) embeds into this compactification by
\[
(0,1)_z\times \B^{2n}(1)
\longrightarrow
\bbS^1_\theta\times \B^{2n-2}(3)\times \B^2(2),
\qquad
(z,x)\longmapsto \left(\frac{2}{9}z,x\right).
\]
Under this compactification, all Reeb chords of \(\Lambda\) (with respect to $\alpha$) have action either
\begin{enumerate}
\item less than \(1\), in which case they are contained in the original Darboux chart, or 
\item greater than \(9\pi-1\).
\end{enumerate}

Following \cite[\S 2.1]{product}, after smoothing corners the contact boundary of
\[
\left(\B^2(3)\times \B^{2n-2}(3)\times \B^2(2),
\lambda_{\mathrm{prod}}\right)
\]
has the following properties:
\begin{enumerate}
\item All Reeb orbits have period at least \(4\pi\).
\item There is a unique Reeb orbit \(\gamma_{\min}\) of period \(4\pi\), contained in
\[
\B^2(3)\times \B^{2n-2}(3)\times \partial\B^2(2).
\]
Its Conley--Zehnder index is \(n+2\), and it kills the unit in the symplectic cohomology of \(\B^2(3)\times \B^{2n-2}(3)\times \B^2(2)\) by \cite[\S 2.4, 2.6]{product}.
\end{enumerate}

Let \(\Lambda'\) be a push-off of \(\Lambda\) along the Reeb vector field, that is, the \(z\)-direction of the Darboux chart which contains $\Lambda$. The Thurston--Bennequin invariant \(\tb(\Lambda)\) is the linking number of \(\Lambda\) and \(\Lambda'\). Equivalently, it is the signed count of self-intersections in the Lagrangian projection
\[
\pi:\R^{2n+1}\to \R^{2n},\qquad
(z,x_1,y_1,\ldots,x_n,y_n)\mapsto (x_1,y_1,\ldots,x_n,y_n).
\]
Each self-intersection corresponds to a Reeb chord \(c\) contained in our Darboux chart, whose endpoints, \(a\) and \(b\), satisfy \(z(a)>z(b)\). Let
\[
V_a=\rd\pi(T_a\Lambda),\qquad V_b=\rd\pi(T_b\Lambda).
\]
After fixing an orientation of \(\Lambda\), we define \(\sign(c)=1\) if the
orientation of \(V_a\oplus V_b\) agrees with the orientation of \(\C^n\), and
\(\sign(c)=-1\) otherwise. Then
\[
\tb(\Lambda)=\sum_c \sign(c).
\]

By \cite[Proposition 3.2]{EES}, when \(n\) is even, one has
\[
\tb(\Lambda)=(-1)^{n/2+1}.
\]
Moreover, \cite[Proposition 3.3]{EES} gives
\[
\tb(\Lambda)
=
(-1)^{\frac{(n-2)(n-1)}{2}}
\sum_c (-1)^{|c|_{\SFT}}.
\]
Here \(|c|_{\SFT}\) denotes the SFT degree of the chord \(c\) used to define Chekanov--Eliashberg algebras and Legendrian contact homology in
\cite[\S 2.3]{EES}.\footnote{In \cite[\S 2.3]{EES}, \(|c|_{\SFT}\) is called
the grading and is denoted by \(|c|\). It equals the Conley--Zehnder index of
\(c\) minus \(1\).}

We summarize the above setup as follows.

\begin{proposition}\label{prop:contact}
Let \(\Lambda\subset(\bbS^{2n+1},\xi_{\std})\) be a Legendrian sphere. Then
there exists a contact form \(\alpha\) on \(\bbS^{2n+1}\) for which the following conditions are satisfied:
\begin{enumerate}
\item The shortest Reeb orbit has period \(4\pi\) and Conley--Zehnder index
\(n+2\).
\item For any exact filling \((W,\lambda)\) of \((\bbS^{2n+1},\xi_{\std})\) with \(\lambda|_{\partial W}=\alpha\), the positive symplectic cohomology
\(SH^{*,<4\pi+\delta}_+(W;\Q)\), for \(0<\delta\ll 1\), has rank two, and the composition
\[
SH^{*,<4\pi+\delta}_+(W;\Q)
\longrightarrow H^{*+1}(W;\Q)
\longrightarrow H^0(W;\Q)
\]
is surjective.
\item Up to Legendrian isotopy, all Reeb chords of \(\Lambda\) have action
either less than \(1\) or greater than \(9\pi-1>4\pi\), and the signed count
of the Reeb chords with action less than \(1\) equals \(\tb(\Lambda)\).
\end{enumerate}
\end{proposition}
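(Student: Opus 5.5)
The plan is to take $\alpha$ to be the contact form on the smoothed boundary of the product domain $P=\B^2(3)\times \B^{2n-2}(3)\times \B^2(2)$ with Liouville form $\lambda_{\mathrm{prod}}$, as constructed above. We then check the three items in turn, using \cite[\S 2.1, 2.4, 2.6]{product} for the dynamics and \cite[Proposition 3.3]{EES} for the chord count.

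For (1), the Reeb dynamics on $\partial P$ after smoothing corners splits into contributions from the factors. On the region where the boundary is $\partial\B^2(R)\times(\text{rest})$, the Reeb flow rotates the circle factor, so orbits have period equal to the area $\pi R^2$ of that disk factor (up to smoothing errors). Near the standard region of $\lambda$ on $\C$, the form $\lambda=\tfrac12(x\,\rd y-y\,\rd x)$ outside $\B^2(2)$ makes this computation identical to the round case. Since $\lambda$ is a deformation of the standard form only inside $\B^2(1)$, the areas are unchanged: $9\pi$ for the radius-$3$ factors and $4\pi$ for the radius-$2$ factor. Orbits in the corner strata have period at least the minimum of these. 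So the shortest orbit is the family $\B^2(3)\times\B^{2n-2}(3)\times\partial\B^2(2)$, which after the perturbation in \cite{product} becomes a unique nondegenerate orbit $\gamma_{\min}$ with $\CZ=n+2$; we take the index computation from \cite{product}.

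For (2), let $W$ be any exact filling with $\lambda|_{\partial W}=\alpha$. Below action $4\pi+\delta$, the positive symplectic cochain complex is generated by the two Morse–Bott generators associated to $\gamma_{\min}$: a minimum/maximum pair from the $S^1$ family, in degrees differing by one. Since there are no other orbits in this action window, there are no differentials, and $SH_+^{*,<4\pi+\delta}(W;\Q)$ has rank two. The connecting map to $H^{*+1}(W)$ and the projection to $H^0(W)$ can be analyzed by neck-stretching along $\partial W$ (or by the independence of low-action counts from the filling in the ADC-type setting). This reduces the claim to the count for the model filling $P$ itself. There $SH^*(P)=0$, so the unit $1\in H^0(P)$ must be killed. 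By action reasons it can only be killed by $\gamma_{\min}$ \cite[\S 2.4, 2.6]{product}, which gives surjectivity. The main obstacle is exactly this transfer from $P$ to an arbitrary $W$. I would handle it by arguing that the relevant moduli spaces are half-cylinders asymptotic to $\gamma_{\min}$ with a point constraint. Since $\gamma_{\min}$ has minimal action, these cannot break at the boundary. Their count is then a cobordism invariant that can be computed after deforming $W$ near the boundary to contain a collar of $P$ minus a ball. Alternatively, the count is determined by the local holomorphic planes in the symplectization, which are independent of $W$.

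For (3), we use the Darboux-chart embedding described above, with $\Lambda\subset(0,1)_z\times\B^{2n}(1)$ placed in $\bbS^1_\theta\times\B^{2n-2}(3)\times\B^2(2)$. Chords contained in the chart have action less than $1$ and correspond to double points of the Lagrangian projection. Any other chord must wind around the $\theta$-circle of length $9\pi$, so its action exceeds $9\pi-1$. The signed count of the short chords equals $\tb(\Lambda)$ by the double-point formula recalled above.
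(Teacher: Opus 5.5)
Your construction of $\alpha$ and your treatment of items (1) and (3) match the paper. The paper likewise just quotes \cite[Proposition 3.2 and proof of Theorem 1.1]{product} for the dynamics. It reads (3) off the chart, where $\frac92\,\rd\theta=\rd z$ makes the Reeb flow vertical over $\B^{2n}(1)$, so a chord either stays in the chart or wraps the $9\pi$-circle. (Minor slip: $\lambda$ is non-standard on all of $\B^2(2)$, not just on $\B^2(1)$; the periods are unaffected, by Stokes.) The trouble is in the part of (2) that you justify yourself rather than cite.

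First, $SH^*(P)=0$ only says the unit dies at \emph{some} action level. Your claim ``by action reasons it can only be killed by $\gamma_{\min}$'' presupposes that $1$ already vanishes in $SH^{*,<4\pi+\delta}(P)$. That filtered statement is the real content of \cite[Proposition 3.2]{product}; for instance, it follows from the split structure $P=V\times\B^2(2)$, where the unit of the disk factor is killed by its boundary orbit of area $4\pi$.

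Second, and more importantly, in the transfer to an arbitrary $W$, minimality of the action of $\gamma_{\min}$ does not by itself prevent breaking. Push the point constraint into the collar and stretch along a slice below it. Action then only bounds the total period of the negative ends by roughly $4\pi$. So a building consisting of a top level from $\check\gamma_{\min}$ through the point, with one negative end at $\gamma_{\min}$ itself and capped by a plane in $\widehat W$, passes the action test. What rules it out is an index count. That top level is a cylinder from $\check\gamma_{\min}$ to $\gamma_{\min}$ carrying a point constraint of codimension $2n+2$, so its virtual dimension is negative. It lives in the collar, where $J$ can be chosen generically and the curve is somewhere injective, so it does not exist. This combination of action \emph{and} index estimates is what the paper invokes in the proof of \Cref{prop:sphere}; with it, your reduction to the model filling goes through.

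Also, the differential between $\check\gamma_{\min}$ and $\hat\gamma_{\min}$ vanishes because $\gamma_{\min}$ is simple, hence good. It is not enough that no other orbits lie below $4\pi+\delta$.
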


Positive symplectic cohomology will be recalled in \S \ref{s3}. The first and second items above follow from \cite[Proposition 3.2 and proof of Theorem 1.1]{product} applied to \(V=\B^2(3)\times \B^{2n-2}(3)\). In the terminology of \cite{product}, the unit in symplectic cohomology is killed by the check generator \(\check{\gamma}_{\min}\) associated to the shortest Reeb orbit \(\gamma_{\min}\) on \(\partial W\). Moreover, \(SH^{*,<4\pi+\delta}_+(W;\Q)\) is generated by the two Morse--Bott perturbation generators \(\check{\gamma}_{\min}\) and \(\hat{\gamma}_{\min}\).

\subsection{Contact $(+1)$-surgery}

We now review contact $(+1)$-surgery following the exposition of Conway--Etnyre \cite{CE20}. The construction is implicit in the theory of Weinstein handle attachments
\cite{CE12,Eli90,Weinstein}.

Consider the standard symplectic form
\[
\omega
=
\sum_{i=1}^k \rd q_i\wedge \rd p_i
+
\sum_{i=1}^{N-k}\rd x_i\wedge \rd y_i
\]
on \(\R^{2k}\times \R^{2N-2k}\). The Liouville vector field
\[
v
=
\sum_{i=1}^k
\left(-p_i\partial_{p_i}+2q_i\partial_{q_i}\right)
+
\frac{1}{2}\sum_{i=1}^{N-k}
\left(x_i\partial_{x_i}+y_i\partial_{y_i}\right)
\]
will be used to define a model Weinstein \(k\)-handle. For \(a,b>0\), let \(\D_a\) be the
radius-\(a\) disk in the \(p_i\)-subspace, and let \(\D_b\) be the radius-\(b\)
disk in the \((q_i,x_i,y_i)\)-subspace. Then
\[
H_{a,b}=\D_a\times \D_b
\]
is the standard (square) Weinstein \(k\)-handle. The Liouville vector field, $v$, points
out of $H_{a,b}$ along
\[
\partial_+H_{a,b}:=\D_a\times \partial\D_b
\]
and into $H_{a,b}$ along
\[
\partial_-H_{a,b}:=\partial\D_a\times \D_b,
\]
thereby inducing contact structures on the smooth boundary strata of $H_{a,b}$.

The sphere
\[
\bbS_a=\partial\D_a\times \{0\}
\]
is isotropic, while
\[
\bbS_b=\{0\}\times \partial\D_b
\]
is coisotropic. By Moser's trick, the germ of the contact structure along \(\bbS_a\subset\partial_-H_{a,b}\) is contactomorphic to the germ along any isotropic sphere with trivial conformal symplectic normal bundle in a contact manifold. Similarly, the germ along \(\bbS_b\subset\partial_+H_{a,b}\) is contactomorphic to the germ along a standard
coisotropic sphere in any contact manifold \cite[Lemma 3.4]{CE20}.

Consider an isotropic sphere in a contact manifold \(Y\), with a chosen trivialization of its conformal symplectic normal bundle. We may attach \(H_{a,b}\) to the positive boundary of a finite symplectization of \(Y\) along \(\partial_-H_{a,b}\), for \(b\ll 1\), by identifying this boundary stratum of $H_{a,b}$ with a Weinstein neighborhood of the sphere. The result is a Weinstein cobordism with concave boundary \(Y\) and convex boundary \(Y'\). This operation is called \emph{isotropic surgery}.\footnote{In order to obtain a Weinstein cobordism with smooth boundary, one must address the smoothing of the corners, $\partial \partial_{-}H_{a,b}$. We ignore the details of this smoothing, which may be found in the literature.}

Conversely, given a coisotropic sphere in \(Y\), one can attach \(H_{a,b}\) to the negative boundary of a finite symplectization of $Y$ by identifying
\(\partial_+H_{a,b}\), for \(a\ll 1\), with a neighborhood of the sphere. This surgery produces a Weinstein cobordism with convex boundary \(Y\) and concave boundary \(Y''\). This operation is called \emph{coisotropic surgery}.

These operations are inverse to one another: Isotropic surgery can be undone by coisotropic surgery on the corresponding sphere in the handle, and conversely \cite[Lemma 3.9 and Proposition 3.10]{CE20}. By \cite[Theorem 1.1]{CE20}, every closed contact \((2N-1)\)-manifold can be constructed by applying a sequence of such surgeries to contact manifolds, starting with \((\bbS^{2N-1},\xi_{\std})\).

By the functoriality of contact homology with respect to Weinstein cobordisms, only coisotropic surgeries can kill contact homology. In contrast with isotropic spheres \(\bbS^{k-1}\) with \(k<N\), coisotropic spheres do not satisfy an \(h\)-principle \cite[Theorem 1.3 and Remark 6.1]{GNW}; this makes the existence of contact manifolds obtained by coisotropic surgery more subtle. Legendrian spheres, which are both isotropic and coisotropic, exhibit both flexible and rigid behavior: Loose Legendrians satisfy an \(h\)-principle \cite{Murphy:LooseLegendrian}, while non-loose Legendrians display various forms of symplectic rigidity.

\begin{definition}
\emph{Contact $(+1)$-surgery} along a Legendrian sphere \(\Lambda\subset Y\) is
coisotropic surgery along \(\Lambda\). We denote the contact manifold resulting from this surgery by \(Y_{+1}(\Lambda)\). The associated Weinstein cobordism from
\(Y_{+1}(\Lambda)\) to \(Y\) will be denoted by \(W_{\Lambda}\).
\end{definition}

\begin{remark}
Contact $(+1)$-surgery a priori depends on a parameterization \(\bbS^{N-1}\simeq \Lambda\) (up to isotopy) of the Legendrian sphere, and the resulting contact manifold, even its smooth structure, may depend on this choice. \Cref{thm:vanishingCH} is independent of such a choice of parametrization, and so we suppress it from our notation.
\end{remark}

In \cite{Avdek21}, the first author gave an alternative definition of $(+1)$-surgery using a Dehn--Seidel twist. This approach was used in \cite{CMP} to show that a contact manifold is overtwisted if and only if it can be described as the result of contact $(+1)$-surgery on a loose Legendrian.

\subsection{ADC Legendrians}

Asymptotically dynamically convex (ADC) contact manifolds and Legendrian submanifolds
were introduced by Lazarev in \cite{Lazarev:flexiblefillings}.

\begin{definition}[{\cite[Definition 3.6]{Lazarev:flexiblefillings}}]
A contact manifold \((Y^{2N-1},\xi)\) with \(c_1(\xi)=0\) is called
\emph{asymptotically dynamically convex (ADC)} if the following condition holds: There exists
\begin{enumerate}
\item a sequence of contact forms
\(\alpha_1\ge \alpha_2\ge \cdots\) for \((Y^{2N-1},\xi)\)\footnote{That is, \(\alpha_i=f_i\alpha_{i+1}\) for functions \(f_i\ge 1\)} and
\item an increasing sequence
\(D_1<D_2<\cdots\), with \(\lim_i D_i=\infty\),
\end{enumerate}
such that all
\emph{contractible} Reeb orbits of \(\alpha_i\) with period at most \(D_i\) are nondegenerate
and have positive SFT degree, i.e.
\[
\mu_{\CZ}+N-3>0.
\]
\end{definition}

\begin{remark}
Different variants of ADC exist, depending on
the vanishing assumptions for the first Chern class and on the set of Reeb
orbits considered; see \cite[\S 3]{intersection}. For the standard contact
sphere, these variants coincide, so we use Lazarev's original definition.
\end{remark}

The main result of \cite{Lazarev:flexiblefillings} states that the contact
boundary of any flexible Weinstein domain \(W\) with \(c_1(W)=0\) is ADC. The
key example for this paper is the standard contact sphere
\((\bbS^{2n+1},\xi_{\std})\), for \(n\ge 1\). Lazarev also extended the notion
of ADC to Legendrian submanifolds and pairs.

\begin{definition}[{\cite[Definitions 5.1 and 5.2]{Lazarev:flexiblefillings}}]\label{def:ADC_pair}
A connected Legendrian \(\Lambda\subset(Y,\xi)\), with \(c_1(\xi)=0\) and
\[
c_1(Y,\Lambda,\xi)=0\in H^2(Y,\Lambda;\Z),
\]
is called \emph{ADC} if the following condition holds: There exists
\begin{enumerate}
\item a sequence of contact forms
\(\alpha_1\ge \alpha_2\ge \cdots\) and
\item an increasing sequence \(D_1<D_2<\cdots\) with \(\lim_i D_i=\infty\)
\end{enumerate}
such that all Reeb chords $c$ of \(\Lambda\) with respect to \(\alpha_i\) having action \( \leq D_i\) and representing the trivial homotopy class in \(\pi_1(Y,\Lambda)\) are nondegenerate and have positive SFT degree. That is, 
\[
|c|_{\SFT}=\mu_{\CZ}-1>0.
\]
The triple \((Y,\Lambda,\xi)\) is called an \emph{ADC pair} if both the Reeb orbits and the Reeb chords satisfy the corresponding positivity conditions for the chosen sequences of contact forms and period/action thresholds.
\end{definition}

\begin{remark}
In \cite[Definition 5.1]{Lazarev:flexiblefillings}, it is also assumed that on a neighborhood \(U\) of \(\Lambda\), one has \(\alpha_i|_U=c_i\alpha_1|_U\) for constants \(c_i\). This condition is used to prove \cite[Proposition 5.5]{Lazarev:flexiblefillings}, but we will not need it in this article.
\end{remark}

\begin{example}
Let \(L\) be an \((n+1)\)-dimensional manifold with boundary \(\bbS^n\). Given
a complex trivialization
\[
\eta:TL\otimes \C \to L\times \C^{n+1},
\]
it follows from \cite[Corollary 4.5]{EGL:flexiblelagrangians} that there exists a
flexible Lagrangian embedding
\[
(L,\partial L)\subset(\D^{2n+2},\bbS^{2n+1}).
\]
Assume that \(L\) is Morse-subcritical, meaning that there is a Morse function \(f:L\to \R\) with no critical points of index \(\geq n\), such that \(\partial L\) is a regular level set of \(f\) and \(\partial_r f>0\). Here \(r\) is the outward-pointing collar coordinate near \(\partial L\). By \cite[Propositions 7.29 and 7.30, arXiv version]{Lazarev:flexiblefillings}, \((\bbS^{2n+1},\Lambda,\xi_{\std})\) is an ADC pair, where \(\Lambda=\partial L\) is the Legendrian boundary of \(L\). There are many examples of such \(L\) with \(H^*(L;\Q)\ne \Q\); see \cite[\S 4]{EGL:flexiblelagrangians}. Thus \Cref{thm:vanishingCH1} applies to
these examples.
\end{example}

\section{Symplectic cohomology, wrapped Floer cohomology, and contact homology}\label{s3}

In this section we briefly review symplectic cohomology, wrapped Floer cohomology, and contact homology. In \Cref{cor:nofilling} we use symplectic and wrapped Floer cohomologies to prove that the contact manifolds $\bbS^{2n+1}_{+1}(\Lambda)$ of \Cref{thm:vanishingCH} do not admit exact symplectic fillings. This is a warm-up for our proof of \Cref{thm:vanishingCH}, which will appear in \S \ref{s5}. In \S \ref{ss:CHreview} we review contact homology and prove the algebraic \Cref{Lemma:aug}, which is used to equate the vanishing of certain contact homology algebras with the non-existence of augmentations of the associated chain-level CDGAs.

Our conventions follow \cite{Ritter1} for symplectic cohomology, \cite{Abouzaid-Seidel,Ritter1} for wrapped Floer cohomology, and \cite{Pardon:Contact,Bao-Honda} for contact homology.

\subsection{Symplectic cohomology}\label{SS3.1}

Let \((W^{2N},\lambda)\) be a Liouville domain, and let
\[
(\widehat{W},\widehat{\lambda})
=
\bigl(W \cup ((1,\infty)_r \times \partial W) ,\,
\lambda \cup r\lambda|_{\partial W}\bigr)
\]
be its completion, obtained by attaching a half-infinite cylindrical end to the boundary. Let \(H\) be a time-dependent Hamiltonian on
\((\widehat{W},\widehat{\lambda})\). The symplectic action of a loop \(\gamma\)
is
\begin{equation}\label{eqn:action}
\cA_H(\gamma):=-\int \gamma^*\widehat{\lambda}
+\int_{S^1}H\circ \gamma\,\rd t.
\end{equation}
Our convention for the Hamiltonian vector field is
\[
\iota_{X_H}\rd\widehat{\lambda}=-\rd H.
\]
We say that an almost complex structure \(J\) is cylindrically convex near
\(\partial W\times\{r_0\}\) if, near the hypersurface \(r=r_0\), one has
\[
\widehat{\lambda}\circ J=\rd r.
\]

We consider Hamiltonians \(H\) that are \(C^2\)-small perturbations of Hamiltonians which are zero on \(W\) and linear of slope \(a \in \R_{>0}\) on \(\partial W\times (1,\infty)\), where \(a\) is not a period of any Reeb orbit of \(R_{\lambda}\) on \((\partial W,\lambda|_{\partial W})\). We may assume that \(H\) is nondegenerate. With these conventions in place, the periodic orbits of $H$ are either
\begin{enumerate}[(i)]
\item\label{i} constant orbits in \(W\), with \(\cA_H\approx 0\) or
\item\label{ii} non-constant orbits near Reeb orbits of \(R_\lambda\) on
\(\partial W\), with action close to minus the action of the corresponding Reeb orbit.
\end{enumerate}

In particular, orbits of the latter type may be assumed to have symplectic action \(\cA_H\in (-a,0)\). To see this, note that \(H\) is an \(\bbS^1\)-dependent \(C^2\)-small perturbation of an autonomous Hamiltonian \(h(r)\) with \(h'(r)=a\) for \(r>1+\epsilon\), where \(\epsilon>0\) is small, and \(h(r)=0\) for \(r\le 1\). The non-constant orbits of \(h(r)\) come in \(\bbS^1\)-families of the form
\[
(\gamma(h'(r_0)t),r_0),
\]
where \(\gamma\) is a Reeb orbit of \(R_\lambda\) on \(\partial W\) with action
\(h'(r_0)\), for \(1<r_0<1+\epsilon\). These \(\bbS^1\)-families arise from
reparameterizations of \(\gamma\). The symplectic action of such an orbit is
\[
-h'(r_0)r_0+h(r_0).
\]
When \(\epsilon\ll 1\), this action is approximately the negative of the action of \(\gamma\), and hence lies in \((-a,0)\). By \cite[Lemma 3.3]{BO_Duke}, a \(C^2\)-small perturbation of \(h(r)\) breaks each \(\bbS^1\)-family into two nondegenerate orbits whose actions are arbitrarily close to that of the original family.

After fixing an \(\bbS^1\)-dependent compatible almost complex structure \(J\) that is cylindrically convex near a slice on which the Hamiltonian is linear of slope \(a\), we consider compactified moduli spaces of Floer cylinders:
\[
\cM_{x,y}
=
\overline{
\left\{
u:\R_s\times \bbS^1_t\to \widehat{W}
\left|
\begin{array}{l}
\partial_su+J(\partial_tu-X_H)=0,\\
\displaystyle\lim_{s\to\infty}u(s,\cdot)=x,\\
\displaystyle\lim_{s\to-\infty}u(s,\cdot)=y
\end{array}
\right.
\right\}/\R}.
\]
Here \(x\) and \(y\) are Hamiltonian orbits, and the quotient is by translation in the \(s\)-coordinate.

For generic \(J\), the count of rigid Floer cylinders defines a cochain complex \(C(H)\). The complex is additively generated by the Hamiltonian orbits, with coefficients in any commutative ring \(R\). In this paper, the default coefficient ring is \(R=\C\). Explicitly, the differential is
\[
\delta(x)
=
\sum_{y,\ \virdim \cM_{x,y}=0}
\left(\#\cM_{x,y}\right)y.
\]
The differential depends on choices of coherent orientations of moduli spaces, which we ignore in our exposition. The cochain complex is graded by \(N-\mu_{\CZ}\), where $\mu_{\CZ}$ denotes the Conley--Zehnder index. In general, this complex has \(\Z/2\Z\)-valued degree grading. If \(c_1(W)=0\), then after fixing a global trivialization of \(\det_{\C}TW\), the grading is \(\Z\)-valued.

The orbits of type \ref{i} form a subcomplex \(C_0(H)\) of \(C(H)\), which is naturally identified with the Morse cohomology of $H|_{W}$. Hence we may naturally identify
\begin{equation*}
H C_{0}(H) = H^{\ast}(W;R).
\end{equation*}
The orbits of type \ref{ii} span an $R$-linear subspace of $C(H)$ (which is, in general, not closed under the differential $\delta$). Hence we can view such orbits as generators of the quotient complex
\begin{equation*}
C_+(H) = C(H)/C_{0}(H).
\end{equation*}

We define the filtered symplectic cohomology and positive symplectic cohomology groups
\begin{equation*}
SH^{*,<a}(W) = H^*(C(H_a)), \quad SH^{*,<a}_+(W) = H^*(C_+(H_a)).
\end{equation*}
These groups are independent of the chosen \(H_a\), provided \(a\) is not the action of a Reeb orbit on \(\partial W\); see \cite[Proposition 2.8]{Zhou:RP2n-1}. Such groups do of course depend on the contact form $\lambda|_{\partial W}$ which we suppress from our notation. Since the non-constant Hamiltonian orbits arise from Reeb orbits as in \ref{ii}, we will say informally that \(SH^{*,<a}_+(W)\) is generated by Reeb orbits of action less than \(a\).

Given two Hamiltonians \(H_a,H_b\) with slopes \(a<b\), one obtains continuation
maps by counting solutions to the \(s\)-dependent Floer equation for a
nonincreasing homotopy from \(H_b\) to \(H_a\). These maps induce morphisms
\[
C(H_a)\longrightarrow C(H_b),
\]
and they are compatible with the splitting into zero and positive complexes. The symplectic cohomology and positive symplectic cohomology of \(W\) are then defined by
\[
SH^*(W):=\varinjlim_{a\to \infty} H^*(C(H_a)),
\qquad
SH_+^*(W):=\varinjlim_{a\to \infty} H^*(C_+(H_a)).
\]
These direct limits fit into the tautological exact sequence
\begin{equation}\label{eqn:SHexactsequence}
\ldots \to H^*(W)\to SH^*(W)\to SH^*_+(W)\to H^{*+1}(W)\to \ldots.
\end{equation}

\subsubsection{Setting the Hamiltonian to zero on \(W\)}\label{ss:3.1.1}

In some situations, for instance in neck-stretching arguments, it is convenient to use Hamiltonians that vanish on \(W\). In this case the Hamiltonian orbits consist of non-constant orbits on the cylindrical end \(\partial W\times (1,\infty)\), which correspond to Reeb orbits on \(\partial W\), together with constant orbits at points of \(W\). To handle the latter, fix an auxiliary Morse function \(f\) on \(W\) such that \(\partial_r f>0\) near \(\partial W\).

To define and compute symplectic cohomology using such data, consider a cochain complex \(C(H,f)\) generated by the non-constant Hamiltonian orbits of \(H\) and the critical points of \(f\). The differential counts points in the compactified moduli spaces
\begin{eqnarray}
\cM_{x,y}
&:=&
\overline{
\left\{
u:\R_s\times\bbS^1_t\to\widehat{W}
\left|
\begin{array}{l}
\partial_su+J(\partial_tu-X_H)=0,\\
\displaystyle\lim_{s\to\infty}u=x,\\
\displaystyle\lim_{s\to-\infty}u=y
\end{array}
\right.
\right\}/\R};
\label{eqn:1}\\
\cM_{p,q}
&:=&
\overline{
\left\{
\gamma:\R_s\to W
\left|
\begin{array}{l}
\partial_s\gamma+\nabla_g f=0,\\
\displaystyle\lim_{s\to\infty}\gamma=p,\\
\displaystyle\lim_{s\to-\infty}\gamma=q
\end{array}
\right.
\right\}/\R};
\label{eqn:2}\\
\cM_{x,q}
&:=&
\overline{
\left\{
\begin{array}{l}
u:\C\to \widehat{W},\\
\gamma:(-\infty,0]\to W
\end{array}
\left|
\begin{array}{l}
\partial_su+J(\partial_tu-X_H)=0,\\
\partial_s\gamma+\nabla_g f=0,\\
\displaystyle\lim_{s\to-\infty}\gamma=q,\\
u(0)=\gamma(0),\\
\displaystyle\lim_{s\to\infty}u=x
\end{array}
\right.
\right\}/\R}.
\label{eqn:3}
\end{eqnarray}
Here \(x,y\) are non-constant Hamiltonian orbits, \(p,q\) are critical points of \(f\), and \(g\) is a metric on \(W\) such that \((f,g)\) is a Morse--Smale pair. The relevant Fredholm theory, compactness, and regularity for these moduli spaces are discussed in \cite[\S 2.1]{Zhou:ADC}.

The positive symplectic cochain complex \(C_+(H,f)\) is generated by the non-constant Hamiltonian orbits of \(H\), with differential
\[
\delta_+(x)
=
\sum_{y,\ \virdim \cM_{x,y}=0}
\left(\#\cM_{x,y}\right)y.
\]
The zero symplectic cochain complex \(C_0(H,f)\) is generated by the critical points of the Morse function \(f \in C^{\infty}(W)\), with differential counting \(\cM_{p,q}\); hence it is the Morse cochain complex of \((f,g)\). Finally, the total symplectic cochain complex \(C(H,f)\) is the mapping cone of the cochain map
\(\delta_{+,0}:C_+(H,f)\to C_0(H,f)\), where
\[
\delta_{+,0}(x)
=
\sum_{p,\ \virdim \cM_{x,p}=0}
\left(\#\cM_{x,p}\right)p.
\]
Continuation maps can also be defined in this model, giving an alternative construction of filtered and positive symplectic cohomology \cite[Proposition 2.10]{Zhou:ADC}.

\subsection{Wrapped Floer cohomology}\label{SS3.2}

There is an analogous construction for exact Lagrangians with cylindrical ends,
namely wrapped Floer cohomology. Let
\begin{equation*}
L\subset(W,\lambda)
\end{equation*}
be an exact Lagrangian whose boundary is Legendrian in $\partial W$ and for which the Liouville vector field is tangent to $L$ along a collar neighborhood of its boundary.

To simplify the action functional needed to set up the wrapped Floer cohomology of $L$, we make the following assumptions throughout the paper:
\begin{enumerate}
\item \(\partial L=\Lambda\) is a connected Legendrian in \(\partial W\).
\item If \(\lambda|_L=\rd f\), then \(f=0\) near \(\partial L\).\footnote{By
modifying the Liouville form, one can always assume that \(f\) is constant
near the boundary; see \cite[Lemma 3.1]{Abouzaid-Seidel}. When \(\Lambda\)
is connected, this constant can be taken to be zero. When \(\Lambda\) is not
connected, the same can be achieved after a symplectic/Hamiltonian isotopy
of \(L\); see \cite[Lemma 4.1]{GDR}.}
\item \(L\) is relatively spin, so that the relevant moduli spaces of
holomorphic disks with boundary on \(L\) can be oriented consistently.
\end{enumerate}
The Lagrangians of this article all satisfy these assumptions. Let
\begin{equation*}
\widehat{L} = L \cup (1, \infty) \times \partial L \subset \widehat{W}
\end{equation*}
denote the completion of \(L\) by the Liouville vector field. We extend \(f\) to \(\widehat{L}\) by setting it equal to zero along the cylindrical end.

Let \(H\) be a time-dependent Hamiltonian on \(\widehat{W}\). The symplectic action of a chord \(x:[0,1]\to \widehat{W}\) with \(x(0),x(1)\in \widehat{L}\) is defined to be
\[
\cA_H(x)
=
f(x(1))-f(x(0))
-\int x^*\widehat{\lambda}
+\int_0^1 H\circ x\,\rd t.
\]
If \(H\) is chosen as in the construction of symplectic cohomology with slope
\(a\), then the critical points of \(\cA_H\), i.e. the Hamiltonian chords
satisfying \(x'(t)=X_H\) and \(x(0),x(1)\in \widehat{L}\), are either
\begin{enumerate}
\item critical points of \(H|_L\) on \(L\subset W\) or
\item close to Reeb chords of \(R_\lambda\) on
\(\partial W\).
\end{enumerate}
For chords of the first type, we assume that on a Weinstein neighborhood of \(L\), the Hamiltonian has the form \(H=\pi^*h+\epsilon\rho^2\), where \(\pi:T^*L\to L\), \(h\) is \(C^2\)-small on \(L\), \(\epsilon\ll 1\), and \(\rho\) is the length function associated to a bundle metric on \(T^*L\). Then the critical points of \(h\) are critical points of \(H\) with the same Morse index. These chords have action approximately zero. The actions of the chords of the second type are approximately equal to the negative of the action of their corresponding Reeb chords. In particular, their actions lie in \((-a,0)\).

Fix a time-dependent compatible almost complex structure \(J\) that is cylindrically convex for a Hamiltonian which is linear of slope \(a\). We consider compactified moduli spaces of Floer strips with boundary on
\(\widehat{L}\),
\[
\cM_{x,y}
=
\overline{
\left\{
u:\R_s\times [0,1]_t\to \widehat{W}
\left|
\begin{array}{c}
\partial_su+J(\partial_tu-X_H)=0,\\
u(\cdot,0),u(\cdot,1)\in \widehat{L},\\
\displaystyle\lim_{s\to\infty}u(s,\cdot)=x,\quad
\displaystyle\lim_{s\to-\infty}u(s,\cdot)=y
\end{array}
\right.
\right\}/\R}.
\]
For generic \(J\), the count of rigid Floer strips defines a cochain complex \(C(L,H)\), freely generated over \(R\) by Hamiltonian chords. Its differential is
\[
\delta(x)
=
\sum_{y,\ \virdim \cM_{x,y}=0}
\left(\#\cM_{x,y}\right)y.
\]
Our assumption that $L$ is relatively spin ensures that we can coherently orient our moduli spaces, and therefore use coefficients in rings $R$ containing $\Q$. This condition ensures that our moduli spaces may be coherently oriented.

The wrapped Floer cohomology is \(\Z/2\Z\)-graded when \(L\) is oriented. This grading can be refined to a \(\Z\)-grading, following \cite{Graded}, if the relative first Chern class \(c_1(W,L)\in H^2(W,L;\Z)\) is \(2\)-torsion. Our grading convention follows \cite{Abouzaid-Seidel}, so that
\eqref{eqn:exact_L} has no degree shift and the product on \(HW^*(L)\) has
degree zero. Let \(c\) be a nondegenerate Reeb chord on \(\Lambda\), and let \(\overline{c}\) be the corresponding nondegenerate Hamiltonian chord for a
Hamiltonian \(H=h(r)\) with \(h'(r)>0\). Then
\[
\dim L-|\overline{c}|-2=|c|_{\SFT},
\]
where \(|\overline{c}|\) is the wrapped Floer cohomology grading used here and
\(|c|_{\SFT}\) is the SFT degree of the Reeb chord. By contrast, the wrapped Floer cohomology conventions in \cite{BEE} and in the arXiv version of \cite{Lazarev:flexiblefillings} use \(|c|_{\SFT}\) as the grading. This accounts for the grading discrepancy in \cite[\S 7.1.2, arXiv version]{Lazarev:flexiblefillings}.

As in symplectic cohomology, \(C(L,H)\) has a subcomplex \(C_{0}(L, H)\) generated by constant chords, whose cohomology computes \(H^*(L)\). We also have a quotient complex 
\[
C_+(L,H) = C(L, H)/C_{0}(L, H)
\]
generated by non-constant chords.

We define the filtered wrapped Floer cohomology and positive wrapped Floer cohomology as 
\[
HW^{*,<a}(L) = H^*(C(L,H_a)), \quad HW^{*,<a}_+(L) = H^*(C_+(L,H_a)).
\]
These groups are independent of \(H_a\), provided \(a\) is not the action of any Reeb chord of \(\Lambda\) on \(\partial W\). We will say informally that \(HW^{*,<a}_+(L)\) is generated by Reeb chords of action less than \(a\).

Continuation maps from a Hamiltonian of slope \(a\) to another of slope \(b>a\) are defined as in symplectic cohomology cochain complexes and respect subcomplexes of constant chords. The wrapped Floer cohomology and positive wrapped Floer cohomology are then defined as
\[
HW^*(L):=\varinjlim_{a\to \infty}H^*(C(L,H_a)),
\qquad
HW_+^*(L):=\varinjlim_{a\to \infty}H^*(C_+(L,H_a)).
\]
These limits fit into the tautological exact sequence
\begin{equation}\label{eqn:exact_L}
\ldots \to H^*(L)\to HW^*(L)\to HW^*_+(L)\to H^{*+1}(L)\to \ldots.
\end{equation}

As in symplectic cohomology, one can take \(H=0\) on \(W\) and use an auxiliary Morse function \(h\) on \(L\), with \(\partial_r h>0\) near \(\partial L\), to set up wrapped Floer cohomology. This gives a model of wrapped Floer cohomology whose generators are critical points of \(h\) together with non-constant Hamiltonian chords. The differential is defined by counting analogues of \eqref{eqn:1}--\eqref{eqn:3}; the analogue of \eqref{eqn:3} is a Floer strip with a removable boundary puncture followed by a gradient trajectory of \(h\). The analytic setup is analogous to \cite[\S 2.1]{Zhou:ADC}. We denote the resulting complexes by
\[
C(L,H,h),\qquad C_0(L,H,h),\qquad C_+(L,H,h).
\]

\begin{proposition}\label{prop:Euler}
Let \(L\) be an oriented relatively spin exact Lagrangian filling of
\(\Lambda\subset(\bbS^{2n+1},\xi_{\std})\) in a Liouville filling \(W\) of
\((\bbS^{2n+1},\xi_{\std})\). We suppose that $\Lambda$ and the contact form $\alpha$ on $\bbS^{2n+1}$ used to set up our Floer cohomologies are as in \Cref{prop:contact}. If \(1\le a<9\pi-1\), then
\[
\chi\bigl(HW^{*,<a}_+(L)\bigr)
=
(-1)^{\frac{n(n-1)}{2}}\tb(\Lambda).
\]
\end{proposition}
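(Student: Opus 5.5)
The plan is to compute the Euler characteristic at the chain level. For \(1\le a<9\pi-1\), \Cref{prop:contact}(3) says that the Reeb chords of \(\Lambda\) with action \(<a\) are exactly the chords of action \(<1\). These all lie in the original Darboux chart and correspond bijectively to the double points of the Lagrangian projection. Choose the Hamiltonian \(H_a\) of slope \(a\) as in \S\ref{SS3.2}, with \(a\) not a chord action. Then the generators of \(C_+(L,H_a)\) are the nondegenerate Hamiltonian chords \(\overline{c}\), one for each such Reeb chord \(c\). There is no Morse--Bott doubling here, since chords of a connected Legendrian are isolated. The complex is finite-dimensional and \(\Z/2\Z\)-graded (because \(L\) is oriented), so its Euler characteristic equals that of its cohomology. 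Hence
\[
\chi\bigl(HW^{*,<a}_+(L)\bigr)=\sum_{c,\ \int c^*\alpha<1}(-1)^{|\overline{c}|}.
\]

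Next I convert the Floer grading to the SFT grading using the relation \(\dim L-|\overline{c}|-2=|c|_{\SFT}\) stated in \S\ref{SS3.2}. With \(\dim L=n+1\), this gives \(|\overline{c}|=n-1-|c|_{\SFT}\), so
\[
(-1)^{|\overline{c}|}=(-1)^{n-1}(-1)^{|c|_{\SFT}}.
\]
Only the parity matters, so a \(\Z/2\Z\)-grading suffices. However, one must check that the parity of \(|\overline{c}|\) coming from the orientation of \(L\) agrees with the parity of \(|c|_{\SFT}\) as defined in \cite{EES} via \(\Lambda\) and its orientation. I would argue this by noting that both parities are computed from the same linear-algebra data at the chord's endpoints, namely the sign of the local index relative to the orientations. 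The grading relation for oriented Lagrangians thus holds mod \(2\) without any Maslov assumption.

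Finally I apply \cite[Proposition 3.3]{EES}, as quoted above:
\[
\sum_c(-1)^{|c|_{\SFT}}=(-1)^{\frac{(n-2)(n-1)}{2}}\tb(\Lambda).
\]
Here the sum runs over the chords in the Darboux chart, which are exactly those of action \(<1\). This yields \(\chi=(-1)^{n-1+\frac{(n-1)(n-2)}{2}}\tb(\Lambda)\). It remains to check the exponent identity
\[
n-1+\tfrac{(n-1)(n-2)}{2}=\tfrac{(n-1)n}{2},
\]
which holds because \(\frac{(n-1)n}{2}-\frac{(n-1)(n-2)}{2}=n-1\). This gives the claimed formula \(\chi=(-1)^{\frac{n(n-1)}{2}}\tb(\Lambda)\).

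I expect the main obstacle to be the mod-\(2\) grading identification in the second step. This includes the relation between Hamiltonian-chord indices and Reeb-chord Conley--Zehnder indices when only orientability is assumed, and making sure that the sign conventions of \cite{EES} for \(|c|_{\SFT}\) match those of \cite{Abouzaid-Seidel}. I would handle it by comparing both indices at each chord with the local sign \(\sign(c)\) defined in \S\ref{SS2.1}, using that \(\sign(c)\) is already expressed through \((-1)^{|c|_{\SFT}}\) up to a global sign.
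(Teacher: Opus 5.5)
Your proposal is correct and follows the paper's proof. Both arguments reduce to the chain-level Euler characteristic of the finite complex \(C_+(L,H_a)\), whose generators are the chords of action \(<1\); convert gradings via \(|c|_{\SFT}=n-1-|\overline{c}|\); apply \cite[Proposition 3.3]{EES}; and finish with the same exponent identity. The paper takes the mod-\(2\) grading relation for oriented \(L\) as given, so your extra check on parities refines the same argument rather than taking a different route.
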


\begin{proof}
The Euler characteristic of \(HW^{*,<a}_+(L)\) is equal to the Euler characteristic of the cochain complex \(C_+(L,H_a)\). This complex is generated by non-constant Reeb chords of action less than \(a\). According to \Cref{prop:contact}, these are precisely the Reeb chords of action less than \(1\). Such Reeb chords are in one-to-one correspondence with self-intersections of the Lagrangian projection of \(\Lambda\).

By \cite[Proposition 3.3]{EES},
\[
\tb(\Lambda)
=
(-1)^{\frac{(n-2)(n-1)}{2}}
\sum_c(-1)^{|c|_{\SFT}}.
\]
Since \(\dim L=n+1\), the grading relation above gives
\[
|c|_{\SFT}=n-|\overline{c}|-1.
\]
Therefore
\[
\chi\bigl(HW^{*,<a}_+(L)\bigr)
=
\sum_c(-1)^{|\overline{c}|}
=
(-1)^{\frac{n(n-1)}{2}}\tb(\Lambda).
\]
\end{proof}

\subsection{Structures on symplectic cohomology and wrapped Floer cohomology}\label{ss:3.3}

Symplectic cohomology and wrapped Floer cohomology carry several algebraic structures defined by counting suitable holomorphic curves. See, for example, \cite{Ritter1}. A convenient way to write the relevant Floer equations is
\begin{equation}\label{eqn:floerTQFT}
u:(S,\partial S)\to(\widehat{W},\widehat{L}),
\qquad
(\rd u-X_H\otimes \beta)^{0,1}=0.
\end{equation}
Here \(H\) is a Hamiltonian on \(\widehat{W}\) of slope \(1\), \(S\) is a fixed punctured Riemann surface, possibly with boundary, and \(\beta\) is a one-form on \(S\). The surface \(S\) may have both interior and boundary punctures, equipped with cylindrical coordinates and asymptotic to Hamiltonian orbits and Hamiltonian chords, respectively. We require
\[
\rd\beta\le 0,\qquad \beta|_{\partial S}=0.
\]
Along a neighborhood of each interior or boundary puncture, modeled on
\((0,\pm\infty)_s\times \bbS^1_t\) or
\((0,\pm\infty)_s\times [0,1]_t\), respectively, we require
\[
\beta=a\,\rd t
\]
for an appropriate weight \(a\). The cylindrical ends are part of the geometric data of \(S\). In particular, the parameterization of a cylindrical end of $S$ along interior punctures determine asymptotic markers, that is, tangent vectors at the punctures (viewed as marked points in a compactification of $S$) defined up to positive rescaling.

The almost complex structure on \(\widehat{W}\) used to define Equation \eqref{eqn:floerTQFT} may depend on the point of \(S\), which is useful for ensuring somewhere-injectivity and regularity of open strata of moduli spaces.\footnote{In general, \(H\) may also depend on \(S\); we omit this for simplicity. For continuation maps, if one wants to preserve the \(C^2\)-smallness of the Hamiltonian on \(W\), then \(H\) may also depend on the \(s\)-coordinate.} For example, for the Floer differential on \(SH^{*,<a}(W)\), one takes \(S=\R_s\times \bbS^1_t\), \(\beta=a\,\rd t\), and \(J\) depending only on \(t\), so as to preserve translation invariance in the \(s\)-coordinate. For continuation maps, one replaces \(\beta\) by \(f(s)\,\rd t\), with \(f'(s)\le 0\) and prescribed asymptotic slopes. The wrapped Floer case is analogous.

Consider the thrice-punctured sphere
\[
\bbS_{2,1}:=\CP^1\setminus \{0,1,\infty\}.
\]
We view \(0\) and \(1\) as positive punctures and \(\infty\) as a negative puncture and fix cylindrical coordinates near the punctures. Choose \(\beta\) on \(\bbS_{2,1}\) such that \(\rd\beta\le 0\), with
\[
\beta=a\,\rd t,\quad b\,\rd t,\quad c\,\rd t
\]
near \(0,1,\infty\), respectively. Such a form exists if and only if \(c\ge a+b\). The pair-of-pants product on symplectic cohomology is defined by counting rigid solutions of
\[
u:\bbS_{2,1}\to\widehat{W},
\qquad
(\rd u-X_H\otimes \beta)^{0,1}=0.
\]
Here \(u\) is asymptotic to Hamiltonian orbits \(x,y,z\) of \(aH,bH,cH\), respectively. The count of such solutions gives a cochain map
\[
C(aH)\otimes C(bH)\to C(cH),
\]
and hence a degree-zero product on filtered cohomology
\[
\smile:
SH^{*,<a}(W)\otimes SH^{*,<b}(W)\to SH^{*,<c}(W).
\]
This product is compatible with continuation maps, and its direct limit is the pair-of-pants product. With this product, the map \(H^*(W)\to SH^*(W)\) in the tautological exact sequence of Equation \eqref{eqn:SHexactsequence} is a morphism of graded supercommutative rings. The resulting maps on cohomology do not depend on the auxiliary choices of \(\bbS_{2,1}\), its cylindrical ends, or \(\beta\).

There is an analogous pair-of-pants product in wrapped Floer cohomology. Counting solutions of
\[
u:(\D_{2,1},\partial \D_{2,1})\to(\widehat{W},\widehat{L}),
\qquad
(\rd u-X_H\otimes \beta)^{0,1}=0,
\]
where \(\D_{2,1}\) is the unit disk with three fixed boundary punctures, two of which are positive, defines the product
\[
\mu_2:
HW^{*,<a}(L)\otimes HW^{*,<b}(L)\to HW^{*,<a+b}(L).
\]
After passing to the direct limit, the product $\mu_2$ gives \(HW^*(L)\) the structure of a ring with unit, which is non-commutative in general. The map \(H^*(L)\to HW^*(L)\) from the
tautological exact sequence of Equation \eqref{eqn:exact_L} is a unital ring map.

Let \(\D_{1,1|1,0}\) denote the unit disk with two fixed boundary punctures (one positive and one negative), and one fixed interior puncture (which we view as positive), all equipped with cylindrical local coordinates. Choose \(\beta\) such that 
\[
\rd\beta\le 0, \quad \beta|_{\partial \D_{1,1|1,0}}=0,
\]
\[
\beta=a\,\rd t
\quad\text{near the positive boundary puncture,}
\]
\[
\beta=b\,\rd t
\quad\text{near the positive interior puncture,}
\]
\[
\beta=c\,\rd t
\quad\text{near the negative boundary puncture,}
\]
with \(c\ge a+b\). Counting rigid solutions of
\[
u:(\D_{1,1|1,0},\partial \D_{1,1|1,0})
\to(\widehat{W},\widehat{L}),
\qquad
(\rd u-X_H\otimes \beta)^{0,1}=0,
\]
where \(u\) is asymptotic to a Hamiltonian chord \(x\) of \(aH\) at the positive
boundary puncture, to a Hamiltonian orbit \(y\) of \(bH\) at the positive
interior puncture, and to a Hamiltonian chord \(z\) of \(cH\) at the negative
boundary puncture, yields an operation
\[
m:
SH^{*,<b}(W)\otimes HW^{*,<a}(L)\to HW^{*,<c}(L).
\]
Passing to the direct limit, this operation gives \(HW^*(L)\) the structure of an \(SH^*(W)\)-module.

\begin{remark}
The construction of the operations \(\smile\), \(\mu_2\), and \(m\) requires the use of \(C^2\)-small Hamiltonians on the domain \(W\). Thus, for these product-type structures, one cannot simply use a mixed Hamiltonian-Floer cascade model with \(H=0\) on \(W\) to handle the Morse--Bott degeneracy of constant orbits. The issue is that \(H=0\) is not Morse--Bott nondegenerate along \(\partial W\). Since stable manifolds of critical points are contained in the interior of \(W\), the Fredholm setup breaks down when the only outputs from the zero part are critical points, unless all inputs and outputs are critical points; see \cite[\S 2.1]{Zhou:ADC}. However, when one input is a critical point and the other is a non-constant orbit or chord, the unstable manifolds of critical points may reach the boundary, creating analytic difficulties in setting up the moduli spaces.\footnote{It is nevertheless possible to build an \(H^*(W)\)-module structure on \(SH^*(W)\) or \(HW^*(L)\) using a cascade model by treating the input from \(H^*(W)\) as a constraint at a marked point.} This is not merely a technical issue: some related constructions should not carry a unital ring structure; see \Cref{rmk:ring}.
\end{remark}

Let \(\D_{0,1|1,0}\) denote the unit disk with one fixed negative boundary
puncture and one fixed positive interior puncture. Choose \(\beta\) such that
\[
\rd\beta\le 0,\qquad
\beta|_{\partial \D_{0,1|1,0}}=0,
\]
\[
\beta=a\,\rd t \quad \text{near the interior puncture},
\]
\[
\beta=b\,\rd t \quad \text{near the boundary puncture},
\]
with \(b\ge a\). Consider solutions to
\[
u:(\D_{0,1|1,0},\partial \D_{0,1|1,0})
\to(\widehat{W},\widehat{L}),
\qquad
(\rd u-X_H\otimes \beta)^{0,1}=0,
\]
where \(u\) is required to be asymptotic to a Hamiltonian orbit \(x\) at the interior puncture and to a Hamiltonian chord \(y\) at the boundary puncture. The count of such rigid solutions gives a map
\[
\CO:SH^{*,<a}(W)\to HW^{*,<b}(L).
\]
By a slight abuse of terminology, we call this the \emph{closed-open map}.

Unlike the product structures above, \(\CO\) can be defined using a Hamiltonian that vanishes on \(W\) as follows: Fix a Morse function \(f\) on \(W\) such that \(\partial_r f>0\) along \(\partial W\). In a Weinstein neighborhood of \(L\), assume
\[
f=\pi^*h+\epsilon\rho^2.
\]
Here \(\epsilon>0\), \(\pi:T^*L\to L\), and \(\rho\) is the length function associated to a bundle metric on \(T^*L\). Using a suitable metric on \(W\) with a similar splitting near \(L\), the critical points on \(L\) provide generators spanning a quotient of the Morse cochain complex on \(W\), canonically identified with the Morse cochain complex of \(h\) on \(L\). This quotient map induces the restriction map \(H^*(W)\to H^*(L)\).

We define compactified moduli spaces
\begin{equation}\label{eqn:CO1}
\cM^{\CO}_{x,y}
:=
\overline{
\left\{
u:(\D_{0,1|1,0},\partial\D_{0,1|1,0})
\to(\widehat{W},\widehat{L})
\left|
\begin{array}{c}
(\rd u-X_H\otimes \beta)^{0,1}=0,\\
u(\partial\D_{0,1|1,0})\subset\widehat{L},\\
\displaystyle\lim_{s\to\infty}u=x,\quad
\displaystyle\lim_{s\to-\infty}u=y
\end{array}
\right.
\right\}/\R}.
\end{equation}
Here \(x\) is a non-constant Hamiltonian orbit of \(aH\), \(y\) is a non-constant Hamiltonian chord of \(bH\), and \(s\) is the cylindrical coordinate defined near the punctures. We also define
\begin{equation}
\cM^{\CO}_{x,q}
:=
\overline{
\left\{
\begin{array}{l}
u:(\D_{0,1|1,0},\partial\D_{0,1|1,0})
\to(\widehat{W},\widehat{L}),\\
\gamma:(-\infty,0]\to L
\end{array}
\left|
\begin{array}{l}
(\rd u-X_H\otimes \beta)^{0,1}=0,\\
\partial_s\gamma+\nabla h=0,\\
u(\partial\D_{0,1|1,0})\subset \widehat{L},\\
\displaystyle\lim_{s\to-\infty}\gamma=q,\\
\displaystyle\lim_{s\to-\infty}u=\gamma(0),\\
\displaystyle\lim_{s\to\infty}u=x
\end{array}
\right.
\right\}/\R}.
\label{eqn:CO3}
\end{equation}
Here \(x\) is a non-constant Hamiltonian orbit of \(aH\), and \(q\) is a critical point of \(h\) on \(L\). Since the stable manifold of \(q\) is precompact in the interior of \(L\) (where the Hamiltonian vanishes), the negative boundary puncture of \(u\) is removable, and the condition
\(\lim_{s\to-\infty}u=\gamma(0)\) is well-defined.

We define a map
\[
\CO:C(H,f)\to C(L,H,h)
\]
using the mixed Hamiltonian--Morse models as follows:
\begin{enumerate}
\item Non-constant Hamiltonian orbits $x$ are sent to non-constant chords $y$ by counting points in \(\cM^{\CO}_{x,y}\).
\item Morse critical points in $W$ are sent to Morse critical points in $L$ via the quotient of Morse cochain complexes.
\item Non-constant Hamiltonian orbits $x$ are sent to Morse critical points $q$ of $L$ by counting points in \(\cM^{\CO}_{x,q}\).
\end{enumerate}

Such counts define morphisms fitting into the commutative diagram
\begin{equation}\label{eqn:CO}
\xymatrix{
\ldots \ar[r] &
H^*(W) \ar[d]^{\CO_0} \ar[r] &
SH^{*,<a}(W) \ar[d]^{\CO} \ar[r] &
SH^{*,<a}_+(W) \ar[d]^{\CO_+} \ar[r] &
H^{*+1}(W) \ar[d] \ar[r] &
\ldots\\
\ldots \ar[r] &
H^*(L) \ar[r] &
HW^{*,<a}(L) \ar[r] &
HW^{*,<a}_+(L) \ar[r] &
H^{*+1}(L) \ar[r] &
\ldots.
}
\end{equation}
To see that the square on the right commutes, note that counting points in \(\cM^{\CO}_{x,q}\) determines a morphism
\[
\CO_{+,0}:C_+(H,f)\to C_0(L,H,h)
\]
giving the desired chain-level homotopy.

\begin{proposition}\label{prop:moduleStructureDecomposition}
The \(SH^*(W)\)-module structure on \(HW^*(L)\) is given by composing the
closed-open map with the product on \(HW^*(L)\). More precisely, for
\(a,b>0\), the filtered module action agrees with
\[
SH^{*,<b}(W)\otimes HW^{*,<a}(L)
\xrightarrow{\CO\otimes \mathrm{id}}
HW^{*,<b}(L)\otimes HW^{*,<a}(L)
\xrightarrow{\mu_2}
HW^{*,<a+b}(L).
\]
\end{proposition}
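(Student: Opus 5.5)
The plan is a standard degeneration (gluing) argument on the domain $\D_{1,1|1,0}$ used to define $m$. I would work at the chain level with fixed slopes $a,b$ and $c=a+b$. First I would construct a one-parameter family of Floer data $(\D^{\tau}_{1,1|1,0},\beta_\tau,J_\tau)$, $\tau\in[0,\infty)$, where $\beta_\tau$ satisfies $\rd\beta_\tau\le 0$, $\beta_\tau|_{\partial}=0$, and has weights $a,b,c$ at the three punctures. At $\tau=0$ the data are those defining $m$. As $\tau\to\infty$ the interior puncture is pushed toward the boundary, so that a boundary neck of length $\tau$ separates it from the other two punctures. In this limit the domain breaks into two pieces: a copy of $\D_{0,1|1,0}$, which carries the interior puncture with weight $b$ and a new negative boundary puncture with weight $b$, and a copy of $\D_{2,1}$ with positive boundary punctures of weights $b$ and $a$ and a negative one of weight $c=a+b$. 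The forms $\beta_\tau$ are chosen to be glued from the forms on these two pieces, which is possible because the weight constraints $b\ge b$ and $c\ge a+b$ are exactly the ones required there. The ordering of the two positive boundary inputs on $\D_{2,1}$ must match the convention in the statement, with the $\CO$ output first and the $HW$ input second. I would arrange this by placing the interior puncture's approach point on the appropriate boundary arc.

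Next I would consider the parametrized moduli space of solutions of \eqref{eqn:floerTQFT} over $\tau\in[0,T]$ whose index is one greater than the rigid index. Compactness (energy bounds from $\rd\beta_\tau\le0$ and exactness, together with the maximum principle for cylindrically convex $J$) and transversality for generic families give a compact $1$-manifold with boundary. Its boundary points are of three kinds: the $\tau=0$ end, which computes $m$; the $\tau=T$ end; and Floer-strip or cylinder breakings at the punctures. The breakings assemble into $\delta\circ K+K\circ\delta$ for a chain homotopy $K$, so the maps at $\tau=0$ and $\tau=T$ are chain homotopic. For $T$ large, the standard gluing theorem identifies the rigid solutions at $\tau=T$ bijectively with pairs made of a rigid $\CO$ solution and a rigid $\mu_2$ solution glued along a Hamiltonian chord of $bH$. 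This gives a chain-level identity
\[
m\simeq \mu_2\circ(\CO\otimes\mathrm{id}),
\]
and hence the claimed equality on filtered cohomology. Finally I would note that the construction commutes with continuation maps, so the identity also passes to the direct limit.

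The main obstacle is the constant orbits and chords. As the earlier remark warns, these product-type operations need $C^2$-small Hamiltonians on $W$ rather than the cascade model. So throughout I would use the version of $\CO$ defined with such a Hamiltonian, namely the $\D_{0,1|1,0}$ count above, rather than the Morse model; the two versions agree on cohomology by a similar continuation argument. I would also need to check that gluing at a chord near a critical point of $h$ behaves well. With a $C^2$-small nondegenerate $H$ this is the usual nondegenerate gluing, so I expect only bookkeeping of orientations and of signs in the ordering of inputs to remain. Any Koszul sign arising from moving the $SH$ input past the $HW$ input would be absorbed into the ordering conventions on $\D_{2,1}$.
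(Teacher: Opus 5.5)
Your proposal is correct and follows the same route as the paper: a one-parameter deformation of the Floer data on $\D_{1,1|1,0}$ that pushes the interior puncture to a boundary arc, so the domain breaks into a $\CO$ disk $\D_{0,1|1,0}$ glued at a Hamiltonian chord to a $\D_{2,1}$ computing $\mu_2$, with the parametrized moduli space supplying the chain homotopy. Your extra bookkeeping is consistent with the paper's setup: the weight constraints, the choice of boundary arc to fix the order of inputs, and the use of $C^2$-small Hamiltonians rather than the cascade model.
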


\begin{proof}
This follows from a one-parameter deformation of the domain curve, together
with the associated geometric data and Floer equations, in which the interior
puncture is pushed to the boundary; see \Cref{fig:deg}.
\end{proof}

\begin{figure}[htb]
{\small
\begin{overpic}[scale=0.4]{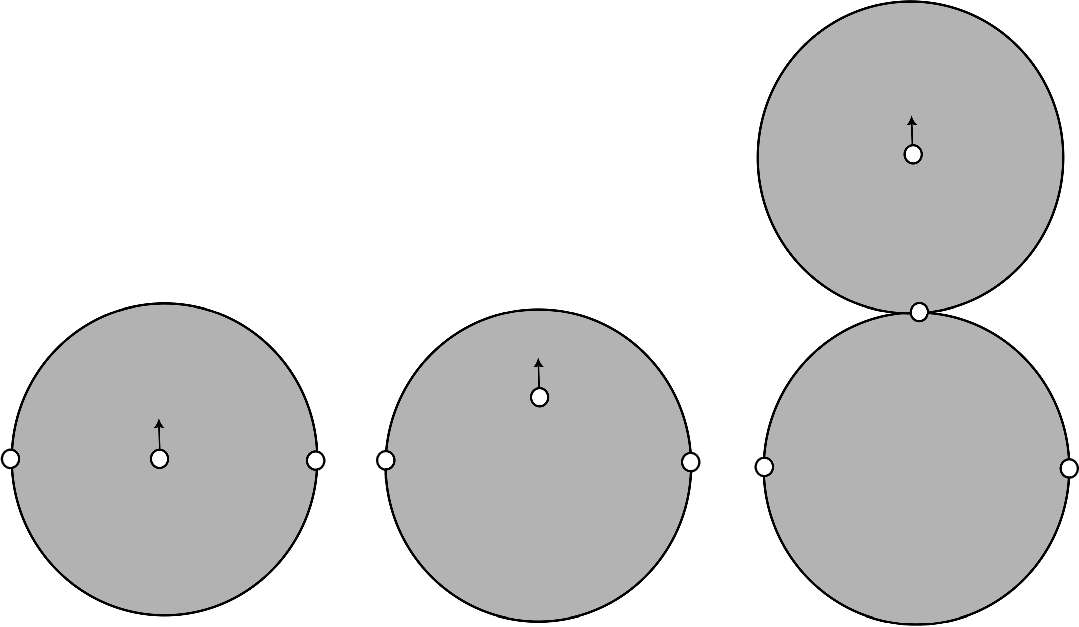}
\put (30, 20) {$m$}
\put (102, 25) {$m$}
\put (175,30) {$\mu_2$}
\put (173,75) {$\CO$}
\put (4,30) {$+$}
\put (33,30) {$+$}
\put (51,30) {$-$}
\put (77,30) {$+$}
\put (123,30) {$-$}
\put (106,42) {$+$}
\put (150,30) {$+$}
\put (197,30) {$-$}
\put (173,52) {$+$}
\put (173,62) {$-$}
\put (178,90) {$+$}
\end{overpic}}
\caption{Degeneration of punctured surfaces as interior marked points tend to the boundary of disks as described in the proof of \Cref{prop:moduleStructureDecomposition}.}
\label{fig:deg}
\end{figure}

We are now ready to prove a weak version of \Cref{thm:vanishingCH} using the aforementioned algebraic structures.

\begin{corollary}\label{cor:nofilling}
Under the assumptions of \Cref{thm:vanishingCH}, \((\bbS^{2n+1}_{+1}(\Lambda), \xi_{+1}(\Lambda))\) does not admit a Liouville filling.
\end{corollary}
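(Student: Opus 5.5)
Suppose for contradiction that $(\bbS^{2n+1}_{+1}(\Lambda),\xi_{+1}(\Lambda))$ has a Liouville filling $V$. Stack the surgery cobordism $W_\Lambda$ on top of $V$ to get a Liouville filling $W=V\cup W_\Lambda$ of $(\bbS^{2n+1},\xi_{\std})$. After a Liouville homotopy near the top boundary we may assume $\lambda|_{\partial W}=\alpha$, the contact form of \Cref{prop:contact}. The co-core of the Weinstein handle attached along $\Lambda$ is an exact Lagrangian disk in $W_\Lambda$ with Legendrian boundary $\Lambda$. Call it $L\cong \D^{n+1}$. It is oriented and spin, and it is cylindrical near $\partial W$. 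So the constructions of \S\ref{SS3.2}--\S\ref{ss:3.3} apply to $L\subset W$.

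Next we show that $HW^{*,<a}(L)=0$ for $a$ slightly above $4\pi$ (and below $9\pi-1$). By \Cref{prop:contact}(2), $SH^{*,<4\pi+\delta}_+(W)\to H^{*+1}(W)\to H^0(W)$ is surjective. Since the unit of $H^*(W)$ spans $H^0(W)$, exactness of \eqref{eqn:SHexactsequence} at filtration level $4\pi+\delta$ shows that the unit $1\in H^*(W)$ maps to zero in $SH^{*,<4\pi+\delta}(W)$. By the diagram \eqref{eqn:CO}, $\CO_0(1)=1_L\in H^*(L)$, so the image of $1_L$ in $HW^{*,<4\pi+\delta}(L)$ is $\CO$ of zero, hence zero.

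We then use the module structure to kill everything. For a class $y\in HW^{*,<b}(L)$, we have $1_L\cdot y=y$, where $\mu_2$ with the unit is the continuation map (up to action shift). By \Cref{prop:moduleStructureDecomposition}, $\mu_2(\CO(\cdot),y)$ agrees with the module action. So $y=\mu_2(1_L,y)=\mu_2(\CO(1_W),y)$ vanishes after continuation into $HW^{*,<b+4\pi+\delta}(L)$. Choose $b\in[1,\,9\pi-1-4\pi-\delta)$, which is possible because $9\pi-1>4\pi+1$. Then both $b$ and $b+4\pi+\delta$ lie in $[1,9\pi-1)$, an action window with no Reeb chords. Hence the continuation map $HW^{*,<b}(L)\to HW^{*,<b+4\pi+\delta}(L)$ is an isomorphism, and therefore $HW^{*,<b}(L)=0$.

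Finally we compare Euler characteristics. From the tautological sequence \eqref{eqn:exact_L} at level $b$, we get $\chi(HW^{*,<b}_+(L))=-\chi(H^*(L))+\chi(HW^{*,<b}(L))\cdot(\pm1)$. Since $HW^{*,<b}(L)=0$ and $H^*(L)=H^*(\D^{n+1})$ has rank one in degree $0$, the long exact sequence gives $HW^{*,<b}_+(L)\cong H^{*+1}(L)$, so $\chi=-1$. By \Cref{prop:Euler}, $\chi(HW^{*,<b}_+(L))=(-1)^{\frac{n(n-1)}{2}}\tb(\Lambda)$. Hence $\tb(\Lambda)=(-1)^{\frac{n(n-1)}{2}+1}$, contradicting the hypothesis. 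This argument uses only the $\Z/2$-grading, which is available since $L$ is oriented.

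The main obstacle is the middle step. One must check that the filtered unit and product are compatible with continuation in the windows used, and that $\mu_2$ with the unit of $H^*(L)\to HW^{*,<\epsilon}(L)$ really acts as continuation. This needs $C^2$-small Hamiltonians, as in the remark above. A secondary point is verifying $\CO_0(1_W)=1_L$ and that the filtered closed-open map respects the action bounds, with $c\ge a+b$.
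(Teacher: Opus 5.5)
Your proposal is correct and is essentially the paper's argument: you glue $V\cup W_\Lambda$, use the left square of \eqref{eqn:CO} to kill the image of $1_L$ in $HW^{*,<4\pi+\delta}(L)$, combine the unit property of $\mu_2$ with the chord-free window $[1,9\pi-1)$ to get vanishing of filtered wrapped Floer cohomology, and finish with \Cref{prop:Euler}; your continuation from level $b$ to level $b+4\pi+\delta$ is just an explicit version of the paper's unital ring $HW^{*,<1}(L)\cong HW^{*,<2}(L)\cong HW^{*,<4\pi+\delta}(L)$. One small slip: surjectivity onto $H^0(W)$ in \Cref{prop:contact}(2) only shows that some class $1+x$ with $x\in\bigoplus_{i>0}H^i(W)$ dies in $SH^{*,<4\pi+\delta}(W)$, not necessarily $1$ itself (the paper quotes \cite[Proposition 3.2]{product} for the unit), but since $L$ is a disk you have $(1+x)|_L=1_L$, so your conclusion about $1_L$ still holds, exactly as in the paper's proof of \Cref{thm:vanishingCH}.
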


\begin{proof}
We argue by contradiction. Suppose that \((\bbS^{2n+1}_{+1}(\Lambda), \xi_{+1}(\Lambda))\) admits a Liouville
filling \(V\). Then \(V\cup W_{\Lambda}\) is a Liouville filling of \((\bbS^{2n+1},\xi_{\std})\). Let
\[
L\subset  V\cup W_\Lambda, \quad \partial L = \Lambda
\]
be the core Lagrangian disk of the surgery cobordism.

We assume that the contact form $\alpha$ on $\bbS^{2n+1}$ is as described in \Cref{prop:contact}. Then there are no Reeb chords of \(\Lambda\) with action in \([1,4\pi+\delta)\), for \(0<\delta\ll 1\). Hence
\[
HW^{*,<4\pi+\delta}(L)=HW^{*,<1}(L).
\]
While the filtered wrapped Floer cohomology product $\mu_2$ a priori maps \(HW^{*,<1}(L)\otimes HW^{*,<1}(L)\) to \(HW^{*,<2}(L)\), the action gap for Reeb chords  identifies \(HW^{*,<2}(L)\) with \(HW^{*,<1}(L)\). Thus the pair-of-pants product endows \(HW^{*,<1}(L)\) with the structure of a unital ring.

By \cite[Proposition 3.2]{product}, the map
\[
H^0(V\cup W_\Lambda)\longrightarrow
SH^{*,<4\pi+\delta}(V\cup W_\Lambda)
\]
is zero. Using the closed-open map of Equation  \eqref{eqn:CO}, the image of the unit \(1\in H^0(L)\) in \(HW^{*,<4\pi+\delta}(L)=HW^{*,<1}(L)\) is therefore zero. Since this image is the unit of the ring \(HW^{*,<1}(L)\), we conclude that
\[
HW^{*,<1}(L)=0.
\]
The tautological exact sequence of Equation \eqref{eqn:exact_L} then gives
\[
\chi\bigl(HW^{*,<1}_+(L)\bigr)=-\chi(H^{\ast}(L)) = -\chi(L) =-1,
\]
as \(L\) is a disk. On the other hand, by \Cref{prop:Euler},
\[
\chi\bigl(HW^{*,<1}_+(L)\bigr)
=
(-1)^{\frac{n(n-1)}{2}}\tb(\Lambda).
\]
Therefore
\[
\tb(\Lambda)=(-1)^{\frac{n(n-1)}{2}+1},
\]
contradicting the hypothesis.
\end{proof}

In dimension \(3\), Conway--Etnyre--Tosun \cite{CET} showed that \((\bbS^3_{+1}(\Lambda), \xi_{+1}(\Lambda))\) admits a Liouville filling if and only if \(\Lambda\) admits an exact Lagrangian disk filling, thereby implying \(\tb(\Lambda)=-1\). By working slightly harder and using the exactness of the surgery cobordism, one can also rule out strong fillings in the statement of \Cref{cor:nofilling}.\footnote{When \(\tb(\Lambda)\ne -1\), \(\bbS^3_{+1}(\Lambda)\) is a rational homology sphere, so there is no distinction between weak and strong fillings for this manifold.} The proof of \Cref{thm:vanishingCH} replaces geometric fillings by suitable algebraic fillings.

Since the proof of \Cref{cor:nofilling} uses the map \(\CO\) from \(SH^*(W)\), whose defining curves have only one positive interior puncture, the elements of the relevant moduli spaces obtained by neck-stretching along \((\bbS^{2n+1}_{+1}(\Lambda),\xi_{+1}(\Lambda))\) may degenerate into SFT buildings containing holomorphic planes in the hypothetical filling. To manage this bubbling of planes algebraically, we considers augmentation of the contact homology CDGAs, which we now recall.

\subsection{Contact homology and augmentations}\label{ss:CHreview}

We briefly recall contact homology \cite{Bao-Honda, EliGivHof, Pardon:Contact} and
augmentations. Let \((Y^{2N-1},\xi)\) be a contact manifold. Fix a contact form \(\alpha\) such that all Reeb orbits are nondegenerate. For each good Reeb orbit \(\gamma\), associate a formal variable \(q_\gamma\).\footnote{We defer to the citations for the definitions of good and bad Reeb orbits.} Let $\bm{k}$ be a field containing $\Q$, and let \(V\) denote the \(\Z/2\Z\)-graded \(\bm{k}\)-vector space generated by the \(q_\gamma\), with grading
\[
|q_\gamma|=\mu_{\CZ}(\gamma)+N-3.
\]
Counting rigid punctured holomorphic curves in the symplectization of $(Y, \alpha)$ with genus zero, one positive puncture, and any number of negative punctures gives degree \(-1\) operations
\[
\partial_k:V\to S^{k} V=(\otimes^{k} V)/\Sigma_k.
\]
Here $\Sigma_{k}$ is the group of permutations on $k$ letters. The sum
\begin{equation*}
\partial=\sum_{k=0}^{\infty}\partial_k,
\end{equation*}
extended to the graded-commutative tensor algebra
\[
SV=\bigoplus_{k=0}^{\infty}S^kV
\]
by the Leibniz rule
\[
\partial(ab)=\partial(a)b+(-1)^{|a|}a\partial(b),
\]
gives \((SV,\partial)\) the structure of a semi-free commutative differential graded algebra (CDGA). The CDGAs of contact homology are \emph{finitely filtered}, meaning that the vector space $V$ admits a filtration
\begin{equation*}
V^{0} \subset V^{1} \subset \cdots \subset V, \quad \cup_{i} V^{i} = V
\end{equation*}
so that for all $i$,
\begin{equation*}
\dim V^{i} < \infty, \quad \partial V^{i+1} \subset SV^{i}.
\end{equation*}
In practice we can take $V^{i}$ to be generated by the Reeb orbits of action $\leq L_{i}$ where $L_{i}$ are the actions of closed Reeb orbits, ordered so that $L_{i+1} > L_{i}$. Due to the presumed non-degeneracy of the contact form, each $V^{i}$ is then finitely generated.

More precisely, fix a \(\rd(e^s\alpha)\)-compatible \(s\)-invariant almost complex structure on \(\R_s\times Y\) such that
\[
\alpha\circ J=\rd s,
\]
i.e. \(J\) is cylindrically convex. For each simple Reeb orbit, fix a base point on its image. This determines a constant-speed parameterization of the Reeb orbit. Let \(\gamma\) be a good Reeb orbit, and let
\[
\Gamma=\{\gamma_1,\ldots,\gamma_m\}
\]
be an ordered multiset of good Reeb orbits. Define \(\cM_{\gamma,\Gamma}\) to be
the compactified moduli space
\[
\overline{
\left\{
\begin{array}{c}
u:\CP^1\setminus\{z_+,z_{1,-},\ldots,z_{m,-}\}\to \R_s\times Y\\
z_+,z_{1,-},\ldots,z_{m,-}\in\CP^1 \text{ pairwise distinct}\\
\theta_+\in ST_{z_+}\CP^1,\quad
\theta_{i,-}\in ST_{z_{i,-}}\CP^1
\end{array}
\left|
\begin{array}{c}
(\rd u)^{0,1}=0,\\
u(z_+)=\gamma,\\
u(z_{i,-})=\gamma_i
\end{array}
\right.
\right\}
/\mathrm{Aut}\times \R
}.
\]
Here \(u(z_+)=\gamma\) means that \(u\) converges to \((+\infty,\gamma)\) at the positive puncture \(z_+\), and that the asymptotic marker \(\theta_+\) maps to the chosen base point on the image of \(\gamma\). Similarly, \(u(z_{i,-})=\gamma_i\) means convergence to \((-\infty,\gamma_i)\) at the negative puncture \(z_{i,-}\), with the analogous condition on the asymptotic marker. The automorphism group consists of biholomorphisms of \(\CP^1\) that intertwine all of this data, and the \(\R\)-action is induced by translation in the symplectization coordinate.

The operation \(\partial_k\) is defined by
\[
\partial_k(q_\gamma)
=
\sum_{[\Gamma],\ |\Gamma|=k}
\frac{1}{\mu_\Gamma\kappa_\Gamma}
\#\cM_{\gamma,\Gamma}\,q^\Gamma,
\]
where
\[
q^\Gamma:=q_{\gamma_1}\cdots q_{\gamma_m}.
\]
The sum is over unordered multisets \([\Gamma]\) with \(k\) elements such that \(\cM_{\gamma,\Gamma}\) has virtual dimension zero. Here \(\Gamma\) is an ordered representative, \(\kappa_\Gamma\) is the product of the covering multiplicities of the orbits in \(\Gamma\), and \(\mu_\Gamma\) is the order of the isotropy group of \(\Gamma\) under reordering. Thus, if
\[
\Gamma=
\{\underbrace{\gamma_1,\ldots,\gamma_1}_{i_1},
\ldots,
\underbrace{\gamma_k,\ldots,\gamma_k}_{i_k}\},
\qquad \gamma_i\ne\gamma_j \text{ for } i\ne j,
\]
then
\[
\mu_\Gamma=i_1!\cdots i_k!.
\]
The count \(\#\cM_{\gamma,\Gamma}\) requires a choice of virtual perturbation scheme and associated auxiliary data, as in \cite{Bao-Honda,Pardon:Contact}. The finite filtration condition follows from the facts (1) that there are only finitely-many closed Reeb orbits under a given action bound and (2) that perturbed holomorphic maps $u$ contributing to the differential always have non-negative contact area, $\int u^{\ast}\rd\alpha \geq 0$.

We denote this CDGA by \(\CC(Y,\alpha,\Theta)\), where \(\Theta\) represents the
auxiliary data required by the chosen virtual setup. Its homology is called the
contact homology of \(Y\), and it is a contact invariant
\cite{Bao-Honda,Pardon:Contact}; that is, it depends only on \(\xi\).

\begin{remark}\label{rmk:finite}
Pardon's construction \cite{Pardon:Contact} produces a contact DGA generated by
all good Reeb orbits. In contrast, \cite{Bao-Honda} constructs filtered versions
\(\CC^{L_i}(Y,\alpha_i,\Theta_i)\), generated by Reeb orbits of action at most
\(L_i > 0\), for a sequence of contact forms \(\alpha_i\) that are only \(C^0\)-close
to one another. The cobordism maps
\[
\CC^{L_i}(Y,\alpha_i,\Theta_i)\to \CC^{L_j}(Y,\alpha_j,\Theta_j),
\qquad i<j,
\]
do not form a direct system of DGA maps on the chain level, but they do form a
direct system on homology. The contact homology is then defined as the direct
limit of these homologies. It follows that if the total contact homology vanishes, then the filtered contact homology vanishes for all sufficiently large thresholds. Similarly, the augmented filtered symplectic/wrapped Floer cohomology used in \S \ref{s4} and \ref{s5} requires an augmentation to the filtered contact algebra with a sufficiently high threshold.  Hence for our purposes, working with such filtered
versions is sufficient if one uses the virtual setup of \cite{Bao-Honda}. In
what follows, we suppress this technical distinction.
\end{remark}

If \(c_1(\xi)=0\), then after choosing a trivialization of \(\det_{\C}\xi\), the
Conley--Zehnder index \(\mu_{\CZ}(\gamma)\) is \(\Z\)-valued for every
nondegenerate Reeb orbit \(\gamma\). If the homology class of \(\gamma\) is
torsion, then \(\mu_{\CZ}(\gamma)\) is independent of the chosen trivialization.
In these cases, \(\partial\) has degree \(-1\) when \(q_\gamma\) is graded by
\(\mu_{\CZ}(\gamma)+N-3\in\Z\).

Let $\Algebra = (SV, \partial)$ be a semi-free CDGA. An \emph{augmentation} is a CDGA morphism
\[
\epsilon:\Algebra \to(\bm{k},0).
\]
Given an augmentation \(\epsilon\), define an algebra isomorphism
\(F_{-\epsilon}:SV\to SV\) by
\[
F_{-\epsilon}(x)=x-\epsilon(x),
\]
with inverse \(F_\epsilon\). Under this change of coordinates,
\[
F_\epsilon\circ \partial\circ F_{-\epsilon}
\]
is again induced by operations
\[
\partial_{\epsilon,k}:V\to S^kV,
\]
with the important property that \(\partial_{\epsilon,0}=0\). Consequently,
\(\partial_{\epsilon,1}\) is a differential on \(V\). The corresponding
linearized contact homology is
\[
\LCH_*(Y,\epsilon):=H_*(V,\partial_{\epsilon,1}).
\]
Although \(\LCH_*(Y,\epsilon)\) depends on the augmentation \(\epsilon\), the
collection of linearized contact homologies, as \(\epsilon\) ranges over all
augmentations, is a contact invariant by \cite{chaidez2024contact}. The
functorial foundation of linearized contact homology will be established in
\cite{DG} via chain-level DGA homotopies for contact homology.

Since the contact DGA of the empty set is the ground field \(\bm{k}\), counting
rigid holomorphic planes in a Liouville filling \(W\), together with the relevant
virtual perturbation data, gives an augmentation of the contact DGA by
\cite{Bao-Honda,Pardon:Contact}. If \(c_1(W)=0\), then after fixing a
trivialization of \(\det_{\C}TW\), one obtains a \(\Z\)-graded augmentation of
the contact DGA with respect to the induced trivialization of \(\det_{\C}\xi\). In practice, we take \(\bm{k}=\Q\) or \(\C\). The property \(\CH(Y,\xi)=0\) is independent of this choice.

\Cref{Lemma:aug}, below, will be important in our study of chain-level contact homology algebras. To prove it, we will need the following preliminary result.

\begin{lemma}\label{Lemma:ConstructibleIntersection}
Let $D_{1} \supset D_{2} \supset \cdots$ be a sequence of non-empty subsets of $\C^{N}$ which are constructible in the Zariski topology. Then $\cap_{j} D_{j} \neq \emptyset$.
\end{lemma}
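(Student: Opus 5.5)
The plan is to combine Noetherianity of the Zariski topology with Chevalley's theorem and the uncountability of $\C$. The idea is to reduce to a descending chain of Zariski-closed sets, which must stabilize, and then show that a constructible set cannot be exhausted by a countable descending chain without the intersection being nonempty.

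First I would replace each $D_j$ by its Zariski closure $\overline{D_j}$. These form a descending chain of closed subsets of $\C^N$, and this chain stabilizes by the Noetherian property (Hilbert basis theorem). However, stabilization of the closures does not by itself give a common point of the $D_j$, so a finer argument is needed.

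The refined argument proceeds by induction on dimension. Since each $D_j$ is constructible, write $D_j$ as a finite union of locally closed sets $Z \cap U$, with $Z$ irreducible closed and $U$ open dense in $Z$. Consider the pairs $(\dim \overline{D_j}, \text{number of top-dimensional irreducible components of } \overline{D_j})$. These are non-increasing in $j$, so they stabilize from some index $j_0$ onward. For $j \ge j_0$, fix a top-dimensional component $Z$ common to all $\overline{D_j}$.

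Each $D_j$ then contains a set $Z \cap U_j$ with $U_j$ nonempty Zariski-open in $Z$. The reason is that a constructible set whose closure contains the irreducible $Z$ as a component must contain a dense open subset of $Z$.

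It remains to show that $\bigcap_j (Z \cap U_j) \neq \emptyset$. This holds because a countable intersection of nonempty Zariski-open subsets of an irreducible complex variety is nonempty. Each complement $Z \setminus U_j$ is a proper closed subvariety, hence has empty interior and is nowhere dense in the analytic topology on $Z$. The Baire category theorem then applies to the locally compact space $Z$ and shows that $Z$ is not a countable union of such sets. This is where the hypothesis that the base field is $\C$, and in particular uncountable, is used essentially.

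Any point in $\bigcap_j (Z \cap U_j)$ lies in $\bigcap_j D_j$, which proves the lemma.

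The main obstacle is the middle step: extracting a single irreducible $Z$ that works for all $j$, and verifying that each $D_j$ contains a nonempty open piece of it. To handle this, I would use that if $\overline{D_j} \supset Z$ is a top-dimensional component, then $D_j \cap Z$ is constructible and dense in $Z$. A constructible dense subset of an irreducible variety contains a nonempty open subset of it.
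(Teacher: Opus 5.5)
Your proposal is correct and follows essentially the same route as the paper: Noetherian stabilization of the Zariski closures, the fact that a constructible set contains a Zariski-dense open subset of its closure, passage from Zariski density to density in the analytic topology, and the Baire category theorem. The only difference is that you first restrict to a common irreducible top-dimensional component $Z$, whereas the paper applies Baire directly on the possibly reducible $\overline{D}_{j_0}$; your restriction is harmless and makes the analytic-density step cleaner, though the dimension/component-count bookkeeping is unnecessary, since once the closures $\overline{D_j}$ stabilize every component is common to all of them.
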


\begin{proof}
Write $\overline{D}_{j}$ for the Zariski closure of $D_{j}$, for any $j$. Since $\C[x_{1}, \dots, x_{N}]$ is Noetherian and the $\overline{D}_{j}$ are closed and nested, there is some $j_{0}$ such that $\overline{D}_{j} = \overline{D}_{j_{0}}$ for $j \geq j_{0}$. Each $D_{j}$ contains a set of the form $\overline{D}_{j} \setminus Y_{j}$ where $Y_{j} \subset \overline{D}_{j}$ is closed and nowhere dense. Hence for $j \geq j_{0}$, the $Y_{j}$ are closed subsets of $\overline{D}_{j_{0}}$ with dense complements in the Zariski topology, and therefore in the standard topology, cf. \cite[Theorem 2.33]{Mumford}. As $\overline{D}_{j_{0}}$ is a closed subset of $\C^{N}$, the Baire category theorem (applied to the complements of the $Y_{j}$ using the standard topology) implies that $\overline{D}_{j_{0}} \setminus \cup_{j \geq j_{0}} Y_{j} \subset \cap_{j} D_{j}$ is non-empty.
\end{proof}

\begin{lemma}\label{Lemma:aug}
Let $\Algebra = (SV, \partial)$ be a finitely filtered, semi-free CDGA with $\C$ coefficients and $\Z/2\Z$ grading. Then $\Algebra$ admits an augmentation if and only if $H\Algebra \neq 0$.
\end{lemma}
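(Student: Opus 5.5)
The forward implication is formal. The converse reduces, by the Nullstellensatz, to a statement about each finite stage of the filtration, and the finite stages are then glued together using \Cref{Lemma:ConstructibleIntersection}.

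\emph{Augmentation implies $H\Algebra\neq 0$.} Since $H\Algebra$ is a unital ring, $H\Algebra=0$ if and only if $1=\partial x$ for some $x\in SV$. If $\epsilon$ is an augmentation, applying it gives $1=\epsilon(1)=\epsilon(\partial x)=0$, which is absurd. So $[1]\neq 0$.

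\emph{Augmentations of one finite stage.} Next I would describe augmentations concretely. Since $\epsilon$ preserves the $\Z/2\Z$-grading and $\C$ sits in even degree, $\epsilon$ kills every odd generator. It is therefore determined by its values on a basis $p_1,\dots,p_{N_i}$ of the even part of $V^i$. Each $\partial p$ is odd, so $\epsilon(\partial p)=0$ holds automatically. The only conditions are $\epsilon(\partial q)=0$ for odd generators $q$. Write $\overline{\partial q}\in\C[p_1,\dots,p_{N_i}]$ for the reduction of $\partial q$ modulo the ideal $(V_{\mathrm{odd}})$. Then the augmentations of the sub-CDGA $SV^i$ form the zero set
\[
Z_i=Z\bigl(J_i\bigr)\subset \C^{N_i},\qquad J_i=\bigl(\overline{\partial q}:q\in V^i_{\mathrm{odd}}\bigr).
\]
Here $SV^i$ is a sub-CDGA by the finite filtration condition.

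\emph{Key step: $[1]\neq 0$ in $H(SV^i)$ forces $Z_i\neq\emptyset$.} By the weak Nullstellensatz, it suffices to show that $J_i=(1)$ implies $1$ is exact in $SV^i$. Suppose $1=\sum_k g_k\,\overline{\partial q_k}$ with $g_k\in\C[p]$. Lifting this identity to $SV^i$ gives
\[
\partial\Bigl(\sum_k g_kq_k\Bigr)=\sum_k g_k\,\partial q_k+\sum_k \partial g_k\, q_k=1-n.
\]
Here $n$ lies in the ideal generated by the finitely many odd generators, because both the lifting error and the terms $\partial g_k\,q_k$ lie there. Hence $n$ is nilpotent. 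Moreover $n$ is a cycle, since $1-n$ is a boundary. It follows that $u=1+n+n^2+\cdots$ is a finite sum and a cycle, and that $1=(1-n)u$. A boundary times a cycle is a boundary, so $1$ is exact. Now observe that $[1]\neq 0$ in $H\Algebra$ implies $[1]\neq 0$ in each $H(SV^i)$, because an identity $1=\partial x$ with $x\in SV^i$ would also hold in $SV$. Therefore every $Z_i$ is nonempty.

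\emph{Passing to the limit.} This is the main subtlety, since augmentations of $SV^j$ restrict to augmentations of $SV^i$ for $i<j$, but not conversely. For $i\le j$, let $D_{i,j}\subset\C^{N_i}$ be the image of $Z_j$ under the coordinate projection. By Chevalley's theorem $D_{i,j}$ is constructible, and it is nonempty by the previous step. For fixed $i$ these sets are nested in $j$. Set $E_i=\bigcap_j D_{i,j}$; it is nonempty by \Cref{Lemma:ConstructibleIntersection}.

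I then claim that the projection $E_{i+1}\to E_i$ is surjective. Given $x\in E_i$, the sets $D_{i+1,j}\cap\pi^{-1}(x)$ are constructible and nested. They are nonempty, since $x\in D_{i,j}$ lifts to a point of $Z_j$, whose projection lies in $D_{i+1,j}$ over $x$. So \Cref{Lemma:ConstructibleIntersection} again gives a point in their intersection, and that point lies in $E_{i+1}$ over $x$.

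Finally, choose $x_0\in E_0$ and inductively choose $x_{i+1}\in E_{i+1}$ lying over $x_i$. Each $x_i$ lies in $E_i\subset D_{i,i}=Z_i$, so it is an augmentation of $SV^i$, and these augmentations are compatible under restriction. Their union defines an augmentation of $\Algebra$.
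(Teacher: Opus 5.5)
Your proposal is correct and follows the paper's proof essentially step for step. It applies the weak Nullstellensatz at each finite stage $SV^i$, and uses the nilpotent correction $u=\sum_k n^k$ (the paper's $\sum_j(-1)^{j+1}z^{j-1}$ with $z=-n$) to turn $\partial(\sum_k g_kq_k)=1-n$ into exactness of $1$. It then uses Chevalley's theorem and two applications of \Cref{Lemma:ConstructibleIntersection} to build a compatible tower of augmentations.
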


\begin{proof}
Clearly the existence of an augmentation implies the non-vanishing of homology. We first work out a finitely generated version of the desired result using the assumed filtration $V^{0} \subset V^{1} \subset \cdots \subset V$.

For each $i$, write $\Algebra^{i} = (SV^{i},\partial)$. Let $V^{i}_{ev}$ and $V^{i}_{odd}$ be the subspaces of $V^{i}$ of even and odd grading, respectively. We have an associated augmentation variety
\begin{equation*}
Z^{i} \subset \C^{N_{i}}, \quad N_{i} = \dim V^{i}_{ev}
\end{equation*}
with coordinate variables indexed by generators $v_{ev, j} \in V^{i}_{ev}$, defined as follows. Let
\begin{equation*}
P: \Algebra^{i} \rightarrow \C[V^{i}_{ev}]
\end{equation*}
be the algebra map which sends each generator $v_{odd} \in V^{i}_{odd}$ to zero, and preserves generators in $V^i_{ev}$. Then let
\begin{equation*}
\mathcal{I}^{i} \subset \C[V^{i}_{ev}]
\end{equation*}
be the ideal generated by the $P(\partial v_{odd})$, and $Z^{i}$ the variety associated to $\mathcal{I}^{i}$. The points
\begin{equation*}
\epsilon = (\epsilon_{1}, \cdots, \epsilon_{N_{i}}) \in Z^{i}
\end{equation*}
exactly correspond to the augmentations of $\Algebra^{i}$ by $\epsilon v_{ev, j} = \epsilon_{j}$. The following conditions are then equivalent:
\begin{enumerate}
\item $1 \in \mathcal{I}^{i}$.
\item $Z^{i} = \emptyset$.
\item $\Algebra^{i}$ does not admit an augmentation.
\end{enumerate}
The first equivalence follows from the weak Nullstellensatz and the second follows from the definition of an augmentation of $\Algebra^{i}$.

Suppose that $\Algebra^{i}$ does not admit an augmentation, so that $1 \in \mathcal{I}^{i}$. It follows that there are some $p_{j} \in \C[V^{i}_{ev}]$ such that
\begin{equation*}
1 = \sum p_{j}P(\partial v_{odd, j})
\end{equation*}
where the sum is taken over the $v_{odd, j} \in V^{i}_{odd}$. Since the $p_{j}$ are polynomials in the additive generators $v_{ev}$ of $V^{i}_{ev}$, each non-zero summand of $\partial p_{j}$ contains at least one generator $v_{odd} \in V^{i}_{odd}$ as a factor. Hence using the definition of $P$ and the Leibniz rule we obtain
\begin{equation*}
\partial(p_{j}v_{odd, j}) = (\partial p_{j})v_{odd, j} + p_{j}\partial v_{odd, j} \implies 1 = P(\partial x), \quad x = \sum p_{j} v_{odd,j}.
\end{equation*}

By the definition of $P$, we can write
\begin{equation*}
\partial x = 1 + z, \quad P(z) = 0
\end{equation*}
with $z$ being an even-degree element of $\Algebra^{i}$. Each summand of $z$ must have at least two $V^{i}_{odd}$ factors by degree considerations. Since squares of odd-degree generators vanish in $\Algebra$, there is some $m > 0$ for which $z^{m} = 0$. Moreover, for all $j > 0$
\begin{equation*}
\partial z^{j} =  j z^{j-1}\partial z = jz^{j-1}\partial^{2}x = 0.
\end{equation*}
It follows that
\begin{equation*}
\partial \sum_{j=1}^{m} (-1)^{j+1}z^{j-1} x = (1 + z) - z(1 + z) + \cdots + (-1)^{m-1}z^{m-1}(1 + z) = 1.
\end{equation*}
So we've established that if $\Algebra^{i}$ does not admit an augmentation for some $i > 0$ then the unit is exact in $\Algebra^{i}$ and hence $H\Algebra^{i} = 0$.

Now suppose that $H\Algebra \neq 0$. For each $i$, $H\Algebra^{i} \neq 0$ and hence $Z^{i} \neq \emptyset$ by our already-established results. Restrictions of augmentations define morphisms
\begin{equation*}
\pi_{i}: Z^{i+1} \to Z^{i},
\end{equation*}
given by projecting $V^{i+1}_{ev}$ to the subspace $V^{i}_{ev} \subset V^{i+1}_{ev}$. Augmentations of $\Algebra$ are exactly the sequences $(z_{i})_{i \geq 0}$ with $z_{i} \in Z^{i}$ and $\pi_{i}(z_{i+1}) = z_{i}$ for all $i \in \Z_{\geq 0}$.

Now we establish that
\begin{equation*}
Z^{\infty, i} = \cap_{I \geq i} Z^{I, i}
\end{equation*}
is non-empty for all $i$, where
\begin{equation*}
Z^{I, i} = \pi^{I,i}Z^{I} \subset \C^{N_{i}}, \quad \pi^{I, i} = \pi_{i}\cdots \pi_{I-1}.
\end{equation*}
By Chevalley's theorem (see \cite[Exercise II.3.19]{Hartshorne}), the $Z^{I,i}$ are constructible in the Zariski topology. They are each non-empty by assumption and nested, $Z^{I+1, i} \subset Z^{I,i}$.

Applying \Cref{Lemma:ConstructibleIntersection} to the $Z^{I,i}$, each $Z^{\infty,i}$ is non-empty. Next we show that
\begin{equation*}
\pi_{i}Z^{\infty, i+1} = Z^{\infty, i}.
\end{equation*}
The inclusion $\subset$ is clear. For the reverse inclusion, let $x \in Z^{\infty, i}$. For each $I > i$, the set
\begin{equation*}
F^x_{I} = \pi_{i}^{-1}(x) \cap Z^{I, i+1}
\end{equation*}
is constructible and non-empty, since $x \in Z^{I,i} = \pi_{i}Z^{I,i+1}$. The $F^x_{I}$ are also nested. By \Cref{Lemma:ConstructibleIntersection}, there is some $y \in \cap_{K} F^x_{K} \subset Z^{\infty, i+1}$, and $\pi_{i}(y) = x$.

Finally, choose any $z_{0} \in Z^{\infty, 0}$ and inductively choose $z_{i+1} \in Z^{\infty, i+1}$ with $\pi_{i}(z_{i+1}) = z_{i}$. The resulting sequence $(z_{i})$ defines an augmentation of $\Algebra$.
\end{proof}

In view of \Cref{rmk:finite}, for the purpose of this paper, the analogous statement for the action-filtered version of contact homology $\CH^{<L}(Y,\xi)$, generated by Reeb orbits of action $<L$, is sufficient.

\begin{remark}
It is important in \Cref{Lemma:aug} that the statement is only
\(\Z/2\Z\)-graded. The analogous statement with \(\Z\)-gradings does not hold in
general. For example, consider a DGA with generators \(x,y,z\) of degrees
\[
|x|=1,\qquad |y|=2,\qquad |z|=-2,
\]
and differential
\[
\partial x=1+yz,\qquad \partial y=\partial z=0.
\]
Then the homology is non-zero, but there is no \(\Z\)-graded augmentation.
\end{remark}
\section{Augmented symplectic cohomology and wrapped Floer cohomology}\label{s4}

To prove \Cref{thm:vanishingCH} we will follow the proof strategy of \Cref{cor:nofilling}, but using augmentations in place of Liouville fillings. The replacement of fillings with augmentations requires us to generalize the invariants described in \S \ref{SS3.1} and \S \ref{SS3.2}. We describe such generalizations in this section. The ideas presented here originate in work of Bourgeois--Oancea \cite{BO}.

Throughout this section, let \((W,\lambda)\) be a $2N$-dimensional Liouville cobordism from \(Y_-\) to \(Y_+\). We denote the completion of the cobordism by \(\widehat W\). Let \(L\subset W\) be an exact Lagrangian with connected positive Legendrian boundary \(\Lambda\subset Y_+\) and empty negative boundary. We assume that \(L\) is equipped with a relatively spin structure (which will allow us to coherently orient moduli spaces of holomorphic curves).

We assume that the contact forms \(\lambda|_{Y_\pm}\) are nondegenerate and that the Reeb chords of \(\Lambda\) are nondegenerate, at least up to the action thresholds relevant in each construction below. Fix an augmentation \(\epsilon\) of the contact homology algebra of \((Y_-,\lambda|_{Y_-})\), after choosing the auxiliary data required by the virtual perturbation setup. All moduli spaces below are defined using auxiliary data compatible with those defining \(\epsilon\); we suppress this dependence from the notation. Since we seek to leverage \Cref{Lemma:aug} in applications, we assume throughout that all (co)homology groups use $\C$ coefficients and $\Z/2\Z$ gradings.

\subsection{Augmented wrapped Floer cohomology \(HW^*(L,\epsilon)\)}

Fix a marked point on the image of every embedded Reeb orbit of \(Y_-\). Let
\([\Gamma]\) be a multiset of good Reeb orbits of \(Y_-\), and let
\[
\Gamma=\{\gamma_i\}_{1\le i\le m}
\]
be an ordered representative of \([\Gamma]\). Let \(H\) be a Hamiltonian as in
the definition of wrapped Floer cohomology, chosen to vanish on \(W\). Let
\(x,y\) be non-constant Hamiltonian chords in the positive end of \(W\). We
define the compactified moduli space
\begin{equation}\label{eqn:wrapped_aug}
\cM_{x,y,\Gamma}
=
\overline{
\left\{
\begin{array}{c}
u:\R_s\times [0,1]_t\setminus\{\zeta_1,\ldots,\zeta_m\}\to \widehat{W}\\
\text{asymptotic markers at } \zeta_i
\end{array}
\left|
\begin{array}{c}
\partial_su+J(\partial_tu-X_H)=0,\\
u(\cdot,0),u(\cdot,1)\in \widehat{L},\\
\displaystyle\lim_{s\to\infty}u(s,\cdot)=x,\\
\displaystyle\lim_{s\to-\infty}u(s,\cdot)=y,\\
u(\zeta_i)=\gamma_i
\end{array}
\right.
\right\}/\R}.
\end{equation}
Here the \(\zeta_i\) are distinct free interior negative punctures, and
\(u(\zeta_i)=\gamma_i\) means that \(u\) converges to
\((-\infty,\gamma_i)\) in the SFT sense, with the asymptotic marker mapping to
the fixed marked point on \(\Ima\gamma_i\). The compactification allows
Floer-type breaking at non-constant Hamiltonian chords and SFT breaking (contact homology breaking) at the
negative punctures. More precisely, as we fix the conformal structure on $\R\times [0,1]$, in addition to Floer breakings, we have (1) collision of marked points leading to a degeneration involving a rooted tree of holomorphic rational curves in $\widehat{Y}_-$, (2) the separation of marked points to infinity leading to a breaking at a Hamiltonian chord in the middle, (3) the development of holomorphic planes in $\widehat{Y}_-$, i.e.\ ``sphere bubbles'' in this context.

Let \((h,g)\) be a Morse--Smale pair on \(L\) such that the gradient of \(h\)
points outward along \(\partial L\). Let \(p\) be a critical point of \(h\), let
\(x\) be a non-constant Hamiltonian chord, and let \(\Gamma\) be an ordered
multiset of good Reeb orbits on \(Y_-\). We define
\[
\cM_{x,p,\Gamma}
=
\overline{
\left\{
\begin{array}{c}
u:\R_s\times [0,1]_t\setminus\{\zeta_1,\ldots,\zeta_m\}\to \widehat{W}\\
\text{asymptotic markers at } \zeta_i\\
\gamma:(-\infty,0]_s\to L
\end{array}
\left|
\begin{array}{c}
\partial_su+J(\partial_tu-X_H)=0,\\
\gamma'(s)+\nabla_g h=0,\\
u(\cdot,0),u(\cdot,1)\in \widehat{L},\\
\displaystyle\lim_{s\to\infty}u(s,\cdot)=x,\\
\displaystyle\lim_{s\to-\infty}u(s,\cdot)=\gamma(0)\in L^\circ\subset W^\circ,\\
\displaystyle\lim_{s\to-\infty}\gamma(s)=p,\\
u(\zeta_i)=\gamma_i
\end{array}
\right.
\right\}/\R}.
\]
Since the stable manifold of \(p\) is precompact in the interior of \(L\), the
negative end of \(u\) lands in the interior of \(W\), where the Floer equation
becomes the Cauchy--Riemann equation with totally real boundary condition.
Thus the negative boundary puncture is removable. The compactification allows
breaking at non-constant chords on the positive end, breaking of the Morse
trajectory, and SFT breaking at the negative punctures, as before.

Let \(C(L,H,\epsilon)\) be the \(\bm{k}\)-vector space generated by the
non-constant Hamiltonian chords \(x\) and the critical points \(p\) of \(h\). The
differential has three components:
\begin{eqnarray*}
\delta_0(p) &=& \text{the Morse differential};\\
\delta_+(x)
&=&
\sum_y\sum_{[\Gamma]}
\frac{1}{\mu_\Gamma\kappa_\Gamma}
\#\cM_{x,y,\Gamma}\,\epsilon(q^\Gamma)\,y;\\
\delta_{+,0}(x)
&=&
\sum_p\sum_{[\Gamma]}
\frac{1}{\mu_\Gamma\kappa_\Gamma}
\#\cM_{x,p,\Gamma}\,\epsilon(q^\Gamma)\,p.
\end{eqnarray*}
The sums are over zero-dimensional moduli spaces, counted using the chosen
virtual perturbation scheme. Here \(q^\Gamma=q_{\gamma_1}\cdots q_{\gamma_m}\).

The subspace \(C_0(L,H,\epsilon)\) generated by the critical points of \(h\) is
the Morse cochain complex of \(h\). The quotient complex
\(C_+(L,H,\epsilon)\) is generated by the non-constant Hamiltonian chords and has
differential \(\delta_+\). The total complex \(C(L,H,\epsilon)\) is the mapping
cone of \(\delta_{+,0}\).

\begin{proposition}\label{prop:delta^2}
The differential satisfies \(\delta^2=0\).
\end{proposition}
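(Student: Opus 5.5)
The plan is the standard one: identify the codimension-one boundary strata of the one-dimensional moduli spaces $\cM_{x,y,\Gamma}$ and $\cM_{x,p,\Gamma}$, and show that their weighted, $\epsilon$-evaluated counts are exactly the components of $\delta^2$ plus terms that cancel because $\epsilon$ is an augmentation. Since $C(L,H,\epsilon)$ is the mapping cone of $\delta_{+,0}$, $\delta^2=0$ is equivalent to three identities:
\[
\delta_0^2=0,\qquad \delta_+^2=0,\qquad \delta_0\circ\delta_{+,0}\pm\delta_{+,0}\circ\delta_+=0 .
\]
The first is classical Morse theory. For the other two I take the sum over $[\Gamma]$ of $\frac{1}{\mu_\Gamma\kappa_\Gamma}\epsilon(q^\Gamma)$ times the signed count of boundary points of the compactified one-dimensional spaces $\cM_{x,y,\Gamma}$ (respectively $\cM_{x,p,\Gamma}$). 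This sum vanishes, by the virtual perturbation scheme being compatible with the data defining $\epsilon$, as in \cite{Pardon:Contact,Bao-Honda}.

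\emph{Boundary strata of $\cM_{x,y,\Gamma}$.}
\begin{enumerate}
\item Floer breaking at an intermediate chord $z$. Here the orbit multiset splits as $\Gamma=\Gamma_1\sqcup\Gamma_2$, distributed over the two strips; the "marked points going to infinity" degeneration (2) is included in this case.
\item SFT breaking at the negative end. This is a level in $\R\times Y_-$ with one positive puncture at some $\gamma$ and negative punctures $\Gamma'$, glued below a strip whose orbit multiset is $\Gamma''\cup\{\gamma\}$. The collisions (1) are exactly this with several punctures entering one rooted tree, and the plane bubbles (3) are the case $\Gamma'=\emptyset$.
\end{enumerate}
After weighting, the first type contributes $\langle\delta_+^2x,y\rangle$. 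To see this I need the combinatorial identity that the weights $\frac{1}{\mu\kappa}$ and the products $\epsilon(q^{\Gamma_1})\epsilon(q^{\Gamma_2})=\epsilon(q^\Gamma)$ factor correctly over the ways of splitting an ordered multiset. The second type contributes a sum of terms of the form $\epsilon(\partial q_\gamma)$, which vanish since $\epsilon\circ\partial=0$. Here I must check that the gluing multiplicity $\kappa_\gamma$ of the matching orbit cancels against the $\kappa$ factors, the standard bookkeeping of contact homology \cite{Pardon:Contact}; the Leibniz rule then reassembles the terms into $\epsilon(\partial(q_\gamma)\,q^{\Gamma''})$.

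\emph{Boundary strata of $\cM_{x,p,\Gamma}$.} The same two types appear, namely breaking at a chord (giving $\delta_{+,0}\delta_+$) and SFT breaking (killed by the augmentation). In addition there is breaking of the Morse trajectory at a critical point (giving $\delta_0\delta_{+,0}$). I also rule out the remaining potential degeneration, where $\gamma(0)$ escapes toward $\partial L$ or the half-strip's removable end degenerates. This is excluded because $\nabla h$ points outward, so stable manifolds stay in a compact part of $L^\circ$, as in \cite[\S 2.1]{Zhou:ADC}.

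Orientation signs follow from coherent orientations, using that $L$ is relatively spin together with the orientations of contact homology moduli spaces. With the $\Z/2\Z$ grading these signs produce the sign in the cone differential. I expect the main obstacle to be the compactness and gluing analysis of the mixed stratum: an SFT building in $\widehat Y_-$ attached to a strip carrying a Hamiltonian perturbation, with free interior punctures. One has to check that these strata have the expected codimension one, with no further codimension-one phenomena such as boundary bubbling of disks on the exact Lagrangian $\widehat L$, which exactness rules out. One must also check that the virtual counts are chosen compatibly with those defining $\epsilon$. I would handle this by quoting Pardon's implicit-atlas framework, or Bao--Honda in the filtered setting of \Cref{rmk:finite}, rather than proving transversality directly.
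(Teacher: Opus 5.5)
Your proposal is correct and follows essentially the same argument as the paper: Floer breaking (including punctures escaping to infinity) produces $\delta_+^2$ and $\delta_{+,0}\circ\delta_+$, Morse breaking produces $\delta_0\circ\delta_{+,0}$, and SFT breaking in $\widehat{Y}_-$ (plane bubbles and puncture collisions) cancels because $\epsilon\circ\partial=0$. The one point the paper makes explicit that you only gesture at is the disk bubble formed when an interior negative puncture drifts to the boundary: such a disk carries a negative puncture at a Reeb orbit $\gamma$ of $Y_-$, and it is excluded because Stokes' theorem together with exactness of $\widehat L$ forces its nonnegative energy to equal $-\int_\gamma\lambda_-<0$.
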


\begin{proof}
We give the standard boundary-of-one-dimensional-moduli-spaces argument,
suppressing orientation signs and the details of the virtual perturbation
scheme; see \Cref{rmk:ana} below. It suffices to show
\[
\delta_+^2=0
\qquad\text{and}\qquad
\delta_{+,0}\circ\delta_+ + \delta_0\circ\delta_{+,0}=0.
\]

For \(\delta_+^2=0\), consider a compactified moduli space
\(\cM_{x,y,\Gamma}\) of virtual dimension \(1\). Its codimension-one boundary
has two types of contributions:
\begin{enumerate}
\item Floer breaking:
\[
\cM_{x,z,\Gamma_1}\times \cM_{z,y,\Gamma_2},
\qquad
\Gamma=\Gamma_1\sqcup \Gamma_2.
\]
\item SFT breaking at a negative puncture:
\[
\bigl(\cM_{x,y,\Gamma_1\sqcup\{\gamma\}}\times
\cM_{\gamma,\Gamma_2}\bigr)/(\Z/\kappa_\gamma),
\qquad
\Gamma=\Gamma_1\sqcup \Gamma_2.
\]
This comes from the previously mentioned ``bubbling'' of holomorphic planes in $\widehat{Y}_-$, as well as the collision of negative punctures.
\end{enumerate}
A priori, one might also have a boundary stratum in which an interior negative
puncture approaches the boundary, producing one of more disk bubbles
\[
\nu:(\D\setminus\{0\},\partial\D)\to(\widehat W,\widehat L)
\]
with an interior negative puncture asymptotic to a Reeb orbit \(\gamma\) on
\(Y_-\). Such bubbles do not occur. Indeed, write
\[
W_+=W\cup (1,\infty)\times Y_+,\qquad
W_-=(0,1)\times Y_-,
\]
and let \(\lambda_-=\lambda|_{Y_-}\). Then
\begin{equation}\label{eqn:diskbubble}
\int_{\nu^{-1}(W_+)}\nu^*\rd\lambda
+
\int_{\nu^{-1}(W_-)}\nu^*\rd\lambda_-
\end{equation}
is non-negative because \(\nu\) is \(J\)-holomorphic and \(J\) is compatible with
the relevant symplectic forms/contact forms. On the other hand, by Stokes'
theorem and the exactness of \(\widehat L\), the same quantity equals
\[
-\int_\gamma \lambda_-<0,
\]
a contradiction.

After weighting all negative punctures by the augmentation \(\epsilon\), the
SFT-breaking contributions cancel precisely by the augmentation equation for
\(\epsilon\), while the Floer-breaking contributions give \(\delta_+^2=0\). The
identity
\(\delta_{+,0}\circ\delta_+ + \delta_0\circ\delta_{+,0}=0\) is proved in the
same way, using the compactified one-dimensional moduli spaces
\(\cM_{x,p,\Gamma}\).
\end{proof}

The filtered versions are obtained by restricting to critical points and/or
non-constant Hamiltonian chords corresponding to Reeb chords of action less than
a fixed threshold \(a\). We obtain cohomology groups
\[
HW^*(L,\epsilon),\quad
HW^*_+(L,\epsilon),\quad
HW^{*,<a}(L,\epsilon),\quad
HW^{*,<a}_+(L,\epsilon),
\]
which fit into long exact sequences
\begin{equation}\label{eq:AugmentedWFLES}
\cdots \to H^*(L)
\to HW^{*,<a}(L,\epsilon)
\to HW^{*,<a}_+(L,\epsilon)
\to H^{*+1}(L)
\to \cdots,
\end{equation}
where \(0\le a\le \infty\).

One can also define these invariants using a Hamiltonian \(H\) that is
\(C^2\)-small on \(W\), vanishes near \(Y_-\) so that it extends by zero to the
negative cylindrical end, and has nondegenerate constant chords on \(L\).

\begin{remark}\label{rmk:ana}
We record several comments on the analytic foundations and invariance properties
of \(HW^*(L,\epsilon)\).
\begin{enumerate}
\item The open strata of \(\cM_{x,y,\Gamma}\) and \(\cM_{x,p,\Gamma}\),
consisting of smooth curves, can be arranged to be cut out transversely for
a generic almost complex structure, using the Hamiltonian perturbation and
somewhere-injectivity. However, the contact homology moduli spaces
\(\cM_{\gamma,\Gamma}\) on \(Y_-\) are not generally transversely cut out.
Therefore, one must use a virtual perturbation theory, such as Pardon's
virtual fundamental cycles \cite{Pardon:Contact} or the semi-global
Kuranishi charts of Bao--Honda \cite{Bao-Honda}, both for the contact
homology moduli spaces and for the augmented wrapped Floer moduli spaces.

\item Fix a contact form \(\alpha_-=\lambda|_{Y_-}\), an
\(\alpha_-\)-compatible almost complex structure \(J\) on the negative end,
and auxiliary data \(\theta\in\Theta_I(Y_-,\alpha_-,J)\). After extending
these data, one obtains \(HW^*(L,\epsilon)\). A priori, this cohomology
depends on the chosen extension data. Standard continuation maps and
homotopies of continuation maps, as in \cite[\S 10]{Pardon}, show that it
is independent of the extension data once \(\alpha_-\), \(J\), \(\theta\),
and the augmentation \(\epsilon\) are fixed. Removing dependence on
\(\alpha_-\), \(J\), and \(\theta\) requires a chain-level homotopy theory
for the contact homology DGA; this will be developed in \cite{DG}. For the
purposes of this paper, it is enough to work with \(HW^*(L,\epsilon)\)
relative to the fixed data defining \(\epsilon\).

\item Although the chain-level well-definedness and functoriality for contact homology are not established yet, the set of homotopy classes of augmentations is indeed a contact invariant by the work of Chaidez \cite{chaidez2024contact}. Hence the set of linearized contact homologies is a contact invariant. Consequently, the set of augmented wrapped Floer cohomologies associated to a Lagrangian $L$ (with augmentations varying) is an invariant following the arguments of \cite{chaidez2024contact}.

\item For computations, the virtual perturbation theory should satisfy the following compatibility property: whenever the compactified moduli space is transversely cut out, the virtual count agrees with the geometric signed count. In Pardon's framework for contact homology, this is \cite[Proposition 4.33]{Pardon:Contact}.

\item The same comments apply to all augmented constructions below.
\end{enumerate}
\end{remark}

The augmented wrapped Floer cohomology \(HW^*(L,\epsilon)\) carries a product
structure analogous to the usual wrapped product. Using a \(C^2\)-small
Hamiltonian on \(W\), consider the compactified moduli spaces
\begin{equation}
\cM_{x,y,z,\Gamma}
=
\overline{
\left\{
\begin{array}{c}
u:(\D_{2,1}\setminus\{\zeta_1,\ldots,\zeta_m\},\partial\D_{2,1})
\to(\widehat W,\widehat L)\\
\zeta_i\in \operatorname{Int}(\D_{2,1})\\
\text{asymptotic markers at } \zeta_i
\end{array}
\left|
\begin{array}{c}
(\rd u-X_H\otimes\beta)^{0,1}=0,\\
u(\partial\D_{2,1})\subset\widehat L,\\
u(z_{+,1})=x,\quad u(z_{+,2})=y,\\
u(z_-)=z,\quad u(\zeta_i)=\gamma_i
\end{array}
\right.
\right\}},
\label{eqn:mu_2_aug}
\end{equation}
where \(x,y,z\) are Hamiltonian chords and \(\Gamma=\{\gamma_i\}\) is an ordered
multiset of good Reeb orbits on \(Y_-\). Define
\[
\mu_2(x,y)
=
\sum_{z,[\Gamma]}
\frac{1}{\mu_\Gamma\kappa_\Gamma}
\#\cM_{x,y,z,\Gamma}\,\epsilon(q^\Gamma)\,z.
\]

\begin{proposition}\label{prop:product}
For \(a,b>0\), the product induces
\[
\mu_2:
HW^{*,<a}(L,\epsilon)\otimes HW^{*,<b}(L,\epsilon)
\to
HW^{*,<a+b}(L,\epsilon).
\]
For \(a,b\ll 1\) sufficiently small, this product agrees with the cup product structure on \(H^*(L)\). The image of $1 \in H^{0}(L)$ under the inclusion $H^{\ast}(L) \to HW^{*,<a}(L,\epsilon)$ is a two-sided unit, by the compatibility of $\mu_2$ with continuation maps.
\end{proposition}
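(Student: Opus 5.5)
The plan is to prove the three assertions of \Cref{prop:product} in order: the chain-level product is well defined, it reduces to the cup product for small slopes, and the image of $1$ is a unit. Each follows the pattern of \Cref{prop:delta^2}, with SFT breakings at the interior negative punctures absorbed by the augmentation equation.

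\textbf{Chain map.} First I would check that $\mu_2$ is a chain map $C(L,aH,\epsilon)\otimes C(L,bH,\epsilon)\to C(L,(a+b)H,\epsilon)$. I would use a $C^2$-small Hamiltonian on $W$ that vanishes near $Y_-$, so that the domain $\D_{2,1}$ carries the one-form $\beta$ with weights $a,b,a+b$ and $\rd\beta\le 0$. The proof is the standard analysis of the boundary of the one-dimensional moduli spaces $\cM_{x,y,z,\Gamma}$ from \eqref{eqn:mu_2_aug}. These boundaries consist of:
\begin{enumerate}
\item Floer breakings at the three boundary punctures, with $\Gamma$ split between the two levels; after weighting by $\epsilon(q^\Gamma)$ these give exactly the terms $\mu_2(\delta x,y)$, $\mu_2(x,\delta y)$ and $\delta\mu_2(x,y)$, using the augmented differential;
\item SFT breakings of the form $\cM_{x,y,z,\Gamma_1\sqcup\{\gamma\}}\times\cM_{\gamma,\Gamma_2}/(\Z/\kappa_\gamma)$; these cancel because $\epsilon\circ\partial=0$;
\item disk bubbles with one interior negative puncture on $\widehat L$; these are excluded by the energy argument \eqref{eqn:diskbubble}.
\end{enumerate}
The action filtration is preserved because $\rd\beta\le0$ and all curves in $\widehat Y_-$ have non-negative $\rd\lambda_-$-energy. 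Hence the image lies below $a+b$, and the operation descends to the filtered groups. Compatibility with continuation maps is the usual homotopy argument, in which $\beta$ is deformed through forms with $\rd\beta\le0$.

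\textbf{Small slopes.} Next I would treat $a,b\ll1$, below the minimal chord action and the minimal Reeb orbit action of $Y_\pm$. Then $HW^{*,<a}(L,\epsilon)=H^*(L)$, generated by constant chords. A curve with constant-chord asymptotics has energy of order $a+b$. Any negative puncture at $\gamma$ would force a negative contribution, since $\int\gamma^*\lambda_-$ is bounded below by the minimal period, exceeding $a+b$. So $\Gamma=\emptyset$ for all contributing curves. The count then reduces to the unaugmented one, which for $C^2$-small Hamiltonians is known to be the Morse (cup) product on $H^*(L)$, as in the standard PSS/small-Hamiltonian identification.

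\textbf{Unit.} Finally I would let $e\in HW^{*,<a}(L,\epsilon)$ denote the image of $1\in H^0(L)$. For $x\in HW^{*,<b}$, the class $\mu_2(e,x)$ lies in $HW^{*,<a+b}$. I want to show that $\mu_2(e,x)$ equals the continuation image of $x$. The approach is to let $a\to0$ and use a homotopy of domain data forgetting the first positive puncture, which is asymptotic to the minimum of the Morse function representing $1$. This gives a chain homotopy between $\mu_2(e,\cdot)$ and the continuation map; $\mu_2(\cdot,e)$ is handled symmetrically. Passing to the direct limit, or working in any window where continuation maps are isomorphisms, $e$ becomes a two-sided unit.

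\textbf{Main obstacle.} I expect the hardest step to be this forgetful homotopy in the presence of augmented negative punctures. One must check that, along the one-parameter family, the new boundary strata are again only Floer breakings, SFT breakings cancelled by $\epsilon$, and bubbles excluded by \eqref{eqn:diskbubble}. My first attempt would be to rerun the energy argument uniformly along the family.
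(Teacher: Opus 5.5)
Your proposal is correct and follows essentially the same route as the paper: the TQFT construction of \cite{Ritter1} with interior negative punctures weighted by $\epsilon$, SFT breakings cancelled by the augmentation equation, disk bubbles excluded by the exactness/energy argument of \Cref{prop:delta^2}, and the small-slope identification with the cup product via the energy estimate that forbids negative punctures. The one place you go beyond the paper is the unit. There you spell out the forgetful (cap-gluing) homotopy identifying $\mu_2(e,\cdot)$ with the continuation map, which the paper compresses into the phrase ``compatibility of $\mu_2$ with continuation maps''; your version is the standard way to make that phrase precise, since continuation-compatibility in the narrow sense plus the small-slope cup-product statement would only show that $e$ acts as a unit on the image of $H^*(L)$.
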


\begin{proof}
The proof follows the construction of TQFT operations in \cite{Ritter1}, except that curves are now allowed to have interior negative punctures asymptotic to Reeb orbits on \(Y_-\). These punctures are algebraically weighted by the augmentation \(\epsilon\). As in the proof of \Cref{prop:delta^2}, interior negative punctures cannot escape to the boundary, by the exactness/energy argument above. The codimension-one SFT-breaking contributions cancel by the augmentation equation, and the remaining boundary strata give the \(A_\infty\)-type relation showing that \(\mu_2\) is a chain map.

For the last claim, consider the energy of a curve \(u\) contributing to \eqref{eqn:mu_2_aug}:
\[
\int_{u^{-1}(W_+)}
\frac{1}{2}|\rd u-X_H\otimes\beta|^2
+
\int_{u^{-1}(W_-)}u^*\rd\lambda_-,
\]
where \(W_+=W\cup(1,\infty)\times Y_+\), \(W_-=(0,1)\times Y_-\), and \(\lambda_-=\lambda|_{Y_-}\). This quantity is non-negative. By Stokes' theorem and the condition \(\rd\beta\le 0\), it is bounded above by
\[
\cA_{(a+b)H}(z)-\cA_{aH}(x)-\cA_{bH}(y)
-\sum_{\gamma_i\in\Gamma}\int_{\gamma_i}\lambda_-.
\]
If \(a,b\) are sufficiently small, then all relevant Hamiltonian chords are constant chords and their actions are smaller than the minimal period of a Reeb orbit on \(Y_-\). Therefore no negative punctures can occur. The operation then reduces to the usual product for exact Lagrangian fillings, which is the cup product on \(H^*(L)\).
\end{proof}

As in the proof of \Cref{prop:Euler}, we obtain the following Euler
characteristic computation.

\begin{proposition}\label{prop:Euler1}
In the setup of \Cref{prop:contact}, suppose that the positive boundary is
\((\bbS^{2n+1},\xi_{\std})\), that \(L\) has positive Legendrian boundary
\(\Lambda\), and that \(1\le a<9\pi-1\). Then
\[
\chi\bigl(HW^{*,<a}_+(L,\epsilon)\bigr)
=
(-1)^{\frac{n(n-1)}{2}}\tb(\Lambda).
\]
\end{proposition}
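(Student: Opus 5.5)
The argument runs parallel to the proof of \Cref{prop:Euler}: the Euler characteristic of $HW^{*,<a}_+(L,\epsilon)$ equals the Euler characteristic of the underlying $\Z/2\Z$-graded cochain complex $C_+(L,H_a,\epsilon)$. This holds because the complex is finite-dimensional, which follows from the nondegeneracy of the Reeb chords of $\Lambda$ below action $a$. The key observation is that the augmentation $\epsilon$ only alters the \emph{differential} $\delta_+$, through the weights $\epsilon(q^\Gamma)$. It does not alter the set of generators or their gradings. Hence $\chi$ can be computed purely from the generators, exactly as in the unaugmented case.

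The steps, in order, are as follows.

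First, the generators of $C_+(L,H_a,\epsilon)$ are the non-constant Hamiltonian chords of $H_a$ on the positive end, which correspond to Reeb chords of $\Lambda\subset(\bbS^{2n+1},\alpha)$ of action less than $a$. By \Cref{prop:contact}(3) and $1\le a<9\pi-1$, these are precisely the chords of action less than $1$. Those chords lie in the Darboux chart and are in bijection with the double points of the Lagrangian projection.

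Second, the $\Z/2\Z$-grading of a generator $\overline{c}$ is the same as in the unaugmented complex. It is determined by the Conley--Zehnder index of $c$ together with the orientation of $L$, so $|c|_{\SFT}=n-|\overline{c}|-1$ still holds. The one point to check is that the mod $2$ grading convention in the cobordism setting, where $L$ lives in $W$ rather than in a filling, agrees with the one used in \Cref{prop:Euler}. Since $L$ is oriented, the mod $2$ degree of a chord is computed locally near $\Lambda$ in $Y_+$ from the orientation data, so this agreement is expected.

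Third, combining these with \cite[Proposition 3.3]{EES} as in \Cref{prop:Euler} gives
\[
\chi = \sum_c (-1)^{|\overline{c}|} = (-1)^{\frac{n(n-1)}{2}}\tb(\Lambda).
\]

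The only step requiring care is the second one, namely that the grading is intrinsic and independent of $\epsilon$ and of $W$. The main obstacle is making sure that the virtual counts with negative punctures are degree-compatible. This means checking that $\delta_+$ still has odd degree mod $2$ once orbit punctures weighted by $\epsilon$ are included. It follows because $\epsilon$ is a graded (degree-zero) map, so only even-degree monomials $q^\Gamma$ contribute, and the index formula then shifts the dimension count by an even amount. With this, $\delta_+$ is an odd differential, and $\chi$ of the cohomology equals $\chi$ of the chain complex.
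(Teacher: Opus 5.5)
Your proposal is correct and matches the paper's proof. The action gap of \Cref{prop:contact} reduces the generators of the finite-dimensional complex $C_+^{<a}(L,H,\epsilon)$ to the chords of action less than $1$, the augmentation changes only the differential, and the grading computation of \Cref{prop:Euler} via \cite[Proposition 3.3]{EES} then gives the formula; your additional check that $\delta_+$ stays odd, because the degree-zero map $\epsilon$ is supported on even-degree monomials $q^\Gamma$, is a valid detail the paper leaves implicit.
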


\begin{proof}
The filtered complex \(C_+^{<a}(L,H,\epsilon)\) is generated by the non-constant
Hamiltonian chords corresponding to Reeb chords of \(\Lambda\) of action less
than \(a\). Since \(1\le a<9\pi-1\), the action gap in \Cref{prop:contact}
implies that these are precisely the Reeb chords of action less than \(1\).
Thus the Euler characteristic of the cohomology equals the Euler characteristic
of this finite-dimensional chain complex. The same grading computation as in
\Cref{prop:Euler}, using \cite[Proposition 3.3]{EES}, gives
\[
\chi\bigl(HW^{*,<a}_+(L,\epsilon)\bigr)
=
(-1)^{\frac{n(n-1)}{2}}\tb(\Lambda).
\]
\end{proof}

\subsection{Augmented symplectic cohomology of the cobordism \(SH^*(W,\epsilon)\)}

Let \((f,g)\) be a Morse--Smale pair on \(W\) such that the gradient of \(f\)
points outward along both boundary components of \(W\). Let \(p\) be a critical
point of \(f\), let \(x,y\) be non-constant Hamiltonian orbits, and let
\(\Gamma\) be an ordered multiset of good Reeb orbits on \(Y_-\). Define
\begin{equation}\label{eqn:partial}
\cM_{x,y,\Gamma}
=
\overline{
\left\{
\begin{array}{c}
u:\R_s\times\bbS^1_t\setminus\{\zeta_1,\ldots,\zeta_m\}\to\widehat W\\
\text{asymptotic markers at } \zeta_i
\end{array}
\left|
\begin{array}{c}
\partial_su+J(\partial_tu-X_H)=0,\\
\displaystyle\lim_{s\to\infty}u(s,\cdot)=x,\\
\displaystyle\lim_{s\to-\infty}u(s,\cdot)=y,\\
u(\zeta_i)=\gamma_i
\end{array}
\right.
\right\}/\R},
\end{equation}
and
\[
\cM_{x,p,\Gamma}
=
\overline{
\left\{
\begin{array}{c}
u:\R_s\times\bbS^1_t\setminus\{\zeta_1,\ldots,\zeta_m\}\to\widehat W\\
\text{asymptotic markers at } \zeta_i\\
\gamma:(-\infty,0]\to W
\end{array}
\left|
\begin{array}{c}
\partial_su+J(\partial_tu-X_H)=0,\\
\gamma'(s)+\nabla f=0,\\
\displaystyle\lim_{s\to\infty}u(s,\cdot)=x,\\
\displaystyle\lim_{s\to-\infty}u(s,\cdot)=\gamma(0)\in W^\circ,\\
\displaystyle\lim_{s\to-\infty}\gamma(s)=p,\\
u(\zeta_i)=\gamma_i
\end{array}
\right.
\right\}/\R}.
\]
Since the stable manifold of \(p\) is precompact and contained in \(W^\circ\),
the negative end of \(u\) lands in the interior of \(W\), where the Floer
equation becomes the Cauchy--Riemann equation with a removable singularity.

Let \(C(H,\epsilon)\) be the cochain complex generated by critical points of
\(f\) and non-constant Hamiltonian orbits. Define its differential by
\begin{eqnarray*}
\delta_0(p) &=& \text{the Morse differential};\\
\delta_+(x)
&=&
\sum_y\sum_{[\Gamma]}
\frac{1}{\mu_\Gamma\kappa_\Gamma}
\#\cM_{x,y,\Gamma}\,\epsilon(q^\Gamma)\,y;\\
\delta_{+,0}(x)
&=&
\sum_p\sum_{[\Gamma]}
\frac{1}{\mu_\Gamma\kappa_\Gamma}
\#\cM_{x,p,\Gamma}\,\epsilon(q^\Gamma)\,p.
\end{eqnarray*}
Then \(C_0(H,\epsilon)\), generated by critical points, is the Morse cochain
complex of \(W\), while \(C_+(H,\epsilon)\), generated by non-constant orbits, is
the quotient complex with differential \(\delta_+\). The same argument as in
\Cref{prop:delta^2} shows that \(\delta^2=0\).\footnote{Floer breaking cannot occur at Reeb orbits in the negative end of the cobordism, only at non-constant Hamiltonian orbits. This is because the lower component of a configuration of broken curves will have non-positive energy coming from the symplectic action, since $H=0$ on the negative end. The SFT breaking is as before.} Hence we obtain augmented symplectic cohomology groups
\[
SH^{*,<a}(W,\epsilon),
\qquad
SH^{*,<a}_+(W,\epsilon),
\]
fitting into the long exact sequence
\begin{equation}\label{eq:AugmentedSHLES}
\cdots \to H^*(W)
\to SH^{*,<a}(W,\epsilon)
\to SH^{*,<a}_+(W,\epsilon)
\stackrel{\delta_{+,0}}{\longrightarrow}
H^{*+1}(W)
\to \cdots .
\end{equation}

\begin{remark}\label{rmk:ring}
The group \(SH^*(W,\epsilon)\) is not a ring. It should be viewed as one part
of the symplectic cohomology of a cobordism in the sense of
Cieliebak--Oancea \cite{CieOan}.\footnote{Cieliebak--Oancea use symplectic
homology conventions. With our grading convention, their symplectic homology is
identified with \(SH^{N-*}\), without dualizing.} The missing piece is dual to
the positive symplectic cohomology of the negative boundary. The full
cobordism theory has a ring structure and generalizes Rabinowitz Floer
cohomology, which corresponds to the trivial cobordism.
\end{remark}

We will use the composition
\[
\delta_{\partial_+}:
SH_+^{*,<a}(W,\epsilon)
\longrightarrow H^{*+1}(W)
\longrightarrow H^{*+1}(Y_+).
\]
This map has the following direct description. Fix a Morse function \(h_+\) on
\(Y_+\), together with a generic metric. For a Hamiltonian orbit \(x\) and a
critical point \(p\) of \(h_+\), define
\begin{equation}\label{eqn:partial_+}
\cM^{\partial_+}_{x,p,\Gamma}
=
\overline{
\left\{
\begin{array}{c}
u:\R_s\times\bbS^1_t\setminus\{\zeta_1,\ldots,\zeta_m\}\to\widehat W\\
\text{asymptotic markers at } \zeta_i\\
\gamma:(-\infty,0]\to Y_+
\end{array}
\left|
\begin{array}{c}
\partial_su+J(\partial_tu-X_H)=0,\\
\gamma'(s)+\nabla h_+=0,\\
\displaystyle\lim_{s\to\infty}u(s,\cdot)=x,\\
\displaystyle\lim_{s\to-\infty}u(s,\cdot)=\gamma(0)\in Y_+\subset\widehat W,\\
\displaystyle\lim_{s\to-\infty}\gamma(s)=p,\\
u(\zeta_i)=\gamma_i
\end{array}
\right.
\right\}/\R}.
\end{equation}
Then the coefficient of \(p\) in \(\delta_{\partial_+}(x)\) is
\[
\sum_{[\Gamma]}
\frac{1}{\mu_\Gamma\kappa_\Gamma}
\epsilon(q^\Gamma)\,
\#\cM^{\partial_+}_{x,p,\Gamma}.
\]
This description follows by the same argument as
\cite[Proposition 3.2]{Zhou:ADC}.

\subsection{The \(\CO\) map for augmented symplectic cohomology}

We now adapt the closed-open map from \S \ref{ss:3.3} to Liouville cobordisms with
augmentations of the chain-level contact homology algebras of their negative ends. Fix a Morse function \(f\) on \(W\) such that the gradient of \(f\) is transverse to both boundary components and points outward along them. In a Weinstein neighborhood of \(L\), assume
\[
f=\pi^*h+\varepsilon\rho^2,
\]
where \(\varepsilon>0\), \(\pi:T^*L\to L\), and \(\rho\) is the length function
associated to a bundle metric on \(T^*L\). With a suitable metric on \(W\) that
has the corresponding splitting near \(L\), the critical points on \(L\) define
a quotient complex of the Morse cochain complex on \(W\), canonically
identified with the Morse cochain complex of \(h\) on \(L\). This quotient map
induces the restriction map \(H^*(W)\to H^*(L)\).

Define the compactified moduli spaces
\[
\cM^{\CO}_{x,y,\Gamma}
:=
\overline{
\left\{
\begin{array}{c}
u:(\D_{0,1|1,0}\setminus\{\zeta_1,\ldots,\zeta_m\},
\partial\D_{0,1|1,0})
\to(\widehat W,\widehat L)\\
\text{asymptotic markers at } \zeta_i
\end{array}
\left|
\begin{array}{c}
(\rd u-X_H\otimes\beta)^{0,1}=0,\\
u(\partial\D_{0,1|1,0})\subset\widehat L,\\
\displaystyle\lim_{s\to\infty}u=x,\quad
\displaystyle\lim_{s\to-\infty}u=y,\\
u(\zeta_i)=\gamma_i
\end{array}
\right.
\right\}/\R},
\]
where \(x\) is a non-constant Hamiltonian orbit of \(aH\), \(y\) is a
non-constant Hamiltonian chord of \(bH\), \(s\) is the cylindrical coordinate
near the punctures, and the \(\zeta_i\) are interior negative punctures. Also
define
\[
\cM^{\CO}_{x,q,\Gamma}
:=
\overline{
\left\{
\begin{array}{c}
u:(\D_{0,1|1,0}\setminus\{\zeta_1,\ldots,\zeta_m\},
\partial\D_{0,1|1,0})
\to(\widehat W,\widehat L)\\
\text{asymptotic markers at } \zeta_i\\
\gamma:(-\infty,0]\to L
\end{array}
\left|
\begin{array}{l}
(\rd u-X_H\otimes\beta)^{0,1}=0,\\
\partial_s\gamma+\nabla h=0,\\
u(\partial\D_{0,1|1,0})\subset\widehat L,\\
\displaystyle\lim_{s\to-\infty}\gamma=q,\\
\displaystyle\lim_{s\to-\infty}u=\gamma(0),\\
\displaystyle\lim_{s\to\infty}u=x,\\
u(\zeta_i)=\gamma_i
\end{array}
\right.
\right\}/\R},
\]
where \(x\) is a non-constant Hamiltonian orbit of \(aH\), and \(q\) is a
critical point of \(h\). Since the stable manifold of \(q\) is precompact in the
interior of \(L\), where the Hamiltonian vanishes, the negative boundary
puncture of \(u\) is removable, and the condition
\(\lim_{s\to-\infty}u=\gamma(0)\) is well-defined.

The map
\[
\CO:C(H,\epsilon)\to C(L,H,\epsilon)
\]
is the sum of three components. The first,

\[
\CO_+:C_+(H,\epsilon)\to C_+(L,H,\epsilon),
\]
is defined by
\[
\CO_+(x)
=
\sum_y\sum_{[\Gamma]}
\frac{1}{\mu_\Gamma\kappa_\Gamma}
\epsilon(q^\Gamma)\,
\#\cM^{\CO}_{x,y,\Gamma}\,y.
\]
The second,
\[
\CO_0:C_0(H,\epsilon)\to C_0(L,H,\epsilon),
\]
is the quotient map from the Morse cochain complex of \(f\) to that of \(h\).
The third,
\[
\CO_{+,0}:C_+(H,\epsilon)\to C_0(L,H,\epsilon),
\]
is defined by
\[
\CO_{+,0}(x)
=
\sum_q\sum_{[\Gamma]}
\frac{1}{\mu_\Gamma\kappa_\Gamma}
\epsilon(q^\Gamma)\,
\#\cM^{\CO}_{x,q,\Gamma}\,q.
\]

As in the unaugmented case, these maps give the following compatibility between
restriction and the closed-open map.

\begin{proposition}\label{prop:CO}
For \(a>0\), the following diagram commutes on cohomology:
\[
\xymatrix{
H^*(W) \ar[r]\ar[d] &
H^*(L)\ar[d]\\
SH^{*,<a}(W,\epsilon) \ar[r]^{\CO} &
HW^{*,<a}(L,\epsilon).
}
\]
\end{proposition}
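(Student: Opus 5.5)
The diagram commutes already at the chain level once the Morse data are chosen compatibly, so I will prove it there. The left vertical map \(H^*(W)\to SH^{*,<a}(W,\epsilon)\) is induced by the inclusion of the subcomplex \(C_0(H,\epsilon)\), the Morse complex of \(f\), into the cone \(C(H,\epsilon)\). The right vertical map is induced by the inclusion of \(C_0(L,H,\epsilon)\), the Morse complex of \(h\), into \(C(L,H,\epsilon)\).

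Take a critical point \(p\) of \(f\), viewed as a cochain in \(C_0(H,\epsilon)\subset C(H,\epsilon)\). By definition, \(\CO\) is the sum \(\CO_++\CO_0+\CO_{+,0}\). The components \(\CO_+\) and \(\CO_{+,0}\) are only fed by non-constant Hamiltonian orbits, so they vanish on \(C_0(H,\epsilon)\). Hence \(\CO(p)=\CO_0(p)\), the image of \(p\) under the quotient map from the Morse complex of \(f\) to that of \(h\). This quotient map is set up using the splitting \(f=\pi^*h+\varepsilon\rho^2\) near \(L\) and induces the restriction \(H^*(W)\to H^*(L)\). So on \(C_0\) we have \(\CO\circ\iota_W=\iota_L\circ\CO_0\) on the nose, and passing to cohomology gives the square.

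The substantive input is that \(\CO\) is a chain map \(C(H,\epsilon)\to C(L,H,\epsilon)\). Otherwise the chain-level identity would not descend to a commutative square on cohomology. In block form with respect to the cone decompositions, the chain-map property amounts to three identities.
\begin{enumerate}
\item \(\CO_0\) commutes with the Morse differentials. This holds by the choice of metric adapted to the splitting near \(L\): the critical points on \(L\) span a quotient complex.
\item \(\CO_+\circ\delta_+=\delta_+\circ\CO_+\).
\item The mixed identity \(\CO_{+,0}\circ\delta_++\delta_0\circ\CO_{+,0}=\CO_0\circ\delta_{+,0}+\delta_{+,0}\circ\CO_+\), up to signs.
\end{enumerate}

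Identities (2) and (3) are the main work. I will prove them by analyzing the boundaries of the one-dimensional compactified moduli spaces \(\cM^{\CO}_{x,y,\Gamma}\) and \(\cM^{\CO}_{x,q,\Gamma}\), following the argument of \Cref{prop:delta^2}. The boundary strata are as follows.
\begin{itemize}
\item Floer breaking at the interior positive end or at the boundary negative end. These give the compositions with \(\delta_+\).
\item Breaking of the gradient half-trajectory, or the removable negative end sliding into a Morse flow line. These give the \(\delta_0\), \(\delta_{+,0}\) and \(\CO_0\) terms.
\item SFT breaking at the interior negative punctures in \(\widehat Y_-\). These cancel after weighting by \(\epsilon\), by the augmentation equation \(\epsilon\circ\partial=0\).
\item Disk bubbles carrying an interior negative puncture. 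These are excluded by the energy/exactness argument of \eqref{eqn:diskbubble}.
\end{itemize}
The cancellation of the SFT-breaking strata requires the combinatorial factors \(1/(\mu_\Gamma\kappa_\Gamma)\) to match those in \(\partial\). This is exactly as in \Cref{prop:delta^2}.

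Two points need genuine care.
\begin{itemize}
\item For the mixed identity (3), I must ensure there are no extra boundary strata where a \(\CO\) curve with a removable negative boundary end degenerates through \(\partial W\). I will handle this as in \cite[\S 2.1]{Zhou:ADC}: the gradient of \(f\) points outward along \(\partial W\), so the stable manifolds are precompact in the interior.
\item The virtual counts must be taken with perturbation data compatible with those defining \(\epsilon\), per \Cref{rmk:ana}.
\end{itemize}

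Finally, the unaugmented version of this chain-level commutativity is the left square of \eqref{eqn:CO}. The augmented version differs only by interior negative punctures, which are handled by the mechanism above, so I expect no new difficulty beyond bookkeeping.
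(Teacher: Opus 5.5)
Your proposal is correct and follows the same route as the paper, whose proof just refers back to the unaugmented diagram \eqref{eqn:CO}. There the square commutes on the nose at chain level, because $\CO$ restricted to the Morse subcomplex is the quotient map $\CO_0$ inducing restriction, and the chain-map property of $\CO=\CO_++\CO_0+\CO_{+,0}$ comes from the usual boundary analysis of one-dimensional $\CO$ moduli spaces, with SFT breakings cancelled by the augmentation equation and disk bubbles excluded by exactness; your write-up only makes explicit the block identities and boundary strata that the paper leaves implicit.
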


The proof here is the same as that of Equation \eqref{eqn:CO}.

\section{Proof of the main results}\label{s5}

Having established the augmented versions of wrapped Floer and symplectic cohomologies in \S \ref{s4}, we are now ready to establish the proofs of our main results.

\subsection{Proof of \Cref{thm:vanishingCH}}

We first record the algebraic analogue of the key property of the contact form $\alpha$ constructed in \Cref{prop:contact}. Choose \(0<\delta\ll 1\) so that
\[
4\pi+\delta<9\pi-1.
\]
For a Liouville filling \(W\) of \((\bbS^{2n+1},\xi_{\std})\), the proof of \cite[Theorem 1.1]{product}, or equivalently \cite[Proposition 2.11]{Zhou:RP2n-1}, shows that the image of the composition
\begin{equation}\label{eq:SHCompositionCoversUnit}
SH^{*,<4\pi+\delta}_+(W)
\longrightarrow H^{*+1}(W)
\longrightarrow H^{*+1}(\bbS^{2n+1})
\end{equation}
contains the unit \(1\in H^0(\bbS^{2n+1})\). This is one of the ingredients in the symplectic cohomology proof of the Eliashberg--Floer--McDuff theorem \cite{McDuff} appearing in \cite[Theorem 1.2]{product}.

The neck-stretching arguments of \cite{product, Zhou:RP2n-1} apply when the filling is replaced by an exact cobordism together with an augmentation of the chain-level contact homology algebra of the negative end.

\begin{proposition}\label{prop:sphere}
Let \(W\) be a Liouville cobordism from a contact manifold \(Y_-\) to
\((\bbS^{2n+1},\xi_{\std})\). Let \(\epsilon\) be an augmentation of the contact homology algebra of \(Y_-\), defined by fixing geometric and perturbative data as in \cite{Bao-Honda, Pardon:Contact}. Assume that the Liouville form on \(W\), restricted to the positive boundary \(\bbS^{2n+1}\) of the cobordism, is the contact form $\alpha$ of \Cref{prop:contact}. Then the map
\[
\delta_{\partial_+}:
SH^{*,<4\pi+\delta}_+(W,\epsilon)
\longrightarrow H^{*+1}(\bbS^{2n+1})
\]
has image containing \(1\in H^0(\bbS^{2n+1})\).
\end{proposition}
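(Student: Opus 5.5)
The plan is to exhibit an explicit cocycle in $SH^{*,<4\pi+\delta}_+(W,\epsilon)$ built from the shortest Reeb orbit $\gamma_{\min}$ of $\alpha$, and to show that $\delta_{\partial_+}$ sends it to $1\in H^0(\bbS^{2n+1})$. We do this by neck-stretching along the positive boundary, so that the relevant curve counts reduce to counts computed purely in the symplectization of $(\bbS^{2n+1},\alpha)$ and in the trivial filling. Following \cite{product, Zhou:RP2n-1}, we perturb the Morse--Bott $\bbS^1$-family of Hamiltonian orbits at $\gamma_{\min}$ into the two generators $\check{\gamma}_{\min}$ and $\hat{\gamma}_{\min}$. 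By \Cref{prop:contact}(1), no other Reeb orbit of $\alpha$ has period $<4\pi+\delta$, so the complex $C^{<4\pi+\delta}_+(H,\epsilon)$ is generated by exactly these two orbits. The first step is to check that $\check{\gamma}_{\min}$ is a cocycle. Its coefficient on $\hat{\gamma}_{\min}$ is computed by cylinders of energy $\approx 0$ between the two perturbed orbits. These stay in a small neighborhood of the orbit family, so by the energy estimate (as in the proof of \Cref{prop:product}) they have no negative punctures at $Y_-$. Their count therefore equals the unaugmented count, which vanishes by the Morse--Bott analysis of \cite{BO_Duke}, since $\hat{\gamma}_{\min}$ is good and the two contributions cancel.

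Next we compute $\delta_{\partial_+}(\check{\gamma}_{\min})$ using the direct description via $\cM^{\partial_+}_{x,p,\Gamma}$ in \eqref{eqn:partial_+}, with $p$ the minimum of a Morse function $h_+$ on $\bbS^{2n+1}$. We stretch the neck along a copy of $\bbS^{2n+1}$ slightly inside the collar, with the Hamiltonian $H=0$ below it. A curve with a positive end at $\check{\gamma}_{\min}$ and a point constraint on $\bbS^{2n+1}$ then breaks into a top level and lower levels. The top level lies in the cylindrical end $[1,\infty)\times\bbS^{2n+1}$, where it is a Floer cylinder with one negative Reeb asymptotic, a holomorphic plane in the symplectization, or a cascade carrying the point constraint. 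The lower levels lie in the symplectization of $\bbS^{2n+1}$, in $\widehat W$, and in $Y_-$, the last of these being weighted by $\epsilon$.

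The action/period bound does most of the work here. The top-level positive asymptotic has action just above $4\pi$, so any negative Reeb asymptotic of the top level at the neck must be $\gamma_{\min}$ itself or have smaller period. Since no Reeb orbit has smaller period, either the curve avoids the neck entirely, or it breaks as a trivial cylinder over $\gamma_{\min}$ followed by a rigid plane in $W\cup_{\bbS^{2n+1}}$ (lower levels) asymptotic to $\gamma_{\min}$. The latter has index $\mu_{\CZ}(\gamma_{\min})+n-2=2n>0$ and is not rigid, so for dimensional reasons it cannot contribute to a zero-dimensional count. Hence after stretching, every contributing configuration lives in the upper region $[1-\eta,\infty)\times\bbS^{2n+1}$, which coincides with the corresponding region of the ball filling $\B^2(3)\times\B^{2n-2}(3)\times\B^2(2)$. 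These configurations have no $Y_-$ punctures at all, so the augmented count equals the unaugmented count for the ball. By \cite[Proposition 2.11]{Zhou:RP2n-1}, or equivalently \eqref{eq:SHCompositionCoversUnit}, that count sends $\check{\gamma}_{\min}$ to the minimum $p$, i.e.\ to $\pm1\in H^0(\bbS^{2n+1})$.

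The main obstacle is making this neck-stretching compatible with the virtual perturbation scheme defining $\epsilon$. The augmented moduli spaces involve virtual counts, and we must know that for a sufficiently stretched neck the virtual count agrees with the geometric count of the transversely cut-out configurations in the upper region. Moreover, the excluded buildings, which contain non-rigid planes asymptotic to $\gamma_{\min}$, must contribute nothing even virtually. I would handle this by choosing perturbation data compatible with the SFT splitting along the neck, and then invoking the compatibility property of \Cref{rmk:ana}(4) (e.g.\ \cite[Proposition 4.33]{Pardon:Contact}). Exclusion of the non-rigid strata then follows from virtual dimension counting together with the energy and action bounds above.
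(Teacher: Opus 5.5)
Your overall route matches the paper's. You stretch along $\bbS^{2n+1}$, use the period bound so that $\gamma_{\min}$ is the only possible neck asymptotic, and identify what survives with the computation in the rounded $\B^2(3)\times\B^{2n-2}(3)\times\B^2(2)$. Your energy argument that $\check{\gamma}_{\min}$ is a cocycle is also fine.

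The gap is in how you discard the building through $\gamma_{\min}$. Its lower level is not a bare plane of index $2n$. It is a plane in $\widehat W$ asymptotic to $\gamma_{\min}$, with additional negative punctures $\gamma_1,\dots,\gamma_m$ at $Y_-$ weighted by $\epsilon$. Its contribution is therefore the value on $q_{\gamma_{\min}}$ of the augmentation of $\CC(\bbS^{2n+1},\alpha)$ obtained by pulling $\epsilon$ back through $W$. Such a plane has index $2n-\sum_i|q_{\gamma_i}|$; for a general $Y_-$ and $W$ a $\Z$-valued index need not even be defined. Moreover, $\epsilon$ is only $\Z/2\Z$-graded, which is exactly the case needed in the proof of \Cref{thm:vanishingCH} (cf.\ the remark after \Cref{Lemma:aug}). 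So $\epsilon$ may be non-zero on even generators of non-zero degree, and these lower levels can be rigid with non-zero weighted count. Since $|q_{\gamma_{\min}}|=2n$ is even, nothing forces this contribution to vanish. Your ``not rigid for dimensional reasons'' only works for $\Z$-graded augmentations. You also never say where the point constraint at $p$ sits in this building.

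The fix is to put the index argument on the top level. This is what the paper's appeal to the action and index estimates of \cite[\S 4.2.2]{BGMZ} amounts to.

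\begin{itemize}
\item The top level lives entirely in the Hamiltonian-perturbed symplectization of $\bbS^{2n+1}$, where gradings are $\Z$-valued and no $Y_-$ punctures occur.
\item It must carry the constraint at $p$, which lies above the neck, and it has a negative end at $\gamma_{\min}$.
\item Its virtual dimension is therefore $|p|-|\check{\gamma}_{\min}|-1-|q_{\gamma_{\min}}|=0+1-1-2n=-2n<0$.
\end{itemize}

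For generic $J$ it is cut out transversely, since its positive end is at the orbit $\check{\gamma}_{\min}$ over the simple Reeb orbit $\gamma_{\min}$, so it is empty. Alternatively, its contact area is essentially zero, so it is close to a trivial half-cylinder over $\gamma_{\min}$ and misses a generic $p$. Either way, no curve in the limiting building has a negative end at the neck, whatever $\epsilon$ is. Only $\Gamma=\emptyset$ contributes, and your identification with the ball computation then goes through.
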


\begin{proof}
The proof follows the strategies of \cite[Theorem 1.1]{product} and \cite[Proposition 2.11]{Zhou:RP2n-1}, with the negative ends of holomorphic curves algebraically capped off by the augmentation \(\epsilon\).

More concretely, consider the generator \(\check{\gamma}_{\min}\) associated to the shortest Reeb orbit \(\gamma_{\min}\) of the contact form $\alpha$, following the notation of \cite{product}. The holomorphic curves whose counts define the map \(\delta_{\partial_+}\) are in the moduli spaces \(\cM^{\partial_+}_{\check{\gamma}_{\min},p,\Gamma}\), where \(p\) represents the unit in \(H^0(\bbS^{2n+1})\). The relevant curves for the differential of \(\check{\gamma}_{\min}\) lie in the moduli spaces \(\cM_{\check{\gamma}_{\min},y,\Gamma}\).

We shall apply a neck-stretching along the positive boundary of $W$, i.e. $(\bbS^{2n+1},\xi_{\std})$. By the same action and index estimates used in \cite[\S 4.2.2]{BGMZ}, holomorphic curves appearing in the SFT buildings associated to such a degeneration cannot be negatively asymptotic to Reeb orbits in $(\bbS^{2n+1},\xi_{\std})$. As usual, we exclude trivial cylinders from such buildings. In particular, \(\cM_{\check{\gamma}_{\min},y,\Gamma}\) and \(\cM^{\partial_+}_{\check{\gamma}_{\min},p,\Gamma}\) are empty whenever $\Gamma \ne \emptyset$. Elements of these $\Gamma =\emptyset$ moduli spaces can be identified with the corresponding curves in the Liouville filling obtained by rounding the corners of
\[
\B^2(3)\times \B^{2n-2}(3)\times \B^2(2),
\]
described in the construction of $\alpha$. Using this filling, the
map $\delta_{\partial_{+}}$ hits \(1\in H^0(\bbS^{2n+1})\) by \cite[Proposition 3.2 and proof of Theorem 1.1]{product}. Hence the same is true for \(SH^{*,<4\pi+\delta}_+(W,\epsilon)\).
\end{proof}

Let \(W_\Lambda\) be the Liouville cobordism from
\[
(\bbS^{2n+1}_{+1}(\Lambda),\xi_{+1}(\Lambda))
\]
to \((\bbS^{2n+1},\xi_{\std})\) determined by contact $(+1)$-surgery along $\Lambda$. Choose the Liouville form so that its restriction to the positive boundary is the contact form $\alpha$ of \Cref{prop:contact}. Let \(L\subset W_\Lambda\) be the exact Lagrangian disk with positive Legendrian boundary \(\Lambda\) naturally determined by the surgery.

\begin{proof}[Proof of \Cref{thm:vanishingCH}]
Suppose, by contradiction, that
\((\bbS^{2n+1}_{+1}(\Lambda),\xi_{+1}(\Lambda))\) has non-zero contact homology.
By \Cref{Lemma:aug}, its chain-level contact homology CDGA admits a \(\Z/2\Z\)-graded, $\C$-valued augmentation, \(\epsilon\).

By \Cref{prop:sphere}, the map
\[
\delta_{\partial_+}:
SH^{*,<4\pi+\delta}_+(W_\Lambda,\epsilon)
\longrightarrow H^{*+1}(\bbS^{2n+1})
\]
has image containing \(1\in H^0(\bbS^{2n+1})\). Hence, by exactness of the sequence of Equation \eqref{eq:AugmentedSHLES} for \(SH^*(W_\Lambda,\epsilon)\), there exists a
class
\[
1+x \in H^{\mathrm{even}}(W_\Lambda;\C),
\qquad
x \in \bigoplus_{i>0}H^{2i}(W_\Lambda;\C),
\]
whose image in \(SH^{*,<4\pi+\delta}(W_\Lambda,\epsilon)\) vanishes.

Applying the closed-open map and using \Cref{prop:CO}, the restriction
\[
(1+x)|_L\in H^*(L;\C)
\]
maps to zero in \(HW^{*,<4\pi+\delta}(L,\epsilon)\). Since \(L\) is a disk,
\((1+x)|_L=1\in H^0(L;\C)\).

According to \Cref{prop:contact}, there are no Reeb chords of \(\Lambda\) with action in \([1,4\pi+\delta)\). Therefore
\[
HW^{*,<4\pi+\delta}(L,\epsilon)=HW^{*,<1}(L,\epsilon).
\]
Moreover, while the product a priori lands in \(HW^{*,<2}(L,\epsilon)\), we may identify \(HW^{*,<2}(L,\epsilon)\) with \(HW^{*,<1}(L,\epsilon)\) by the same action considerations. Indeed, constants $a=b=1$ appearing in \Cref{prop:product} are clearly applicable when using the contact form $\alpha$ of \Cref{prop:contact}. Thus \(HW^{*,<1}(L,\epsilon)\) inherits a unital ring structure. Since its unit is zero, we conclude that
\[
HW^{*,<1}(L,\epsilon)=0.
\]

The tautological exact sequence of Equation \eqref{eq:AugmentedWFLES} then gives
\[
\chi\bigl(HW^{*,<1}_+(L,\epsilon)\bigr)=-\chi(L)=-1.
\]
On the other hand, by \Cref{prop:Euler1},
\[
\chi\bigl(HW^{*,<1}_+(L,\epsilon)\bigr)
=
(-1)^{\frac{n(n-1)}{2}}\tb(\Lambda).
\]
Therefore
\[
\tb(\Lambda)=(-1)^{\frac{n(n-1)}{2}+1},
\]
contradicting the hypothesis. Hence the contact homology of
\((\bbS^{2n+1}_{+1}(\Lambda),\xi_{+1}(\Lambda))\) vanishes.
\end{proof}

\subsection{Proof of the three-dimensional case}

\begin{proof}[Proof of \Cref{cor:3D}]
For \(k=1\), algebraic overtwistedness of $(\bbS^{3}_{+1/k}(\Lambda),\xi_{+1/k}(\Lambda))$ follows immediately from \Cref{thm:vanishingCH}, since in dimension \(3\) the contact homology can only be non-zero when $\tb(\Lambda)=-1$.

For general \(k\ge 1\), use the standard Ding--Geiges description of contact
\((+1/k)\)-surgery as contact \((+1)\)-surgeries on $k$ copies of $\Lambda$, given by Legendrian push-offs \cite{DingGeiges:consur}. Thus there is a Liouville cobordism $W$ from $(\bbS^{3}_{+1/k}(\Lambda),\xi_{+1/k}(\Lambda))$ to $(\bbS^{3}_{+1}(\Lambda),\xi_{+1}(\Lambda))$. Since the positive boundary of $W$ has vanishing contact homology, the negative boundary of $W$ must have vanishing contact homology as well, by functoriality.

Tightness of the $(\bbS^{3}_{+1/k}(\Lambda),\xi_{+1/k}(\Lambda))$ follows from \cite{Golla,LS}: under the hypothesis
\[
\tb(\Lambda)=2g_s(\Lambda)-1,
\]
the contact Ozsv\'ath--Szab\'o invariant of \((\bbS^3_{+1/k}(\Lambda),\xi_{+1/k}(\Lambda))\) is non-zero for every positive integer \(k\), and the non-vanishing of this invariant implies tightness.

Finally, suppose that \(\Lambda\) bounds an exact, orientable Lagrangian surface \(L\) that
is not a disk. By Chantraine's formula \cite{ChantraineConcordance},
\[
\tb(\Lambda)=-\chi(L)=2g(L)-1.
\]
Since \(L\) is a smooth slice surface, \(g_s(\Lambda)\le g(L)\). On the other
hand, the slice--Bennequin inequality gives
\[
\tb(\Lambda)\le 2g_s(\Lambda)-1.
\]
See, for example, \cite{Et:LegenTranKnots}. Thus \(g(L)=g_s(\Lambda)\), and hence
\[
\tb(\Lambda)=2g_s(\Lambda)-1.
\]
Since \(L\) is not a disk, \(g(L)>0\), so \(\tb(\Lambda)\ne -1\). Therefore the first part of the corollary applies.
\end{proof}

\subsection{Adaptations for the ADC case and proof of \Cref{thm:vanishingCH1}}

Let $W$ be a symplectic filling such that
\begin{enumerate}
\item the natural map \(\pi_1(\partial W)\to \pi_1(W)\) is injective,
\item \(c_1(W)=0\), and
\item $\partial W$ is ADC.
\end{enumerate}
Then by \cite[Proposition 3.8]{Lazarev:flexiblefillings}, the positive symplectic cohomology \(SH^*_+(W)\) is independent of the filling \(W\). Analogous invariance results for ADC Legendrians are proved in \cite[Proposition 7.8]{Lazarev:flexiblefillings}. We will use the following augmented version of the latter result.

\begin{proposition}\label{prop:ind}
Let \((Y_+,\Lambda,\xi_+)\) be an ADC pair such that
\[
\pi_1(Y_+)=1,
\qquad
\pi_1(Y_+,\Lambda)=1.
\]
Let \(W\) be a Liouville cobordism from \(Y_-\) to \(Y_+\) with \(c_1(W)=0\). Let
\(L\subset W\) be an exact spin Lagrangian filling of \(\Lambda\) with vanishing
Maslov class. Finally, let \(\epsilon\) be a \(\Z\)-graded augmentation of the
contact homology algebra of \(Y_-\), with respect to a global trivialization of
\(\det_{\C}TW\). Then \(HW^*_+(L,\epsilon)\), as a \(\Z\)-graded vector space,
is independent of the choices of \(W\), \(L\), and \(\epsilon\).
\end{proposition}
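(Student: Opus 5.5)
The plan is to adapt Lazarev's invariance argument \cite[Proposition 7.8]{Lazarev:flexiblefillings} to the augmented setting of \S\ref{s4}. The key point is that, under the ADC hypothesis, all moduli spaces defining the positive differential can be confined to the cylindrical end near \(Y_+\), so that they see neither \(W\), nor \(L\), nor \(\epsilon\).

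First fix the sequence of contact forms \(\alpha_1\ge\alpha_2\ge\cdots\) and thresholds \(D_i\) witnessing the ADC property of \((Y_+,\Lambda,\xi_+)\). By shrinking \(W\) along the Liouville flow, and using that \(\alpha_i\le\alpha_1\), we may assume that \(\widehat W\) contains the region between the graphs of \(\alpha_i\) and \(\alpha_1\) in the symplectization of \(Y_+\). Compute \(HW^{*,<D_i}_+(L,\epsilon)\) using the form \(\alpha_i\) on the positive end. Since \(\pi_1(Y_+,\Lambda)=1\), every Reeb chord is in the trivial homotopy class, so all chords of action \(\le D_i\) have \(|c|_{\SFT}>0\). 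Likewise, since \(\pi_1(Y_+)=1\), all Reeb orbits of period \(\le D_i\) have positive SFT degree. The hypotheses \(c_1(W)=0\), vanishing Maslov class, and \(\Z\)-gradedness of \(\epsilon\) make these degrees well defined and compatible with the \(\Z\)-grading on the augmented complexes.

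Next, perform a neck-stretching along a copy of \(Y_+\) (with form \(\alpha_i\)) in \(\widehat W\), applied to the curves in \(\cM_{x,y,\Gamma}\) of virtual dimension \(0\) and \(1\), where \(x,y\) are non-constant chords of action \(<D_i\). In the limit building, the top level lies in the symplectization of \(Y_+\) together with \(\widehat L\)'s cylindrical end. Any lower-level component lies in \(\widehat W\) (with negative punctures at \(Y_-\) weighted by \(\epsilon\)), and is asymptotic at its positive ends to Reeb orbits or chords of \(Y_+\) of action \(<D_i\) by energy.

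The index argument of \cite[\S 4.2.2]{BGMZ} and \cite[Proposition 3.8, 7.8]{Lazarev:flexiblefillings} should then show that such configurations cannot be rigid or one-dimensional. The top level would have to carry extra negative ends at orbits or chords of positive SFT degree, and this increases the dimension beyond what the total index allows. \textbf{This is the main obstacle.} One must check that lower-level pieces in \(W\), including those capped by \(\epsilon\), contribute nonnegatively to the index. Since \(\epsilon\) is \(\Z\)-graded, it is nonzero only on generators of degree \(0\). So a capped lower piece behaves, for index purposes, like a curve in the positive end whose negative asymptotics have been erased, exactly as for an honest filling. I would first try to carry out this bookkeeping with \(\Z\)-gradings, treating \(\epsilon(q^\Gamma)\) as forcing the total degree of \(\Gamma\) to vanish.

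Consequently, after stretching, the differential on \(C_+^{<D_i}(L,H,\epsilon)\) counts only curves in the symplectization of \((Y_+,\alpha_i)\) with boundary on \(\R\times\Lambda\). This yields an identification of \(HW^{*,<D_i}_+(L,\epsilon)\) independent of \(W\), \(L\) and \(\epsilon\). The same stretching applied to continuation maps between \(\alpha_i\) and \(\alpha_{i+1}\) shows that these identifications commute with the maps in the direct system. Passing to the direct limit as \(i\to\infty\) gives the claimed independence of \(HW^*_+(L,\epsilon)\) as a \(\Z\)-graded vector space.
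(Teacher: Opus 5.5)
Your proposal follows the paper's sketch essentially step for step. The shared steps are: realizing the ADC forms as graphs in the collar; using the $\pi_1$ hypotheses to place all orbits and chords in the classes where ADC positivity applies; using $\Z$-gradedness of $\epsilon$ to force the $Y_-$-punctures to have degree zero, so that capped lower pieces have dimension equal to the strictly positive SFT degree; and concluding that the rigid moduli spaces for the differential and continuation maps lie entirely in the symplectization of $Y_+$.

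Two small corrections. You need strict positivity of that dimension, not the mere nonnegativity you wrote. Also, as in the paper, only the rigid moduli spaces need to be confined: your claim that one-dimensional configurations are excluded as well is not needed, and the index count does not give it when some orbit or chord has SFT degree exactly one.
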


\begin{proof}[Sketch of proof]
We may apply Lazarev's neck-stretching argument adapted to the augmented setting. Choose the sequence of contact forms and action thresholds from the ADC condition in \Cref{def:ADC_pair}, meaning that the SFT degrees of orbits and chords (up to action thresholds tending to infinity) are positive. Since \(W\) contains the negative symplectization end of \(Y_+\), these contact forms can be realized on contact hypersurfaces in \(W\), given as graphs in the collar end of the cobordism. 

Consider the moduli spaces \(\cM_{x,y,\Gamma}\) of Equation \eqref{eqn:wrapped_aug} that define the differential on \(HW^*_+(L,\epsilon)\). After stretching the neck along an ADC hypersurface, any curve entering the lower cobordism must break along closed Reeb orbits or Reeb chords of \((Y_+,\Lambda)\). The assumptions \(\pi_1(Y_+)=1\) and \(\pi_1(Y_+,\Lambda)=1\) ensure that the relevant orbits and chords lie in the homotopy classes to which the ADC positivity condition applies. Since the augmentation is \(\Z\)-graded, only degree-zero negative punctures at \(Y_-\) can contribute after weighting by \(\epsilon\). The ADC index positivity then rules out lower-level contributions to the rigid moduli spaces that define the differential and continuation maps.

Consequently, in the direct-limit complex computing \(HW^*_+(L,\epsilon)\), the differentials and continuation maps are computed entirely in the symplectization of \(Y_+\). Thus the resulting \(\Z\)-graded vector space depends only on the ADC pair \((Y_+,\Lambda,\xi_+)\), and not on the cobordism, the Lagrangian filling, or the augmentation.
\end{proof}

\begin{proof}[Proof of \Cref{thm:vanishingCH1}]
Since \(n\ge 2\), the surgery cobordism \(W_\Lambda\) is obtained by attaching an index $n+1$ handle to $\bbS^{2n+1}$ and therefore has \(c_1(W_\Lambda)=0\). Since $W_\Lambda$ and its boundary components all have vanishing first cohomology group, \(\det_{\C}\xi_{+1}(\Lambda)\) has a homotopically unique trivialization, yielding canonical $\Z$-valued gradings on chain-level contact homology algebras of the positive and negative boundaries of $W_\Lambda$.

We argue by contradiction, assuming that the chain-level contact homology algebra of
\[
(\bbS^{2n+1}_{+1}(\Lambda),\xi_{+1}(\Lambda))
\]
admits a \(\Z\)-graded augmentation \(\epsilon\) over \(\C\). Let \(L_0\subset W_\Lambda\) be the Lagrangian disk with positive boundary \(\Lambda\). The proof of \Cref{thm:vanishingCH} carries over without modification to show that the image of
\[
1\in H^0(L_0;\C)
\]
vanishes in \(HW^{*,<4\pi+\delta}(L_0,\epsilon)\). Hence the unit vanishes in the direct limit \(HW^*(L_0,\epsilon)\). Since \(HW^*(L_0,\epsilon)\) is a unital ring, it follows that
\[
HW^*(L_0,\epsilon)=0.
\]
The tautological exact sequence of Equation \eqref{eq:AugmentedWFLES} therefore yields
\[
HW^*_+(L_0,\epsilon)\cong H^*(L_0;\C)[1]\cong \C[1].
\]

On the other hand, consider the given exact Lagrangian filling
\[
L\subset(\D^{2n+2},\omega_{\std})
\]
of \(\Lambda\). Its wrapped Floer cohomology vanishes, since the symplectic cohomology of $\D^{2n+2}$  vanishes. Therefore the tautological exact sequence gives
\[
HW^*_+(L)\cong H^*(L;\C)[1].
\]
By assumption \(H^*(L;\C)\ne \C\), contradicting \Cref{prop:ind} applied to the ADC pair \((\bbS^{2n+1},\Lambda,\xi_{\std})\). Therefore no \(\Z\)-graded augmentation \(\epsilon\) can exist.
\end{proof}

\bibliographystyle{alpha}
\bibliography{ref}
\subsection*{AI declaration} LLMs were used for proofreading this work. All the rest was done by the authors.
\Addresses

\end{document}